\documentclass[a4paper]{amsart}
\usepackage[utf8]{inputenc}
\usepackage{mathtools}
\usepackage{amssymb,amsfonts,amsmath}
\usepackage{enumitem}
\usepackage{color}
\usepackage{mathrsfs}
\usepackage{graphicx}
\usepackage{verbatim}
\usepackage{tikz-cd}
\usepackage{tikz}
\usetikzlibrary{arrows.meta,positioning,calc,decorations.pathmorphing}
\usepackage{url}
\usepackage{mathabx}
\usepackage[dvipsnames]{xcolor}
\usepackage{aliascnt}
\usepackage[capitalise,nameinlink,noabbrev]{cleveref}
\numberwithin{equation}{section}
\newtheorem{theorem}{Theorem}[section]
\newaliascnt{corollary}{theorem}
\newtheorem{corollary}[corollary]{Corollary}
\aliascntresetthe{corollary}
\newaliascnt{lemma}{theorem}
\newtheorem{lemma}[lemma]{Lemma}
\aliascntresetthe{lemma}
\newaliascnt{goal}{theorem}

\aliascntresetthe{goal}
\newaliascnt{proposition}{theorem}
\newtheorem{proposition}[proposition]{Proposition}
\aliascntresetthe{proposition}
\newtheorem*{question}{Question}
\newaliascnt{problem}{theorem}

\aliascntresetthe{problem}
\newaliascnt{assumption}{theorem}

\aliascntresetthe{assumption}
\newaliascnt{conjecture}{theorem}

\aliascntresetthe{conjecture}
\newtheorem{introtheorem}{Theorem}

\theoremstyle{definition}

\theoremstyle{definition}
\newaliascnt{definition}{theorem}
\newtheorem{definition}[definition]{Definition}
\aliascntresetthe{definition}
\newaliascnt{notation}{theorem}
\newtheorem{notation}[notation]{Notation}
\aliascntresetthe{notation}
\newaliascnt{convention}{theorem}
\newtheorem{convention}[convention]{Convention}
\aliascntresetthe{convention}
\newaliascnt{construction}{theorem}

\aliascntresetthe{construction}
\theoremstyle{remark}
\newaliascnt{remark}{theorem}
\newtheorem{remark}[remark]{Remark}
\aliascntresetthe{remark}
\newaliascnt{remarks}{theorem}

\aliascntresetthe{remarks}
\newaliascnt{example}{theorem}

\aliascntresetthe{example}
\crefname{theorem}{Theorem}{Theorems}
\crefname{corollary}{Corollary}{Corollaries}
\crefname{lemma}{Lemma}{Lemmas}
\crefname{goal}{Goal}{Goals}
\crefname{proposition}{Proposition}{Propositions}
\crefname{question}{Question}{Questions}
\crefname{problem}{Problem}{Problems}
\crefname{assumption}{Assumption}{Assumptions}
\crefname{conjecture}{Conjecture}{Conjectures}
\crefname{definition}{Definition}{Definitions}
\crefname{notation}{Notation}{Notations}
\crefname{convention}{Convention}{Conventions}
\crefname{construction}{Construction}{Constructions}
\crefname{remark}{Remark}{Remarks}
\crefname{remarks}{Remarks}{Remarks}
\crefname{example}{Example}{Examples}
\Crefname{theorem}{Theorem}{Theorems}
\Crefname{corollary}{Corollary}{Corollaries}
\Crefname{lemma}{Lemma}{Lemmas}
\Crefname{goal}{Goal}{Goals}
\Crefname{proposition}{Proposition}{Propositions}
\Crefname{question}{Question}{Questions}
\Crefname{problem}{Problem}{Problems}
\Crefname{assumption}{Assumption}{Assumptions}
\Crefname{conjecture}{Conjecture}{Conjectures}
\Crefname{definition}{Definition}{Definitions}
\Crefname{notation}{Notation}{Notations}
\Crefname{convention}{Convention}{Conventions}
\Crefname{construction}{Construction}{Constructions}
\Crefname{remark}{Remark}{Remarks}
\Crefname{remarks}{Remarks}{Remarks}
\Crefname{example}{Example}{Examples}

\newcommand{\N}{\mathbb{N}}

\newcommand{\Z}{\mathbb{Z}}

\newcommand{\CC}{\mathfrak{C}}
\newcommand{\OO}{\mathcal{O}}

\DeclareMathOperator{\Aut}{Aut}

\DeclareMathOperator{\CAT}{CAT}
\DeclareMathOperator{\Cay}{Cay}

\DeclareMathOperator{\EL}{EL}
\DeclareMathOperator{\F}{F}

\DeclareMathOperator{\GL}{GL}

\DeclareMathOperator{\HNN}{HNN}

\DeclareMathOperator{\Homeo}{Homeo}

\DeclareMathOperator{\pr}{pr}

\DeclareMathOperator{\RiSt}{RiSt}

\DeclareMathOperator{\St}{St}
\DeclareMathOperator{\supp}{supp}
\DeclareMathOperator{\Sym}{Sym}
\DeclareMathOperator{\T}{T}

\newcommand{\abs}[1]{\vert #1 \vert}
\newcommand\Set[2]{\{\,#1\mid#2\,\}}

\newcommand{\defeq}{\mathrel{\mathop{:}}=}

\renewcommand{\epsilon}{\varepsilon}

\title[Non-uniform exponential growth]{Non-uniform exponential growth and the decay of growth rates in growing dimensions}
\author[R. Sauer]{Roman Sauer}
\address{Karlsruhe Institute of Technology, Englerstr.\ 2, 76131 Karlsruhe, Germany}
\email{roman.sauer@kit.edu}
\author[E. Schesler]{Eduard Schesler}
\address{Karlsruhe Institute of Technology, Englerstr.\ 2, 76131 Karlsruhe, Germany}
\email{eduardschesler@googlemail.com}
\subjclass[2010]{Primary 20F65; Secondary 20E08, 20E32, 20F69}
\begin{document}
\begin{abstract}
We provide the first example of a finitely presented, and the first example of a simple, group of non-uniform exponential growth.
The example is given by Thompson's group $V$.
Our methods also show that the infimal exponential growth rates of $\Aut(F_{2^{n+2}})$ and of $\EL_{2^{n+2}}(R)$, for every finitely generated ring $R$, tend to $1$.
As an application, we obtain the first example of an acylindrically hyperbolic group, and the first example of a Kazhdan group, of non-uniform exponential growth.
\end{abstract}
\maketitle

\section{Introduction}

One of the most natural invariants that can be attached to a finitely generated group $G$ with finite generating set $S$ is the \emph{(word) growth function} $\gamma_G^S$, which assigns to each $n \in \mathbb{N}$ the number $\gamma_G^S(n)$ of elements in $G$ that can be represented by
words of length at most $n$ over $S \cup S^{-1}$.
The systematic study of this invariant dates back to the foundational works of Efremovich~\cite{Efremovich1953}, {\v S}varc~\cite{Svarc1955} and Milnor~\cite{Milnor68} in the 1950s and 1960s, and has remained a central theme of group theory ever since.
For a recent historical survey on the word growth of groups we refer to~\cite{delaHarpe25}.
A celebrated result in this area is Gromov's characterization of finitely generated groups of polynomial growth as virtually nilpotent groups~\cite{Gromov1981}.
In the special case of solvable groups, this characterization had been
established earlier by Milnor~\cite{Milnor1968b} and Wolf~\cite{Wolf1968},
who showed that a finitely generated solvable group has polynomial growth
if it is virtually nilpotent, and exponential growth otherwise.
In particular, no finitely generated solvable group has growth strictly between polynomial and exponential.
This led Milnor~\cite{Milnor68c} to raise the question of whether every finitely generated group has either polynomial or exponential growth.
This question was answered in the negative by Grigorchuk~\cite{Grigorchuk84}, who introduced a group $\mathcal{G}$, now known as the first Grigorchuk group, whose growth function $\gamma_{\mathcal{G}}^S$ satisfies
\[
  e^{\sqrt{n}} \preceq \gamma_{\mathcal{G}}^S(n) \preceq e^{n^{\alpha}}
  \qquad\text{for some } 1/2 < \alpha < 1,
\]
where $\preceq$ refers to the standard preorder on growth functions, given by $f \preceq g$ if there exists a constant $C > 0$ such that $f(n) \leq C \cdot g(Cn)$ for all $n \in \mathbb{N}$.
The optimal exponent in the upper bound was later determined by Bartholdi~\cite{Bartholdi98}.
Setting
\[
\alpha_0 \defeq \log(2)/\log(2/\eta) \approx 0.7674,
\]
where $\eta$ denotes the unique real root of the polynomial $x^3+x^2+x-2$, he showed that $\gamma_{\mathcal{G}}^S(n) \preceq e^{n^{\alpha_0}}$, while a more recent result of Erschler and Zheng~\cite{ErschlerZheng20} shows that $e^{n^{\beta}} \preceq \gamma_{\mathcal{G}}^S(n)$ for every $\beta < \alpha_0$.
The constant $\alpha_0$ will appear throughout our quantitative results.
To this day it remains a major open problem in group theory, known as Grigorchuk's \emph{gap conjecture}~\cite{Grigorchuk2014}, whether every finitely generated group whose growth function is strictly slower than $e^{\sqrt{n}}$ is virtually nilpotent.
To every growth function $\gamma_G^S$ one can associate the limit
\[
\omega(G,S) = \lim_{n\to\infty} \sqrt[n]{\gamma_G^S(n)},
\]
called the \emph{exponential growth rate} of $G$ with respect to $S$, whose existence was established by Milnor~\cite{Milnor68c}.
Note that $G$ has \emph{exponential growth}, that is, $\gamma_G^S$ is bounded below by $C^n$ for some constant $C > 1$, if and only if $\omega(G,S) > 1$ for one, equivalently every, finite generating set $S$.
As the notation suggests, in this case the rate $\omega(G,S)$ depends on $S$.
Indeed, by enlarging the generating set, $\omega(G,S)$ can be made arbitrarily large.
However, it is far more subtle to control how small the rates $\omega(G,S)$ can become as the generating set varies.
This motivates the study of the infimum
\[
\omega(G) := \inf_S \omega(G,S),
\]
taken over all finite generating sets of $G$.
This quantity, the \emph{infimal exponential growth rate} of $G$, is a genuine invariant of the group.
One says that $G$ has \emph{uniform exponential growth} if $\omega(G) > 1$.
Otherwise, if $G$ has exponential growth but $\omega(G) = 1$, we say that $G$ has \emph{non-uniform exponential growth}.
The distinction between exponential growth and uniform exponential growth goes back to a question of Gromov~\cite{Gromov1981}, who asked whether every finitely generated group of exponential growth has uniform exponential growth.
This question became highly influential and motivated a large body of work in geometric group theory.
For many natural classes of groups it has been shown that Gromov's question has an affirmative answer.
These include linear groups~\cite{EskinMozesOh2005,BreuillardGelander2008}, elementary amenable groups~\cite{Osin03,Breuillard07}, and an expanding list of groups that act on hyperbolic spaces~\cite{Koubi1998,Xie07,AbbottNgSpriano24,LegaspiSteenbock2026} and low-dimensional $\CAT(0)$ cube complexes~\cite{BucherdelaHarpe00,KarSageev19,GuptaJankiewiczNg23}.
Thus, historically, uniform exponential growth appeared to be a robust feature among the known groups of exponential growth.
However, Gromov's question was ultimately shown to have a negative answer by
Wilson~\cite{Wilson04a}, whose examples come, like the first Grigorchuk group, from the class of branch groups.
As with all known examples of branch groups, Wilson's groups are not finitely presented.
This gives rise to the natural question, first raised by Mann~\cite{Mann11}, of whether finite presentability is compatible with non-uniform exponential growth.

\begin{question}
Does there exist a finitely presented group of non-uniform exponential growth?
\end{question}

An interesting perspective on groups of non-uniform exponential growth is offered by the class of just-infinite groups.
Recall that a group is \emph{just-infinite} if it is infinite and every proper quotient is finite.
By a standard application of Zorn's lemma, every finitely generated infinite group admits a just-infinite quotient.
Furthermore, the infimal exponential growth rate does not increase under taking quotients.
Consequently, if $G$ is of non-uniform exponential growth, then every just-infinite quotient of $G$ is either of non-uniform exponential growth or of subexponential growth.
This suggests that the class of just-infinite groups has a higher density of groups of non-uniform exponential growth than the class of all finitely generated groups, and hence provides a suitable class in which to search for them.
By results of Wilson~\cite{Wilson71} and Grigorchuk~\cite{Grigorchuk00}, the study of finitely generated just-infinite groups reduces to the study of three subclasses: branch groups, hereditarily just-infinite groups, and simple groups.
This raises the question of which of these subclasses contain groups of non-uniform exponential growth.
Wilson's original construction~\cite{Wilson04a} of groups of non-uniform exponential growth provides examples in the class of branch groups, which are in particular residually finite.
It therefore remains to answer the following questions. The first one appears, for example, in Bartholdi's lecture notes~\cite{bartholdi-lectures} or in the Kourovka Notebook~\cite[21.117]{KourovkaNotebook26}.

\begin{question}
Does there exist a finitely generated simple group of non-uniform exponential growth?
\end{question}

\begin{question}
Does there exist a finitely generated hereditarily just-infinite group of non-uniform exponential growth?
\end{question}

The main result of this paper provides an affirmative answer to the first two questions above.
To our great surprise, the group we consider for this purpose does not need to be constructed.
It is Thompson's group $V$, one of the classical finitely presented infinite simple groups, introduced by Richard Thompson in the 1960s, long before Gromov raised his question.
In particular this shows that non-uniform exponential growth arises naturally.

\begin{introtheorem}\label{thm:main}
Thompson's group $V$ is of non-uniform exponential growth.
\end{introtheorem}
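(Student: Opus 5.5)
The plan has two parts. First, $V$ has exponential growth, because it contains a free subsemigroup (indeed it contains free groups, e.g.\ via Higman's embedding of $F_2$-type ping-pong elements). Second, for every $\epsilon>0$ we must exhibit a finite generating set $S_\epsilon$ of $V$ with $\omega(V,S_\epsilon)<1+\epsilon$.

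To produce such generating sets, I will exploit self-similarity. For every $m\ge 1$, $V$ is isomorphic to the Higman–Thompson type group acting on the disjoint union of $2^m$ Cantor sets. More usefully, the rigid stabiliser $\RiSt(C)$ of a cone $C$ of the Cantor set is itself isomorphic to $V$. Inside $V$ we also have a finite subgroup $\Sym(2^n)$ permuting the $2^n$ cones of depth $n$. The idea is to take a generating set of the form $S_n = A_n \cup \{\sigma_n\}$, where $A_n$ is a generating set of $V\cong \RiSt(C_0)$ supported in one depth-$n$ cone $C_0$, and $\sigma_n$ is a permutation of the depth-$n$ cones. The permutation $\sigma_n$ should be chosen (for instance as a long cycle or a pair generating a transitive group) so that conjugates of $A_n$ by powers of $\sigma_n$ spread over all cones and, together with transitions, generate $V$. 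Checking generation will use that $V$ is generated by elements supported on small cones together with cone permutations, which is a standard consequence of simplicity and the structure of $V$.

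The growth estimate then proceeds as follows. A word of length $\ell$ in $S_n$ decomposes as a sequence of $\sigma_n$-moves interleaved with letters from $A_n$. Tracking the action on the $2^n$ cones, each letter of $A_n$ only modifies the restriction to the current cone $C_0$. The element represented is therefore determined by a permutation of cones (from a group of bounded size, at most $(2^n)!$) together with a tuple of $2^n$ elements of $V$, one per cone, whose word lengths in $A_n$ sum to at most $\ell$. Using the Grigorchuk-type contraction idea, I aim for the bound
\[
\gamma^{S_n}(\ell)\le C_n\cdot \#\{(\ell_1,\dots,\ell_{2^n}):\textstyle\sum \ell_i\le \ell\}\cdot\prod_i \gamma^{A_n}(\ell_i).
\]
This is of little use if the $A_n$ are arbitrary. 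The actual mechanism must instead be a wreath-like recursion: moving between cones costs many $\sigma_n$-letters. If reaching cone $i$ costs about $2^n/$const steps, each cone's portion is short, and one obtains $\omega(V,S_n)\le \omega(V,A)^{c/2^n}\cdot(\text{entropy factor})$. The factor $\alpha_0$ in the paper suggests the authors instead compare with Grigorchuk-type subexponential pieces, bounding growth by $e^{\ell^{\alpha_0}}$-type terms on most of the ball.

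The main obstacle is precisely this middle step. The counting must not lose the exponential factor coming from the $A_n$-letters: naively the $A_n$-letters can be used at every step, which yields an exponential rate bounded away from $1$. I would first try choosing $A_n$ to be conjugates, by elements of large depth, of a fixed generating set. Then every generator other than $\sigma_n$ acts nontrivially only on a set of tiny measure, and one can show that most products collapse: a ping-pong-free, bounded-degree recursion reduces length by a factor at each level of the tree. Concretely, I would realise inside $V$ a copy of the Grigorchuk-type portrait recursion, with generators $a=\sigma$ (swap at the root) and $b,c,d$ acting by Grigorchuk-like recursions at depth $n$, augmented by one extra element that generates the rest of $V$. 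The goal is to prove the contraction inequality $|g|_{v}\le \frac{1}{2}(1+\delta_n)|g|+O(1)$ for sections, with $\delta_n\to 0$. This yields $\omega(V,S_n)\to 1$ by the standard Grigorchuk argument.
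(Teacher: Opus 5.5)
\textbf{There is a genuine gap.} The exponential-growth half is fine; the paper simply uses that $F\le V$ has exponential growth. The problem is the other half. Your proposal never supplies a mechanism that keeps the exponential growth of $V$ out of short balls, and the one concrete construction you describe fails for two independent reasons.

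First, since $A_n\subseteq S_n$ generates $\RiSt(C_0)\cong V$, you have $\gamma_V^{S_n}(\ell)\ge\gamma_{\RiSt(C_0)}^{A_n}(\ell)$, hence $\omega(V,S_n)\ge\omega(\RiSt(C_0),A_n)$. To make the left side close to $1$ you would already need generating sets of $V$ with rate close to $1$, which is circular. The cost of travelling between cones does not help, because a word may spend all of its letters in $A_n$.

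Second, each element of $A_n$ is supported in $C_0$, so it maps every depth-$n$ cone onto itself, while $\sigma_n$ permutes these cones. Thus $\langle S_n\rangle$ preserves the partition into depth-$n$ cones and is a proper subgroup of $V$. To generate $V$ you must include an element whose support meets two cones without permuting them; in the paper this is $X_0^{(n)}$ on $\CC_{(\rho_n,2)}\sqcup\CC_{(\rho_n,3)}$. Once you do, your ``one element of $V$ per cone'' bookkeeping is no longer available.

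The fallback plan does not repair this, for three reasons. Elements of $V$ do not act by automorphisms of the binary tree, so there are no sections in the sense you need. Your inequality $|g|_v\le\frac12(1+\delta_n)|g|+O(1)$ sums to $|g_0|+|g_1|\le(1+\delta_n)|g|+O(1)$, which is not a contraction, so the Grigorchuk recursion gives only the vacuous $\omega\le\omega^{(1+\delta_n)^k}$. Finally, the ``one extra element that generates the rest of $V$'' is exactly where the exponential growth lives, and nothing in your plan controls it.

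\textbf{What is missing.} Your instinct to let truncated Grigorchuk generators permute the fibres is what the paper does. However, the count is of lamplighter type, not a contraction argument, and it rests on two ideas you do not have.

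The first idea is that the only generators that do not merely permute Cantor copies, $X_0^{(n)}$ and $X_1^{(n)}$, sit over $\rho_n$ with disjoint supports, so they generate $\Z^2$. The non-commutativity needed to build $F$, $T$ and then $V$ on a fibre comes instead from $\Sym(4)_{\eta_n}$.

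The second idea is that $\eta_n$ is at distance $2^n-1$ from $\rho_n$ in $\Gamma_n$. Consequently, conjugates of $\Sym(4)_{\eta_n}$ by words of length $<r_n/2$ have supports disjoint from the corresponding conjugates of $\tau_n$, $X_0^{(n)}$ and $X_1^{(n)}$. This yields the normal form $p_1p_2p_3p_4$:
\begin{itemize}
\item $p_2$ and $p_4$ lie in the base groups of $\Sym(4)\wr_{L_n}\mathcal G_n$ and $\Z^2\wr_{L_n}\mathcal G_n$ and are supported on inverted orbits;
\item $p_3$ is a permutation of an inverted orbit of a modified word.
\end{itemize}

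Grigorchuk's group is essential here. Its inverted orbits have size $O(\ell^{\alpha_0})$ and there are only $\exp(O(\ell^{\alpha_0}))$ of them (Bartholdi--Erschler), and Bartholdi's bound controls $p_1$. With a long cycle as the moving element, balls of radius up to about $2^{n-1}$ would look like balls in the lamplighter group $\Z^2\wr\Z$, and the count would stay exponential with base bounded away from $1$.

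The resulting bound $\gamma_{W_n}^{T_n}(\ell)\le\exp(C\ell^{\alpha})$ holds only for $\ell<r_n/3$, but uniformly in $n$. That suffices because $\omega(W_n,T_n)\le\gamma_{W_n}^{T_n}(\ell)^{1/\ell}$ for every $\ell$, so no contraction property of $V$ itself is needed.
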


This result appears even more surprising when contrasted with the behaviour of Thompson's group $F$, which is closely related to $V$ but has uniform exponential growth, as shown by de la Harpe~\cite{delaHarpe2002}.
\Cref{thm:main} also contrasts with the aforementioned results on groups acting on low-dimensional $\CAT(0)$ cube complexes~\cite{BucherdelaHarpe00,KarSageev19,GuptaJankiewiczNg23}. Indeed, Farley showed that Thompson's group $V$ acts properly by isometries on an infinite-dimensional $\CAT(0)$ cube complex~\cite{Farley05}.
Thus the results concerning low dimensions do not extend to proper actions on infinite-dimensional $\CAT(0)$ cube complexes.
Although the techniques in our proof of \cref{thm:main} are flexible enough to apply to several relatives of Thompson's group $V$, they break down for Thompson's group $T$.
Since $T$ is simple, the argument by which de la Harpe~\cite{delaHarpe2002} established uniform exponential growth for Thompson's group $F$, namely realizing it as a non-ascending $\HNN$-extension of itself, is not available either.
This leaves us with the following question.

\begin{question}
Is Thompson's group $T$ of uniform exponential growth?
\end{question}

We now turn to groups acting on hyperbolic spaces, a setting in which the study of Gromov's question has a long history.
By results of Balasubramanya, Fournier-Facio and Genevois~\cite{BalasubramanyaFournierFacioGenevois25}, this class is essentially disjoint from the world of Thompson's groups in the sense that no isometric action of $V$, or of many of its relatives, on a hyperbolic space admits a loxodromic element.
The rich supply of affirmative results concerning Gromov's question in this setting started with the work of Koubi~\cite{Koubi1998}, who established uniform exponential growth for non-elementary hyperbolic groups.
Xie~\cite{Xie07} subsequently proved that relatively hyperbolic groups are either virtually cyclic or of uniform exponential growth.
For the even more general class of acylindrically hyperbolic groups, however, the corresponding question was already noted as open in the early days of this class~\cite{MinasyanOsin2018} and continues to attract interest~\cite{AbbottNgSpriano24,LegaspiSteenbock2026}.

\begin{question}
Does every acylindrically hyperbolic group of exponential growth have uniform exponential growth?
\end{question}

Prominent examples of acylindrically hyperbolic groups include mapping class groups~\cite{Bowditch08}, for which uniform exponential growth was established by Anderson, Aramayona and Shackleton~\cite{AndersonAramayonaShackleton07} and extended by Mangahas~\cite{Mangahas10} to all of their subgroups of exponential growth.
Another interesting family of acylindrically hyperbolic groups, which will play an important role in this article, is given by the automorphism groups $\Aut(F_n)$ of finitely generated free groups~\cite{GenevoisHorbez2020}, for which uniform exponential growth follows from the corresponding property for their quotients $\GL_n(\Z)$.
Recently, Abbott, Ng and Spriano~\cite{AbbottNgSpriano24} established uniform exponential growth for every virtually torsion-free acylindrically hyperbolic group that is hierarchically hyperbolic, thereby recovering several previously known results in a unified way.
Beyond its intrinsic interest, the question about the uniform exponential growth of acylindrically hyperbolic groups is closely related to the question of whether the infimal exponential growth rates of all non-elementary hyperbolic groups admit a common lower bound $c>1$, see~\cite[Question~2.1]{Osin04}.
Remarkably, each of these infimal rates is in fact a minimum, as was recently shown by Fujiwara and Sela~\cite{FujiwaraSela23}.
By a result of Minasyan and Osin~\cite{MinasyanOsin2018} there is a finitely generated acylindrically hyperbolic group that is a quotient of every non-elementary hyperbolic group.
Since exponential growth rates do not increase under passing to quotients, an affirmative answer to the question about the uniform exponential growth of acylindrically hyperbolic groups would imply the existence of a lower bound $c$ as above.
This strategy was suggested by Osin~\cite{Osin2002} to construct groups of non-uniform exponential growth even before Wilson's result and has continued to attract attention, see e.g.~\cite{LegaspiSteenbock2026}.
The following result rules out this strategy.
Moreover, it shows that non-uniform exponential growth can be combined with Kazhdan's property~$(\T)$, which answers a question of de la Harpe~\cite{delaHarpe2002}.

\begin{introtheorem}\label{thm:acylhyp}
There is an acylindrically hyperbolic group of non-uniform exponential growth with Kazhdan's property~$(\T)$.
\end{introtheorem}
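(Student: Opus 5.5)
The plan is to build the group as a product (or similar combination) of the groups $\Aut(F_{2^{n+2}})$ or $\EL_{2^{n+2}}(R)$, whose infimal growth rates tend to $1$ by the abstract's claim. The most natural candidate is a single group that surjects onto all of them, or that contains them in a way compatible with generating sets. Quotients only decrease $\omega$, so surjection is the useful direction: if $G$ surjects onto each $Q_n$, then $\omega(G)\le\omega(Q_n)$ only gives an upper bound for $\omega(G)$, not a lower one. A cleaner route is therefore to take $G=\Aut(F_N)$ for a single large $N$? That does not work either, since $\Aut(F_N)$ has uniform exponential growth via $\GL_N(\Z)$. So a single member of the family cannot serve, and the group must combine the whole family.

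I would proceed via the Minasyan--Osin style construction. Take the free product $H=\ast_n Q_n$ with $Q_n=\EL_{2^{n+2}}(\Z)$, or better a group generated by finitely many elements mapping onto generators of each $Q_n$. Then form a common quotient of acylindrically hyperbolic type using small cancellation over hyperbolic groups (Dehn filling / Olshanskii-type quotients). The key steps, in order, are as follows.

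\textbf{Step 1.} From the body of the paper, for each $n$ produce generating sets $S_n$ of $Q_n$ of a \emph{fixed} cardinality $k$ with $\omega(Q_n,S_n)\to1$. If the sizes are uniform, the sets are images of one free group $F_k$.

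\textbf{Step 2.} Consider $F_k\to\prod_n Q_n$ diagonally and let $G$ be the image, or better a finitely generated group $G$ with surjections $G\to Q_n$ sending a fixed generating set $S$ onto $S_n$. This alone gives only $\omega(G,S)\ge\omega(Q_n,S_n)$, which is again the wrong direction. To fix this, use that growth rates are determined by balls of radius $r$ up to error: $\omega(G,S)\le \gamma_G^S(r)^{1/r}$ by submultiplicativity. Hence it suffices to have $G_n$ whose $r_n$-ball coincides with that of $Q_n$, with $\gamma^{S_n}_{Q_n}(r_n)^{1/r_n}$ close to $1$. I would therefore construct $G$ as a \emph{limit} whose $r_n$-balls with respect to different generating sets $T_n$ agree with those of $Q_n$. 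Concretely, take $G=\ast_n Q_n$ amalgamated appropriately, or embed all $Q_n$ into one group and use generating sets consisting of $S_n$ together with a few extra elements that act trivially on large balls.

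\textbf{Step 3.} Property $(\T)$: for $d\ge3$, $\EL_d(R)$ has $(\T)$ (Ershov--Jaikin-Zapirain), so I would take the $Q_n$ to be $\EL_{2^{n+2}}(R)$. Then build $G$ as a quotient of a hyperbolic group with $(\T)$, or of a free product of Kazhdan groups via small cancellation, keeping acylindrical hyperbolicity (for example by applying Hull's and Osin's small cancellation quotient results for acylindrically hyperbolic groups). Quotients of $(\T)$ groups retain $(\T)$.

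\textbf{Step 4.} Show exponential growth, which follows from acylindrical hyperbolicity (non-elementary implies free subgroups).

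I expect the main obstacle to be Step 2: arranging a single finitely generated group whose generating sets realize ball counts close to those of the $Q_n$. The upper bound must come from $G$ itself, and quotient maps go the wrong way. What I would try first is to take $G$ to be a common ``ancestor'' of the $Q_n$ whose generating sets $T_n$ have \emph{injective} $r_n$-balls into $Q_n$. Such a $G$ would be obtained as a small-cancellation quotient of $\ast_n Q_n$ in which each $Q_n$ is identified with its $r_n$-local structure, using the fact that relations of large length do not affect small balls.
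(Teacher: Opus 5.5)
There is a genuine gap, and it lies in Step 2, which you yourself flag as the main obstacle. It comes from having the arrows backwards. Your opening paragraph even misstates monotonicity: if $G$ surjects onto $Q_n$, then $\omega(Q_n)\le\omega(G)$, not the reverse.

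The missing idea is that the group you want should be a common \emph{quotient} of the family, not a common cover. Suppose every $\Aut(F_{2^{n+2}})$ surjects onto $Q$ via $q_n$. Then $U_n=q_n(\mathcal A_n)$ generates $Q$, and for $\ell\le r_n/3$ you get $\gamma_Q^{U_n}(\ell)\le\gamma^{\mathcal A_n}_{\Aut(F_{2^{n+2}})}(\ell)\le\exp(C\ell^{\alpha})$. With $\ell_n=\lfloor r_n/3\rfloor$ this gives $\omega(Q)\le\omega(Q,U_n)\le\exp(C\ell_n^{\alpha-1})\to 1$. The Minasyan--Osin common quotient theorem supplies such a $Q$, finitely generated and acylindrically hyperbolic, because each $\Aut(F_m)$ with $m\ge 2$ is acylindrically hyperbolic (Genevois--Horbez). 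Exponential growth then follows exactly as in your Step 4.

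Your substitute in Step 2 is a small-cancellation quotient of $\ast_n Q_n$ whose $r_n$-balls inject into those of $Q_n$. This is not an actual construction. It also collapses to the same idea: if the images $T_n$ of $S_n$ are to generate a quotient $G$ of $\ast_n Q_n$, then each factor $Q_n$ already maps onto $G$. So $G$ is a common quotient, the domination of balls is automatic, and no injectivity is needed.

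Your choice of family is also fatal for this route. For $d\ge 3$, $\EL_d(\Z)=\mathrm{SL}_d(\Z)$ is a higher-rank lattice. By Margulis' normal subgroup theorem, its only infinite quotients are by finite central subgroups. Consequently it is not acylindrically hyperbolic, since acylindrically hyperbolic groups are SQ-universal. Moreover, $\mathrm{SL}_d(\Z)$ and $\mathrm{SL}_{d'}(\Z)$ with $d\neq d'$ have no infinite common quotient, for instance because their virtual cohomological dimensions differ. Any common quotient of your $Q_n=\EL_{2^{n+2}}(\Z)$ would therefore be finite. This is why the paper uses $\Aut(F_{2^{n+2}})$ with the generating sets $\mathcal A_n$ from Theorem~\ref{thm:mainestimate} for this result, and not the elementary linear groups.

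Property $(\T)$ then comes for free, since $Q$ is a quotient of the Kazhdan groups $\Aut(F_{2^{n+2}})$ (Kaluba--Kielak--Nowak). Alternatively, you can add a Kazhdan hyperbolic group to the family before applying the common quotient theorem. Your Step 3 suggestion of quotienting a free product of Kazhdan groups does not give $(\T)$ by itself. Such free products act on their Bass--Serre trees without global fixed points, and $(\T)$ survives in the quotient only because the quotient is an image of a single Kazhdan group.
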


Since acylindrically hyperbolic groups are SQ-universal~\cite{DahmaniGuirardelOsin17}, we obtain from \cref{thm:acylhyp} that \emph{every countable group embeds into a group of non-uniform exponential growth with Kazhdan's property~$(\T)$}. This generalizes a result of Bartholdi and Erschler~\cite{BartholdiErschler15}, who proved the corresponding statement without property~$(\T)$.
Earlier work already indicated that acylindrically hyperbolic groups can exhibit substantially more exotic behaviour than relatively hyperbolic groups.
This is illustrated by the failure of the standard strategy for proving uniform exponential growth, which consists in establishing a uniform Tits alternative for monoids by finding a constant $R$ such that, with respect to every finite generating set, the ball of radius $R$ contains two elements generating a free submonoid.
Minasyan and Osin~\cite{MinasyanOsin2018} exhibited this failure by means of a common-quotient construction, producing an acylindrically hyperbolic group with arbitrarily large torsion balls with respect to suitable finite generating sets.
More recently, Legaspi and Steenbock~\cite{LegaspiSteenbock2026} recovered this phenomenon using small-cancellation methods which they showed to preserve uniform exponential growth.
One perspective offered by the common quotient theorem of Minasyan and Osin~\cite{MinasyanOsin2018} is that it reduces the construction of an acylindrically hyperbolic group of non-uniform exponential growth to showing that the infimum of the infimal exponential growth rates of all acylindrically hyperbolic groups is equal to~$1$.
Our proof of \cref{thm:acylhyp} follows this approach and is realized via the following result.

\begin{introtheorem}\label{thm:mainestimate}
There are finite generating sets $\mathcal{A}_n$ of $\Aut(F_{2^{n+2}})$ such that for every $\alpha\in(\alpha_0,1)$ there is a constant $C>0$ with
\[
\gamma^{\mathcal{A}_n}_{\Aut(F_{2^{n+2}})}(\ell)\le \exp\bigl(C \ell^{\alpha}\bigr)
\quad\text{for every}\quad 0\le \ell\le \tfrac{2^{n}-2}{3}.
\]
As a consequence, for every $\beta \in (0,1-\alpha_0)$ we have
\[
  \omega\bigl(\Aut(F_{2^{n+2}})\bigr) \leq \exp\bigl(2^{-\beta(n+2)}\bigr)
\]
for all sufficiently large $n$.
\end{introtheorem}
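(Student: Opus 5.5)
The plan is to compare balls in $\Aut(F_{2^{n+2}})$ with balls in the first Grigorchuk group $\mathcal{G}$, which acts on the binary rooted tree. Truncating this action at depth $n$ gives a homomorphism $\mathcal{G}\to\Sym(\{0,1\}^n)$, the action on the $2^n$ vertices of level $n$. I would choose $\mathcal{A}_n$ so that, on a bounded number of ``blocks'' of free generators, the generators act like the Grigorchuk generators $a,b,c,d$ acting on the level-$n$ vertices. Concretely, let $m=2^{n+2}=4\cdot 2^n$ and index a free basis of $F_m$ by $\{0,1\}^n\times\{1,2,3,4\}$. Each Grigorchuk generator $s\in\{a,b,c,d\}$ is lifted to the automorphism $\hat s$ that permutes the basis according to the level-$n$ permutation of $s$, simultaneously in all four coordinates. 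To generate all of $\Aut(F_m)$, I would add a bounded number of extra automorphisms, for example a single Nielsen transvection and an inversion supported on the basis elements indexed by one specific vertex $v_0=0^n$, together with a bounded number of permutations of the $4$ blocks.

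For the growth bound, an element in the $\ell$-ball of $\mathcal{A}_n$ is a word alternating between Grigorchuk-type letters and at most $\ell$ ``local'' letters. A local letter conjugated by a Grigorchuk word $g$ is again local, now supported at the vertex $g(v_0)$. Hence each element of the ball can be written as a product $\hat g\cdot\prod_i \tau_i^{g_i}$, where $g$ and the $g_i$ are prefixes of a word of length $\le\ell$ in $\mathcal{G}$. Counting gives at most $\gamma_{\mathcal{G}}(\ell)$ choices for the permutation part. The local part is determined by which vertices $g_i(v_0)$ are visited and by the local data there. The key point is that in the Grigorchuk group, the orbit of $v_0$ under prefixes of a word of length $\ell$ visits few vertices, and the visited pattern is itself controlled by the contracting structure. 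I would bound the number of possible ``decorations'' by counting the elements of a lamplighter-like group $\mathcal{G}\ltimes\bigoplus_{v}L_v$ restricted to words of length $\ell$. Using the level-$n$ sections and the contraction $|g_0|+|g_1|\le|g|+1$, one gets a recursion of Bartholdi type. This recursion yields $\exp(C\ell^\alpha)$ for every $\alpha>\alpha_0$, by the Bartholdi--Erschler--Zheng analysis of such permutational wreath extensions; I would cite the $\alpha_0$ upper bound for $\mathcal{G}$ and its extension to decorated versions. The constraint $\ell\le(2^n-2)/3$ is where the truncation at level $n$ is invisible: in words of that length, no two distinct decorated elements of the infinite-level model can coincide after projecting to level $n$, or at least the projection map from the model is well defined and surjective onto the ball. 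I expect to get the constant $3$ from the exact contraction estimates.

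The main obstacle is the second step: showing that the $\ell$-ball of $\mathcal{A}_n$ is the image of the corresponding ball of a decorated Grigorchuk-type group with subexponential growth $\exp(C\ell^\alpha)$, uniformly in $n$. The local generators must interact only through vertex positions, which is why each vertex carries several basis elements and transvections stay within one vertex. I would first try making the local generators supported at the root-adjacent vertices, so that they commute with $b,c,d$ up to sections.

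The consequence then follows by submultiplicativity. Since $\omega(G,S)\le\gamma(\ell)^{1/\ell}$ for every $\ell$, we take $\ell\approx 2^n/3$ and get
\[
\omega\le\exp\bigl(C\ell^{\alpha-1}\bigr)=\exp\bigl(C'2^{-(1-\alpha)n}\bigr).
\]
Choosing $\alpha$ with $1-\alpha>\beta$ absorbs the constant $C'$ and the shift by $2$ for large $n$, giving $\omega\bigl(\Aut(F_{2^{n+2}})\bigr)\le\exp\bigl(2^{-\beta(n+2)}\bigr)$.
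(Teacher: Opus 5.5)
There is a genuine gap, and it lies in your choice of generating set rather than in the counting. You put the Nielsen transvection, the inversion and the sheet-mixing permutations at the same vertex $v_0$, or make the permutations global. The subgroup generated by these finitely many elements preserves the free factor spanned by $u_{(v_0,1)},\dots,u_{(v_0,4)}$. Conjugating the transvection by sheet permutations produces further transvections in this fiber. For instance, a swap of the two sheets involved gives both $u_{(v_0,1)}\mapsto u_{(v_0,1)}u_{(v_0,2)}$ and $u_{(v_0,2)}\mapsto u_{(v_0,2)}u_{(v_0,1)}$, whose images in $\GL_2(\Z)$ generate $\mathrm{SL}_2(\Z)$. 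Hence, with respect to your generating set, the growth function of $\Aut(F_{2^{n+2}})$ is at least $\lambda^{\ell}$ and the growth rate at least $\lambda$, for some $\lambda>1$ independent of $n$. So the claimed bound fails at $\ell\approx 2^n/3$ for large $n$.

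No Bartholdi-type recursion can repair this. The count ``visited vertices times local data'' is subexponential only if the data accumulating at a single vertex ranges over a group of subexponential growth (finite, or $\Z$ for powers of one transvection). The missing idea is the paper's separation. The finite mixing group $M_{\eta_n}(Y_n)$ sits over $\eta_n=1^{n-1}0$, and the transvection $\nu_n$ sits over $\rho_n=1^n$. These are the two endpoints of the path $\Gamma_n$, at distance $2^n-1$ by \cref{lem:separation}. Consequently their conjugates by words of length $<r_n/2$ over $S_n$ have disjoint supports and commute (\cref{lem:commuting-supports}). This gives the normal form $p_1p_2p_3p_4$: over each point of $\OO_{\eta_n}(u)$ the factor $p_2$ lies in a group of order $2^4\cdot 4!$, and over each point of $\OO_{\rho_n}(u)$ the factor $p_4$ is a power of a single transvection with exponent in $[-\ell,\ell]$.

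Your set also does not generate $\Aut(F_{2^{n+2}})$. Each of your generators maps every fiber factor $F(\{x\}\times\{1,2,3,4\})$ onto some fiber factor. This property is closed under composition but fails for the transposition of $u_{(x,1)}$ and $u_{(y,1)}$ with $x\neq y$. A non-local linking generator is therefore needed. The paper uses $\tau_n$, swapping $u_{(\rho_n,1)}$ and $u_{(\theta_n,1)}$ with $\theta_n=\rho_na_n$, and gets connectivity of the graph of its $\mathcal{G}_n$-translates from \cref{lem:pair-transitivity}. Its contribution $p_3$ is not lamplighter data. It is an arbitrary permutation of the sheet-one basis elements over $\OO_{\rho_n}(\widetilde u)$, where $\widetilde u=a_ng_1a_n\,a_ng_2a_n\cdots a_ng_{\ell-1}a_n\,a_ng_\ell$, and it is counted by $m!$ with $m\leq C(3\ell)^{\alpha_0}$. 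Because $\widetilde u$ has length $3\ell-1$, and \cref{prop:BE-finite} transfers the Bartholdi--Erschler bounds from $\Gamma_\infty$ to $\Gamma_n$ only for lengths up to $r_n=2^n-2$, this is where the constraint $\ell\leq(2^n-2)/3$ comes from, not from contraction estimates. The factorial, together with the $2\ell+1$ choices per point for $p_4$, produces the factor $\log\ell$, which is why one only obtains every $\alpha>\alpha_0$. Your final step, deducing the bound on $\omega(\Aut(F_{2^{n+2}}))$ from the ball estimate at $\ell\approx 2^n/3$, is correct and coincides with the paper's.
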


Since $\GL_n(\Z)$ is a quotient of $\Aut(F_n)$, \cref{thm:mainestimate} immediately yields the analogous estimate for the groups $\GL_{2^{n+2}}(\Z)$.
In particular, the infimal exponential growth rates $\omega(\GL_{2^{n+2}}(\Z))$ tend to $1$ as $n \to \infty$.
In contrast, Breuillard's uniform Tits alternative~\cite{Breuillard11} provides for every $d$ a constant $c(d)>1$ such that $\omega(\Gamma)\geq c(d)$ for every field $K$ and every subgroup $\Gamma \leq \GL_d(K)$ that is not virtually solvable.
The assertion that this conclusion extends to all subgroups of exponential growth is known as Breuillard's growth conjecture~\cite{Breuillard14} and was shown by Breuillard and Varj\'u to be equivalent to Lehmer's conjecture on Mahler measures~\cite{BreuillardVarju20}.
By a construction of Grigorchuk and de la
Harpe~\cite{GrigorchukdelaHarpe01lim} it is known that the optimal such constants satisfy $\inf_{d\in\N} c(d) = 1$.
They exhibited a sequence of groups, defined by truncated presentations of the Grigorchuk group, which are commensurable with direct products of non-abelian free groups.
In particular, these groups embed into $\GL_d(\Z)$ for suitable $d$ and are not virtually solvable, while their growth rates with respect to fixed generating sets tend to~$1$, see also~\cite[Remark~1.4]{Breuillard08}.
\Cref{thm:mainestimate} implies that this decay of infimal growth rates is witnessed by the groups $\GL_d(\Z)$ themselves.
In fact, we prove the following more general result, in which the ring $\Z$ is replaced by an arbitrary finitely generated associative (unital) ring $R$.
Since the groups $\GL_n(R)$ need not be finitely generated in this generality, we formulate it for the subgroup $\EL_n(R) \leq \GL_n(R)$ generated by elementary matrices, which is finitely generated for every $n \geq 3$ whenever $R$ is finitely generated.

\begin{introtheorem}\label{thm:el}
For every finitely generated associative unital ring $R$ and every $\beta \in (0,1-\alpha_0)$ we have
\[
\omega\bigl(\EL_{2^{n+2}}(R)\bigr) \leq \exp\bigl(2^{-\beta(n+2)}\bigr)
\]
for all sufficiently large $n$.
In particular, the infimal exponential growth rates
$\omega(\EL_{2^{n+2}}(R))$ tend to $1$ as $n\to\infty$.
\end{introtheorem}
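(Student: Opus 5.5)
Theorem~\ref{thm:el} will follow the same pattern as Theorem~\ref{thm:mainestimate}: for each $n$ we construct a finite generating set $\mathcal{E}_n$ of $\EL_{2^{n+2}}(R)$ whose balls of radius up to roughly $2^n$ grow subexponentially. The bound is of the form $\exp(C\ell^\alpha)$ with $\alpha\in(\alpha_0,1)$, and it comes from a comparison with the first Grigorchuk group $\mathcal{G}$.

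The natural model is the action of $\mathcal{G}$ on level $n+2$ of the binary rooted tree, which has $2^{n+2}$ vertices. Index the standard basis of $R^{2^{n+2}}$ by these vertices. The generators $a,b,c,d$ of $\mathcal{G}$ act on the level as permutations, and we realize them as signed permutation matrices; signed versions are needed so that they lie in $\EL$, since every signed permutation matrix of determinant one is a product of elementary matrices. To these we add a small number of elementary matrices $E_{v,w}(r)$, with $r$ running over a finite ring-generating set of $R$ and $v,w$ fixed vertices at "deep" positions. For generation, conjugating the added elementary matrices by the Grigorchuk permutations yields elementary matrices at many index pairs, and commutator relations $[E_{uv}(r),E_{vw}(s)]=E_{uw}(rs)$ give all of them, because $\mathcal{G}$ acts transitively on each level. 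We must check that the pairs reached suffice to connect all indices. Choosing a second generator at a pair like $(v,w)$ with $v,w$ adjacent should work.

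The growth estimate goes via a contraction or wreath-type decomposition, mimicking Bartholdi's proof of $\gamma_{\mathcal{G}}\preceq e^{n^{\alpha_0}}$. Write an element of the ball of radius $\ell$ as an affine-type word: a permutation part (an element of $\mathcal{G}$ of length $\le\ell$) together with a product of conjugates of the elementary generators. Decompose the matrix into the two halves corresponding to the subtrees at the root. The restriction to each half is then a word of the same kind at level $n+1$, of length about $\eta\ell+O(1)$ in the Bartholdi norm, while the elementary part is carried along. The condition $\ell\le(2^n-2)/3$ should guarantee that before the recursion reaches leaves, words are short enough that the elementary matrices involved lie on a bounded set of positions with bounded entries in terms of the recursion. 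Iterating Bartholdi's weighted-length contraction gives a recursion $N(\ell)\le N(\eta'\ell+c)^2\cdot \mathrm{poly}(\ell)$, which yields $\exp(C\ell^\alpha)$ for any $\alpha>\alpha_0$.

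For the consequence, take $\ell=\lfloor (2^n-2)/3\rfloor$. Then $\omega(\EL,\mathcal{E}_n)\le\gamma(\ell)^{1/\ell}$ by submultiplicativity, and this is at most $\exp(C\ell^{\alpha-1})\le\exp(2^{-\beta(n+2)})$ for large $n$ once $\beta<1-\alpha$.

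The main obstacle is the contraction step for the unipotent/elementary part. The products of conjugated elementary matrices must not create exponentially many distinct matrices. I would first try to bound the number of distinct elementary-part configurations by noting that the nonzero off-diagonal entries lie in positions reachable by the short Grigorchuk word. These positions form a set whose size grows like the growth of $\mathcal{G}$-orbits, so it is subexponential. The entries would be polynomials of degree $\le\ell$ over the finite generating set, and this needs careful counting, possibly by restricting the elementary generators to positions fixed by $b,c,d$ so that they commute with most of the permutation part.
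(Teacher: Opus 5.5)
There is a genuine gap, and it sits exactly at the step you call the main obstacle. Nothing in your construction controls the elementary part, and the proposed contraction does not exist for it. There is no wreath recursion for $\EL$: a matrix does not restrict to the two halves of the level, because elementary matrices and their products mix the halves. So Bartholdi's length contraction has no analogue for the unipotent factors.

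Worse, once two conjugates $E_{vh,wh}(r)$ and $E_{vh',wh'}(s)$ can chain ($wh=vh'$), products acquire entries that are non-commutative polynomials of large degree, and the construction can fail outright. Take your suggested generator at an adjacent pair, $w=vs$ with $s\in S$, and suppose $1$ is in your ring-generating set. Since $s$ is an involution, $ws=v$, so conjugating $E_{v,w}(1)^2$ by the signed permutation matrix of $s$ gives $E_{w,v}(\pm2)$. For $R=\Z$, the elements $E_{v,w}(2)$ and $E_{w,v}(\pm 2)$ have word length at most $4$ and generate a free group by Sanov's theorem. Hence $\omega(\EL_{2^{n+2}}(\Z),\mathcal{E}_n)\geq 3^{1/4}$ for every $n$.

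What is needed is a guarantee that, for words of length up to a constant times $2^n$, all conjugated elementary pieces commute with one another and with the pieces that mix coordinates. Your setup contains no mechanism for this.

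The missing idea is the paper's fibred construction.
\begin{enumerate}
\item Replace the level by $Y_n=L_n\times\{1,2,3,4\}$, with $\mathcal{G}_n$ acting diagonally on the four sheets.
\item Place the elementary generators $e_{(\rho_n,2),(\rho_n,3)}(a)$, $a\in A$, inside a single fiber. All $\mathcal{G}_n$-conjugates are then of the form $e_{(x,2),(x,3)}(\cdot)$ and pairwise commute. At each point they multiply by adding entries, so each coefficient is a $\Z$-combination of elements of $A$, with at most $(2\ell+1)^{|A|}$ possible values rather than polynomially many monomials.
\item Get generation from the monomial group of the fiber over the opposite endpoint $\eta_n$, together with a linking signed transposition on sheet~$1$ between $\rho_n$ and $\theta_n=\rho_na_n$.
\item Since $d_{\Gamma_n}(\rho_n,\eta_n)=2^n-1$ and the sheets are disjoint, conjugates of different types by words of length $<r_n/2$ commute. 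This gives a normal form $p_1p_2p_3p_4$.
\item Count each factor directly with the Bartholdi--Erschler inverted-orbit bounds: supports lie in an inverted orbit of size at most $C\ell^{\alpha_0}$, and there are at most $\exp(C\ell^{\alpha_0})$ such orbits. The factor $p_3$ is handled through the word $a_ng_1a_n\,a_ng_2a_n\cdots a_ng_{\ell-1}a_n\,a_ng_\ell$ of length $3\ell-1$, which is where the restriction $\ell\leq r_n/3$ comes from.
\end{enumerate}
No contraction recursion is needed. Your final step, from the ball estimate at $\ell=\lfloor r_n/3\rfloor$ to $\omega\leq\exp(2^{-\beta(n+2)})$, is fine and matches the paper.
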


The proofs of Theorems~\ref{thm:main}--\ref{thm:el} have a common architecture and share several ingredients. 
An important one is the work of Bartholdi and Erschler on
inverted orbits of Grigorchuk's group~\cite{BartholdiErschler12}.
Another, more implicit, ingredient is that our techniques for proving small infimal exponential growth rates require high virtual cohomological dimension.
As a consequence, all groups that we prove to be of non-uniform exponential growth have infinite virtual cohomological dimension.
Whether this is an artefact of our methods or a genuine necessity is the content of the following question.

\begin{question}
Does there exist a finitely generated group of non-uniform exponential growth that has finite virtual cohomological dimension?
\end{question}

\subsection*{Acknowledgments}
The authors are grateful to Sean Eberhard, Matteo Migliorini and Volodymyr Nekrashevych for a number of helpful discussions. 

\subsection*{Declaration on the use of AI}
The results of this paper were proved in April/May 2026. The current paper merges and extends two preprints by the authors from May and June 2026. 

No form of AI was used for generating the ideas or proofs of this work. 
We did use ChatGPT 5.5 for editorial assistance and help with the TikZ figures and ChatGPT 5.6 for the final proofreading. 

\section{Background on Thompson's group $V$}\label{sec:V-background}

In this section we fix the notation for Thompson's group $V$ and collect some results that will be used throughout the paper.
Let $\mathfrak{C} \defeq \{0,1\}^{\N}$ denote the standard Cantor set, viewed as the set of infinite binary sequences.
We write $\{0,1\}^{\ast} \defeq \coprod_{\ell \in \N_0} \{0,1\}^{\ell}$ to denote the set of finite binary words.
The length of a word $w \in \{0,1\}^{\ast}$ will be denoted by $|w|$.
For every $w \in \{0,1\}^{\ast}$ we refer to
\begin{equation}\label{eq:cylinder}
w\mathfrak{C} \defeq \Set{w \xi}{\xi \in \mathfrak{C}}
\end{equation}
as the \emph{cylinder} associated to $w$, and more generally we write $A \mathfrak{C} \defeq \bigcup_{w \in A} w\mathfrak{C}$ for every subset $A \subseteq \{0,1\}^{\ast}$.
A finite subset $A \subseteq \{0,1\}^{\ast}$ is called a \emph{dyadic partition set of $\mathfrak{C}$} if
\[
\mathfrak{C} = \bigsqcup_{w \in A} w \mathfrak{C}.
\]
Equivalently, a dyadic partition set corresponds to the set of leaves of a finite
binary rooted tree.

\begin{definition}\label{def:V}
\emph{Thompson's group} $V$ is the subgroup of $\Homeo(\mathfrak{C})$ consisting of those homeomorphisms $\gamma$ for which there exist dyadic partition sets $A,B$ of $\mathfrak{C}$ of the same cardinality and a bijection $f \colon A \rightarrow B$ such that
\[
\gamma(w \xi) = f(w) \xi
\]
for every $w \in A$ and every $\xi \in \mathfrak{C}$.
\end{definition}

For background on $V$ and its relatives we refer to~\cite{CannonFloydParry1996,Hig74}.

\begin{notation}\label{not:support}
For every $\gamma \in V$ we write
\[
\supp(\gamma) \defeq \Set{\xi \in \mathfrak{C}}{\gamma(\xi) \neq \xi}
\]
to denote the \emph{support of $\gamma$}.
\end{notation}

\begin{notation}\label{not:local-V}
For every $\gamma \in V$ and every word $w \in \{0,1\}^{\ast}$, we write $w\gamma \in V$ to denote the unique element with $\supp(w\gamma) \subseteq w\mathfrak{C}$ that satisfies
\[
(w\gamma)(w\xi) = w \gamma(\xi)
\qquad \text{for every } \xi \in \mathfrak{C}.
\]
More generally, for every subgroup $H \leq V$, we write
\[
wH \defeq \Set{wh}{h \in H} \leq V.
\]
\end{notation}

The map $\gamma \mapsto w\gamma$ is an injective homomorphism $V \hookrightarrow V$ whose image consists of those elements that are supported on the cylinder $w\mathfrak{C}$.
In what follows we will work with a family of \emph{Higman--Thompson groups} $V_{Z}$, where $Z$ is a non-empty finite set.
To define them, we consider the disjoint union
\begin{equation}\label{eq:cantor-copies}
\mathfrak{C}_{Z} \defeq \bigsqcup_{z \in Z} \mathfrak{C}
\end{equation}
of copies of $\mathfrak{C}$ indexed by $Z$, and write $\mathfrak{C}_{z}$ for the $z$-th copy.
Extending our earlier terminology in the natural way, we call a finite subset $A \subseteq \bigsqcup_{z \in Z} \{0,1\}^{\ast}$ a \emph{dyadic partition set of $\mathfrak{C}_{Z}$} if for each $z \in Z$ the intersection of $A$ with the $z$-th summand $\{0,1\}^{\ast}$ is a dyadic partition set of $\mathfrak{C}_z$ in the sense above.
By definition, $V_{Z}$ consists of those homeomorphisms $\gamma$ of $\mathfrak{C}_{Z}$ for which there exist two dyadic partition sets $A_1$ and $A_2$ of $\mathfrak{C}_{Z}$ of the same cardinality and a bijection $f \colon A_1 \rightarrow A_2$ such that
\[
\gamma(w \xi) = f(w) \xi
\qquad \text{for every } w \in A_1 \text{ and every } \xi \in \mathfrak{C}.
\]
For $Z = \{1, \ldots, r\}$ we recover the standard definition of the Higman--Thompson group $V_{2,r} = V_{\{1,\ldots,r\}}$.
In this terminology, Thompson's group $V$ coincides with $V_{\{1\}}$.

It will be convenient to have an explicit set of generators of $V$.
Following Cannon--Floyd--Parry~\cite{CannonFloydParry1996}, we work with the standard generators $X_0,X_1,X_2,\ldots$ of the subgroup $\F \leq V$ consisting of orientation-preserving piecewise dyadic homeomorphisms.
Concretely, $X_0 \in \F$ is the homeomorphism given by
\begin{equation}\label{eq:thompson-X0}
X_0(00\xi) = 0\xi,
\quad
X_0(01\xi) = 10\xi,
\quad
X_0(1\xi) = 11\xi
\qquad \text{for every } \xi \in \mathfrak{C},
\end{equation}
and, for each $n \geq 1$, the element $X_n$ is given by $X_n = 1^n X_0$.
The group $\F$ is generated by $\{X_0, X_1\}$.
We refer to~\cite{CannonFloydParry1996} for a detailed account.

\begin{notation}\label{not:rigid-stabilizer}
Let $\mathfrak{X}$ be a topological space and let $G \leq \Homeo(\mathfrak{X})$ be a group of homeomorphisms of $\mathfrak{X}$.
For every subset $U \subseteq \mathfrak{X}$, the \emph{rigid stabilizer} of $U$ in $G$ is the subgroup
\[
\RiSt_G(U) \defeq \Set{\gamma \in G}{\gamma(\xi) = \xi \text{ for every } \xi \in \mathfrak{X}\, \setminus U}
\]
of those elements of $G$ that fix the complement of $U$ pointwise.
\end{notation}

The following lemma is an adaptation of~\cite[Lemma~4.4]{BleakElliottHyde24} to our terminology.

\begin{lemma}\label{lem:rigid-stabilizer-union}
Let $Z$ be a non-empty finite set and let $U_1, U_2 \subseteq \mathfrak{C}_{Z}$ be clopen subsets with $U_1 \cap U_2 \neq \emptyset$.
Then
\[
\langle \RiSt_{V_{Z}}(U_1), \RiSt_{V_{Z}}(U_2) \rangle = \RiSt_{V_{Z}}(U_1 \cup U_2).
\]
\end{lemma}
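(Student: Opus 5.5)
The inclusion $\subseteq$ is immediate, since both rigid stabilizers fix the complement of $U_1\cup U_2$ pointwise. For the reverse inclusion, let $H\defeq\langle \RiSt_{V_Z}(U_1),\RiSt_{V_Z}(U_2)\rangle$, let $U\defeq U_1\cup U_2$, and fix $\gamma\in\RiSt_{V_Z}(U)$. If $U_1\subseteq U_2$ or $U_2\subseteq U_1$ there is nothing to prove, so assume neither holds. I will first reduce to the case where $U_1\cup U_2\neq \mathfrak{C}_Z$ is irrelevant: clopen sets in $\mathfrak{C}_Z$ are finite unions of cylinders, so I can choose a nonempty cylinder $W\subseteq U_1\cap U_2$. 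I will use $W$ as a common parking region.

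The key step is to correct $\gamma$ by an element of $H$ so that it preserves $U_1$ (and hence $U_2\setminus U_1$). Set $P\defeq \gamma(U_2\setminus U_1)$; this is a clopen subset of $U$. I want $h\in H$ with $h\gamma$ mapping $U_2\setminus U_1$ onto itself. I proceed as follows.
\begin{enumerate}
\item I decompose $P=P_1\sqcup P_2$ with $P_1=P\cap U_1$ and $P_2=P\setminus U_1\subseteq U_2\setminus U_1$.
\item Using the cylinder $W$, I move $P_1$ into $U_1\cap U_2$ by an element of $\RiSt_{V_Z}(U_1)$. This is possible because in $V_Z$, any two proper clopen subsets $A,B$ of a clopen set $Y$ whose complements $Y\setminus A$ and $Y\setminus B$ are nonempty are swapped by an element of $\RiSt_{V_Z}(Y)$. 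To ensure the complement condition holds, I first shrink the images by subdividing cylinders: $V_Z$ maps any nonempty clopen set onto any nonempty clopen subset of a cylinder.
\item Now everything involved lies in $U_2$. I apply an element of $\RiSt_{V_Z}(U_2)$ mapping the image of $P$ onto $U_2\setminus U_1$, again using the transitivity fact for clopen subsets with nonempty complements.
\end{enumerate}

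The main obstacle is the bookkeeping in the complement condition. The transitivity fact requires that both $A$ and $B$ leave some room in $Y$. This is where $U_1\cap U_2\neq\emptyset$ is used, and where the degenerate cases need care, for example when $P_1=U_1$ entirely. To handle these uniformly, I would first split off a small cylinder inside $W$ using an auxiliary element of $H$ that compresses a piece of the relevant set into half of $W$. This always leaves a nonempty remainder available.

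After the correction, $h\gamma$ preserves $U_2\setminus U_1$, and therefore also preserves $U_1$. A homeomorphism in $V_Z$ that preserves a clopen partition $U_1\sqcup(U_2\setminus U_1)\sqcup(\mathfrak{C}_Z\setminus U)$ and fixes the last piece is a product of an element of $\RiSt_{V_Z}(U_1)$ and an element of $\RiSt_{V_Z}(U_2\setminus U_1)\le\RiSt_{V_Z}(U_2)$. Each factor is again in $V_Z$, because restrictions of $V_Z$-elements to invariant clopen sets extended by the identity remain piecewise dyadic. Hence $\gamma\in H$.
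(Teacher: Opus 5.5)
Your argument is correct. It takes a genuinely different route from the paper, which gives no proof of its own and imports the statement from Bleak--Elliott--Hyde. You prove it directly: you show that $H$ can carry $P=\gamma(U_2\setminus U_1)$ onto $U_2\setminus U_1$. Then $h\gamma$ preserves the partition $U_1\sqcup(U_2\setminus U_1)$ of $U$, and by the cut-and-paste property of $V_Z$ it splits into elements of $\RiSt_{V_Z}(U_1)$ and $\RiSt_{V_Z}(U_2\setminus U_1)$.

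The degenerate case is only gestured at in your write-up, but it is easy to make precise.
\begin{itemize}
\item Suppose $U_1\subseteq P$. Then $U\setminus P=\gamma(U_1)$ is a nonempty clopen subset of $U_2\setminus U_1$, so its complement in $U_2$ contains $U_1\cap U_2$. Hence an element of $\RiSt_{V_Z}(U_2)$ maps $U\setminus P$ onto a nonempty clopen $B\subsetneq U_1\cap U_2$. Afterwards $P\cap U_1=U_1\setminus B$ is nonempty and proper in $U_1$.
\item In the generic case, take the target of step~2 to be a nonempty clopen $B'\subsetneq U_1\cap U_2$. Then $(U_1\cap U_2)\setminus B'\neq\emptyset$ is exactly the room needed in $U_2$ for step~3.
\item If $P\cap U_1=\emptyset$, skip step~2; the room in step~3 is then $U_1\cap U_2$.
\end{itemize}
Two wording fixes: your transitivity fact also needs $A$ and $B$ to be nonempty, and ``swapped'' should read ``mapped onto one another''.

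For comparison, the usual short proof of such statements goes through small-support generators. The group $\RiSt_{V_Z}(U)$ is generated by the transpositions $(u\ v)$ exchanging disjoint cylinders $u\mathfrak{C},v\mathfrak{C}\subseteq U$. One way to see this: these transpositions generate a nontrivial normal subgroup of the simple group $\RiSt_{V_Z}(U)\cong V$. After subdividing, each cylinder lies in one of $U_1\setminus U_2$, $U_1\cap U_2$, $U_2\setminus U_1$. The only problematic transpositions have $u\mathfrak{C}\subseteq U_1\setminus U_2$ and $v\mathfrak{C}\subseteq U_2\setminus U_1$, and these satisfy $(u\ v)=(u\ c)(c\ v)(u\ c)$ for any cylinder $c\mathfrak{C}\subseteq U_1\cap U_2$.

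That route is shorter but needs a known generating set, or simplicity of $V$, as input. Yours uses only two things: transitivity of rigid stabilizers on clopen sets with nonempty complement, and closure under cutting and pasting along clopen partitions. So it works verbatim for any group of homeomorphisms with those two properties.
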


\section{Background on Grigorchuk's group $\mathcal{G}$}\label{sec:Grigorchuk-background}

In this section we fix some notation for the first Grigorchuk group, which we denote by $\mathcal{G}$, and collect the results about $\mathcal{G}$ that will be used in the remainder of the paper.
We refer to~\cite{BartholdiGrigorchukSunic2003} for a comprehensive introduction to $\mathcal{G}$ and to branch groups in general.
Let $\T$ denote the binary rooted tree with vertex set $\{0,1\}^{\ast}$ in which two vertices $v,w$ are connected by an edge if and only if either $v = wx$ or $w = vx$ for some $x \in \{0,1\}$.
For each $n \in \N_0$, the \emph{$n$-th level} of $\T$ is the set $L_n \defeq \{0,1\}^n$ of vertices at distance $n$ from the root.
The group of automorphisms of $\T$ will be denoted by $\Aut(\T)$.
The root of $\T$ is the only vertex of valence $2$ in $\T$ and is therefore fixed by $\Aut(\T)$.
Since moreover every element of $\Aut(\T)$ preserves the distance between vertices, it follows that $\Aut(\T)$ preserves each level $L_n$.
In particular, for every subgroup $G \leq \Aut(\T)$ we obtain a natural homomorphism from $G$ to the symmetric group $\Sym(L_n)$, which we denote by $\pi_n$.
We write $\sigma_g \in \Sym(\{0,1\})$ to denote the image of $g$ under $\pi_1$.
Conversely, we identify each permutation $\sigma \in \Sym(\{0,1\})$ with the automorphism of $\T$ given by $xw \mapsto \sigma(x)w$ for all $x \in \{0,1\}$ and $w \in \{0,1\}^{\ast}$.
The two automorphisms of $\T$ that arise in this way are called \emph{rooted}.
Another way to produce automorphisms of $\T$ is to act on the two subtrees below the root independently: Each pair $(g_0, g_1) \in \Aut(\T) \times \Aut(\T)$ gives rise to the automorphism of $\T$ given by $xw \mapsto x\, g_x(w)$ for all $x \in \{0,1\}$ and $w \in \{0,1\}^{\ast}$.
Together with the rooted ones, automorphisms of this form can be used to decompose every automorphism of $\T$ as follows.

\begin{definition}\label{def:wreath-decomposition}
Let $\alpha \in \Aut(\T)$ and let $v \in \{0,1\}^{\ast}$ be a vertex.
The \emph{state} of $\alpha$ at $v$ is the unique automorphism $\alpha_v \in \Aut(\T)$ that satisfies
\[
\alpha(vw) = \alpha(v)\, \alpha_v(w)
\]
for every $w \in \{0,1\}^{\ast}$.
For a vertex $x \in \{0,1\}$ of the first level, this reads $\alpha(xw) = \sigma_{\alpha}(x)\, \alpha_x(w)$.
The resulting decomposition $\alpha = \sigma_{\alpha} \circ (\alpha_0, \alpha_1)$ is called the \emph{wreath decomposition} of $\alpha$.
\end{definition}

The wreath decomposition endows us with an isomorphism
\[
\Aut(\T) \longrightarrow \Sym(\{0,1\}) \ltimes \bigl(\Aut(\T) \times \Aut(\T)\bigr),
\ \alpha \mapsto \sigma_{\alpha} \cdot (\alpha_0, \alpha_1),
\]
which we will use to identify an element $\alpha \in \Aut(\T)$ with its wreath decomposition $\sigma_{\alpha} \cdot (\alpha_0, \alpha_1)$. 

\begin{definition}\label{def:self-similar}
A subgroup $G \leq \Aut(\T)$ is called \emph{self-similar} if for every $g \in G$ and every vertex $v \in \{0,1\}^{\ast}$ the state $g_v$ is contained in $G$.
\end{definition}

A prominent example of a self-similar group is the first Grigorchuk group $\mathcal{G}$, see e.g.~\cite{BartholdiGrigorchukSunik03}.
It is the subgroup of $\Aut(\T)$ generated by the four automorphisms $a, b, c, d$ defined as follows.
The generator $a$ is the rooted automorphism corresponding to the non-trivial permutation in $\Sym(\{0,1\})$, while $b, c, d$ fix the first level and are given by the wreath decompositions
\[
b = (a, c),
\qquad
c = (a, d),
\qquad
d = (1, b).
\]
A direct verification shows that each of the generators in $S \defeq \{a, b, c, d\}$ is an involution, so that $\mathcal{G}$ is generated by $S$ as a monoid.
For each $n \in \N_0$, we write
\[
a_n, b_n, c_n, d_n \in \Sym(L_n)
\]
to denote the images of $a, b, c, d$ under $\pi_n$ and we set
\begin{equation}\label{eq:grigorchuk-quotient}
\mathcal{G}_n \defeq \langle a_n, b_n, c_n, d_n \rangle \leq \Sym(L_n).
\end{equation}
The action of $\mathcal{G}$ on $\T$ is \emph{spherically transitive}, that is, the induced action of $\mathcal{G}$ on $L_n$ is transitive for every $n \in \N_0$, see e.g.~\cite[Section~1]{BartholdiGrigorchukSunic2003}.
In other words, the (labeled) Schreier graph of the action of $\mathcal{G}$ on each level $L_n$ is connected.
Recall that, given a group $H$ acting on a set $\Omega$ and a generating set $S$ of $H$, the associated \emph{labeled Schreier graph} is the graph with vertex set $\Omega$ in which, for every $\omega \in \Omega$ and every $s \in S$, there is an edge labeled $s$ from $\omega$ to $\omega s$.
For each $n \in \N_0$, we write $\Gamma_n$ for the labeled Schreier graph of the action of $\mathcal{G}$ on $L_n$ with respect to $S$.
For the rest of the paper we fix the two vertices
\begin{equation}\label{eq:distinguished-vertices}
\rho_n \defeq 1^n
\qquad\text{and}\qquad
\eta_n \defeq 1^{n-1}0
\end{equation}
of $\Gamma_n$.
As the action of $\mathcal{G}$ on the tree $\T$ extends continuously to its boundary $\partial \T = \{0,1\}^{\N} = \mathfrak{C}$, we may also consider the labeled Schreier graph $\Gamma_{\infty}$ of the orbit of the boundary point $\rho \defeq 1^{\infty} \in \mathfrak{C}$, with respect to $S$.
We now relate the Schreier graphs $\Gamma_n$ and $\Gamma_{\infty}$.
In order to do so, we work with a useful description of the graphs $\Gamma_n$ that was provided by Bartholdi and Grigorchuk~\cite[Section~5.1]{BartholdiGrigorchuk00}.
They showed that for every $n \in \N$ the graph $\Gamma_{n+1}$ arises from $\Gamma_n$ in two steps.
First, each vertex $v$ of $\Gamma_n$ is relabeled to $1v$, while the edge labels $b$, $c$, $d$ are replaced by $d$, $b$, $c$, respectively.
Then each $a$-labeled edge, which now connects two vertices $1v$ and $1w$, is subdivided into a path of length three.
More precisely, the edge is removed and two new vertices $0v$ and $0w$ are added.
These are connected to each other by two edges labeled $b$ and $c$, and to $1v$ and $1w$, respectively, by $a$-labeled edges.
In addition, each of the two new vertices carries a $d$-labeled loop.
The two steps are illustrated in~\cref{fig:gammatransition}.
\begin{figure}
\centering

\begingroup
\def\vertexgap{1.25cm}
\def\rowgap{2.5cm}
\begin{tikzpicture}[
    font=\small,
    vertex/.style={circle, fill, inner sep=0pt, minimum size=3pt,
                   outer sep=1pt},
    word/.style={label distance=3pt},
    edge/.style={line width=0.6pt},
    loop edge/.style={edge, loop above, looseness=7, min distance=7mm},
    transition/.style={-{Stealth}, line width=0.8pt, decorate,
        decoration={snake, amplitude=0.5mm, segment length=2.5mm,
                    post length=1.5mm}},
    every label/.style={font=\small}
]
\node[vertex, word, label=below:$11$] (g2-11) {};
\node[vertex, word, right=\vertexgap of g2-11, label=below:$01$] (g2-01) {};
\node[vertex, word, right=\vertexgap of g2-01, label=below:$00$] (g2-00) {};
\node[vertex, word, right=\vertexgap of g2-00, label=below:$10$] (g2-10) {};
\node[left=5mm of g2-11] {$\Gamma_2$};

\draw[edge] (g2-11) -- node[above] {$a$} (g2-01);
\draw[edge] (g2-01) -- node[above] {$b$} node[below] {$c$} (g2-00);
\draw[edge] (g2-00) -- node[above] {$a$} (g2-10);
\foreach \word/\generators in {11/{b\,c\,d},01/d,00/d,10/{b\,c\,d}} {
    \draw[loop edge] (g2-\word) to node[above] {$\generators$} (g2-\word);
}

\node[vertex, word, below=\rowgap of g2-11, label=below:$111$] (step1-111) {};
\node[vertex, word, right=\vertexgap of step1-111, label=below:$101$] (step1-101) {};
\node[vertex, word, right=\vertexgap of step1-101, label=below:$100$] (step1-100) {};
\node[vertex, word, right=\vertexgap of step1-100, label=below:$110$] (step1-110) {};

\draw[edge] (step1-111) -- node[above] {$a$} (step1-101);
\draw[edge] (step1-101) -- node[above] {$b$} node[below] {$d$} (step1-100);
\draw[edge] (step1-100) -- node[above] {$a$} (step1-110);
\foreach \word/\generators in {111/{b\,c\,d},101/c,100/c,110/{b\,c\,d}} {
    \draw[loop edge] (step1-\word) to node[above] {$\generators$} (step1-\word);
}

\node[vertex, word, below=\rowgap of step1-101, label=below:$101$] (g3-101) {};
\node[vertex, word, left=\vertexgap of g3-101, label=below:$001$] (g3-001) {};
\node[vertex, word, left=\vertexgap of g3-001, label=below:$011$] (g3-011) {};
\node[vertex, word, left=\vertexgap of g3-011, label=below:$111$] (g3-111) {};
\node[vertex, word, right=\vertexgap of g3-101, label=below:$100$] (g3-100) {};
\node[vertex, word, right=\vertexgap of g3-100, label=below:$000$] (g3-000) {};
\node[vertex, word, right=\vertexgap of g3-000, label=below:$010$] (g3-010) {};
\node[vertex, word, right=\vertexgap of g3-010, label=below:$110$] (g3-110) {};
\node[left=5mm of g3-111] {$\Gamma_3$};

\foreach \from/\to in {110/010,000/100,101/001,011/111} {
    \draw[edge] (g3-\from) -- node[above] {$a$} (g3-\to);
}
\foreach \from/\to/\lowerlabel in {010/000/c,100/101/d,001/011/c} {
    \draw[edge] (g3-\from) -- node[above] {$b$}
                                  node[below] {$\lowerlabel$} (g3-\to);
}
\foreach \word/\generators in {
    110/{b\,c\,d},010/d,000/d,100/c,101/c,001/d,011/d,111/{b\,c\,d}
} {
    \draw[loop edge] (g3-\word) to node[above] {$\generators$} (g3-\word);
}

\coordinate (top centre) at ($(g2-01)!0.5!(g2-00)$);
\coordinate (middle centre) at ($(step1-101)!0.5!(step1-100)$);
\coordinate (bottom centre) at ($(g3-100)!0.5!(g3-101)$);
\draw[transition] ($(top centre)+(0,-7mm)$)
    -- node[right=1mm] {Step 1} ($(middle centre)+(0,12mm)$);
\draw[transition] ($(middle centre)+(0,-7mm)$)
    -- node[right=1mm] {Step 2} ($(bottom centre)+(0,12mm)$);
\end{tikzpicture}
\endgroup

\caption{The two steps that turn $\Gamma_2$ into $\Gamma_{3}$.
As every element of $S$ is an involution, each $s$-labeled edge is either a loop or one of two mutually reverse edges, which we draw as a single undirected edge.}
\label{fig:gammatransition}
\end{figure}
Starting from $\Gamma_1$, which consists of a single $a$-labeled edge joining $\eta_1=0$ and $\rho_1=1$, together with $b$-, $c$-, and $d$-labeled loops at both vertices, an inductive application of these two steps shows that $\Gamma_n$ is a path (with loops and parallel edges) on $2^n$ vertices whose endpoints are $\eta_{n}$ and $\rho_{n}$.
This yields the following well-known fact, which we record for later reference.

\begin{lemma}\label{lem:separation}
For every $n\geq 1$, the distance between $\eta_n$ and $\rho_n$ in the Schreier graph $\Gamma_n$ is 
\[
d_{\Gamma_n}(\eta_n,\rho_n)=2^n-1.
\]
\end{lemma}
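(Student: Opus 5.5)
The plan is to prove, by induction on $n$, the stronger structural statement already sketched before the lemma. The claim is that $\Gamma_n$ is a path on its $2^n$ vertices, where consecutive vertices are joined by edges labelled either $a$ or by a pair from $\{b,c,d\}$, and all remaining edges are loops. Its two endpoints are $\eta_n=1^{n-1}0$ and $\rho_n=1^n$. Moreover, the edges incident to each endpoint other than loops are $a$-edges, and each endpoint carries $b$-, $c$-, and $d$-loops. Once this is established, the distance between the endpoints of a path with $2^n$ vertices is $2^n-1$. Loops and parallel edges do not shorten distances, so this gives the lemma.

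The base case $n=1$ is immediate: $a$ swaps $0$ and $1$, while $b,c,d$ fix the first level. So $\Gamma_1$ is a single $a$-edge between $\eta_1=0$ and $\rho_1=1$, with loops at both vertices. For the inductive step I use the Bartholdi--Grigorchuk description of $\Gamma_{n+1}$ in terms of $\Gamma_n$ recalled above. If I wanted to be self-contained, I would verify it directly from the wreath recursions $b=(a,c)$, $c=(a,d)$, $d=(1,b)$ and $a$ acting as the rooted swap. On a vertex $1v$, the generators $b,c,d$ act as $c,d,b$ on $v$, which gives the relabelling. On a vertex $0v$, the generators $b,c$ act as $a$ on $v$ and $d$ acts trivially. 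Finally, $a$ swaps $0v$ with $1v$.

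Given this, Step 1 keeps the path structure, on the vertices $1v$, with the same endpoints relabelled $1\eta_n=1^n0=\eta_{n+1}$ and $1\rho_n=\rho_{n+1}$. Step 2 replaces each $a$-edge of the path by a path of length three through two new vertices. This again gives a path, whose endpoints are unchanged because the new vertices are interior. Its vertex count is $2^n+2\cdot(\#\text{$a$-edges})$. The endpoint property is also preserved: after Step 1 each endpoint still has $b,c,d$-loops, and after Step 2 the edge leaving each endpoint is an $a$-edge.

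The main obstacle is the vertex count, that is, showing that exactly $2^{n-1}$ of the $2^n-1$ edges of $\Gamma_n$ are $a$-edges. This is easiest via the fact that $a$ acts on $L_n$ as a fixed-point-free involution, flipping the first letter. Hence the $a$-edges form a perfect matching on the $2^n$ vertices, so there are $2^{n-1}$ of them, and $\Gamma_{n+1}$ has $2^n+2^n=2^{n+1}$ vertices. Alternatively, the count is automatic, since the new vertices $0v$ together with the $1v$ exhaust $L_{n+1}$ by construction. Either way the path has $2^{n+1}$ vertices, and $d_{\Gamma_{n+1}}(\eta_{n+1},\rho_{n+1})=2^{n+1}-1$, which completes the induction.
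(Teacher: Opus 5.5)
Your proposal is correct and follows the paper's route: induct via the Bartholdi--Grigorchuk two-step description to see that $\Gamma_n$ is a path on $2^n$ vertices with endpoints $\eta_n$ and $\rho_n$, then note that loops and parallel edges do not affect distances. Checking the two steps directly from the wreath recursions is a good addition, but the vertex count is not a real obstacle, since the vertex set is $L_n$ by definition, as you observe yourself.
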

\begin{proof}
Loops and parallel edges do not affect graph distances, so the claim follows from the fact that $\eta_{n}$ and $\rho_{n}$ are the endpoints of $\Gamma_n$.
\end{proof}

\noindent For $g\in\mathcal{G}$, let $|g|_S$ denote its word length with respect to $S$.
As a consequence of~\cref{lem:separation}, we can determine up to which value of $|g|_S$ the element $g$ fixes $\rho_n$ if and only if it fixes $\rho$.

\begin{lemma}\label{lem:fixation-transfer}
Let $n \in \N$ and let $g \in \mathcal{G}$ be an element with $|g|_S \leq 2^{n+1} - 2$.
Then $g$ fixes the vertex $\rho_n$ if and only if $g$ fixes the boundary point $\rho$.
\end{lemma}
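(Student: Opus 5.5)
One direction is immediate: if $g$ fixes $\rho = 1^\infty$, then it fixes every prefix of $\rho$, and in particular $\rho_n = 1^n$. The work is in the converse. I will prove it by contraposition: assume $g$ fixes $\rho_n$ but moves $\rho$, and derive a lower bound on $|g|_S$ via the path structure of the Schreier graphs $\Gamma_m$ and \cref{lem:separation}.

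Since $g(\rho) \neq \rho$, there is a smallest $m$ with $g(1^m) \neq 1^m$. Because $g$ fixes $1^n$, it fixes all prefixes $1^k$ with $k \le n$, so $m \ge n+1$. Minimality gives $g(1^{m-1}) = 1^{m-1}$, and level-preservation then forces $g(1^m) = 1^{m-1}0 = \eta_m$. Hence in $\Gamma_m$ the element $g$ maps $\rho_m$ to $\eta_m$.

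Now write $g = s_1 \cdots s_k$ with $k = |g|_S$ and $s_i \in S$. Reading the letters one at a time moves $\rho_m$ along a walk of length $k$ in $\Gamma_m$ that ends at $\eta_m$. (The left/right action convention is harmless: since each generator is an involution, reversing the word gives a walk in the other direction of the same length.) By \cref{lem:separation}, such a walk needs $k \ge 2^m - 1 \ge 2^{n+1} - 1$. This contradicts $|g|_S \le 2^{n+1}-2$, so $g$ must fix $\rho$.

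The only point needing care is the consistency of the Schreier graph conventions, namely that a word of length $k$ moves a vertex by distance at most $k$. This holds because every edge of $\Gamma_m$ corresponds to a single generator, and since the generators are involutions the graph is effectively undirected, so I expect no real obstacle.
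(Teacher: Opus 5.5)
Your proposal is correct and matches the paper's proof: the paper takes the maximal $m \ge n$ with $g(1^m)=1^m$ (your $m-1$), deduces $g(\rho_{m+1})=\eta_{m+1}$, and applies \cref{lem:separation} to get $|g|_S \ge 2^{m+1}-1 \ge 2^{n+1}-1$. One small wording point: what forces $g(1^m)\in\{1^{m-1}0,1^m\}$ is that $g$ is a tree automorphism fixing $1^{m-1}$ and hence permutes its children, not level-preservation alone.
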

\begin{proof}
If $g$ fixes $\rho = 1^{\infty}$, then it fixes each of its prefixes $\rho_n = 1^n$.
Suppose now that $g$ fixes $\rho_n$ but not $\rho$.
Since $g$ acts on $\T$ by automorphisms, there is a maximal $m \in \N$ with $g(1^m) = 1^m$ and $m \geq n$.
Moreover we have $g(1^{m+1}) \in \{1^m 0,\ 1^{m+1}\}$, so the maximality of $m$ gives us $g(\rho_{m+1}) = 1^m 0 = \eta_{m+1}$.
Every word over $S$ representing $g$ therefore yields a path from $\rho_{m+1}$ to $\eta_{m+1}$ in $\Gamma_{m+1}$.
~\cref{lem:separation} therefore implies
\[
|g|_S \geq d_{\Gamma_{m+1}}(\rho_{m+1}, \eta_{m+1}) = 2^{m+1} - 1 \geq 2^{n+1} - 1,
\]
which contradicts our assumption.
\end{proof}

Using~\cref{lem:fixation-transfer}, we will deduce that the balls of radius $2^n - 2$ around the points $\rho_n$ and $\eta_n$ in $\Gamma_n$ are isomorphic, as labeled graphs, to the ball of the same radius around $\rho$ in $\Gamma_{\infty}$.
For a labeled graph $G$, a vertex $p$ of $G$, and $r \in \N$, we will write $G(p, r)$ for the labeled subgraph induced by the vertices at distance at most $r$ from $p$.
To prove the claim, we first record a general criterion under which two such balls are isomorphic.

\begin{lemma}\label{lem:stabilizer-ball-criterion}
Let $H$ be a group with generating set $S$ that acts on two sets $\Omega$ and~$\Omega'$.
Let $G$ and $G'$ be the associated Schreier graphs in which we consider two points $\omega \in \Omega$ and $\omega' \in \Omega'$.
If the stabilizers of $\omega$ and $\omega'$ satisfy 
\[
\St_H(\omega) \cap B_S(2r+1) = \St_H(\omega') \cap B_S(2r+1),
\]
then the map
\[
\varphi \colon G(\omega, r) \rightarrow G'(\omega', r),\ \omega h \mapsto \omega' h
\qquad (|h|_S\leq r)
\]
is a well-defined isomorphism of labeled graphs.
\end{lemma}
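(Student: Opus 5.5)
We check well-definedness and injectivity first, then bijectivity on vertices, and finally edges with labels. Throughout, the actions are right actions as in the definition of Schreier graphs, so $\omega h_1 = \omega h_2$ if and only if $h_1 h_2^{-1} \in \St_H(\omega)$.

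\emph{Vertices.} Let $h_1,h_2$ satisfy $|h_1|_S,|h_2|_S\le r$. Then $|h_1h_2^{-1}|_S\le 2r \le 2r+1$. By the hypothesis,
\[
\omega h_1=\omega h_2 \iff h_1h_2^{-1}\in \St_H(\omega)\cap B_S(2r+1) \iff \omega' h_1=\omega' h_2 .
\]
The forward implication shows that $\varphi$ is well defined, and the backward implication shows that it is injective. Surjectivity onto the vertices of $G'(\omega',r)$ comes from two facts. First, every vertex of $G(\omega,r)$ has the form $\omega h$ with $|h|_S\le r$, since a path of length at most $r$ from $\omega$ reads such a word. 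Second, the same description holds in $G'$, so every vertex of $G'(\omega',r)$ is of the form $\omega' h$ with $|h|_S \le r$. Hence $\varphi$ is a bijection between the vertex sets of the two balls.

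\emph{Edges.} An $s$-labelled edge of the induced subgraph $G(\omega,r)$ joins $\omega h$ to $\omega h s$, where $|h|_S\le r$ and $\omega hs$ also lies in the ball. In that case $\omega hs=\omega k$ for some $k$ with $|k|_S\le r$. Then $hsk^{-1}$ has length at most $2r+1$ and lies in $\St_H(\omega)$. By the hypothesis it also lies in $\St_H(\omega')$, so $\omega' hs=\omega' k=\varphi(\omega hs)$. Thus $\varphi$ sends every $s$-edge to an $s$-edge between the image vertices. The situation is symmetric in $(\Omega,\omega)$ and $(\Omega',\omega')$, and the inverse map $\omega' h\mapsto \omega h$ is exactly $\varphi^{-1}$. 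The same argument therefore shows that $\varphi^{-1}$ preserves labelled edges as well.

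If multi-edges are counted, note that for each vertex and each $s\in S$ there is exactly one outgoing $s$-edge. The correspondence above is then a bijection on labelled edges, including loops, since loops are the case $k=h$ up to the stabilizer.

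The main point to get right is the length bound $2r+1$. It is needed exactly in the edge step, where the word $hsk^{-1}$ has length at most $r+1+r$. The vertex step only needs length $2r$.
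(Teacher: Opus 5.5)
Your proof is correct and follows the paper's argument: both reduce well-definedness and injectivity to the membership of $h_1h_2^{-1}\in B_S(2r)$ in the stabilizers, and edge preservation to the membership of $hsk^{-1}\in B_S(2r+1)$. The only cosmetic difference is that you prove edge preservation in one direction and then appeal to symmetry, whereas the paper states the edge criterion as a single biconditional.
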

\begin{proof}
For any $g, g' \in H$ with $|g|_S, |g'|_S \leq r$, the element $g' g^{-1}$ lies in $B_S(2r)$.
Our hypothesis therefore gives us
\[
\omega g = \omega g'
\iff
g' g^{-1} \in \St_H(\omega)
\iff
g' g^{-1} \in \St_H(\omega')
\iff
\omega' g = \omega' g'.
\]
In particular we see that $\varphi$ is well-defined and injective.
Since every vertex of $G'(\omega', r)$ is of the form $\omega' g$ with $|g|_S \leq r$ it follows that $\varphi$ is also surjective.
To check that $\varphi$ preserves labeled edges in both directions, let $g,h\in H$ satisfy $|g|_S,|h|_S\leq r$, and let $s\in S$.
Since $|gsh^{-1}|_S\leq 2r+1$, our hypothesis gives
\[
\omega gs=\omega h
\iff
gsh^{-1}\in\St_H(\omega)
\iff
gsh^{-1}\in\St_H(\omega')
\iff
\omega'gs=\omega'h.
\]
Thus there is an $s$-labeled edge from $\omega g$ to $\omega h$ if and only if there is an $s$-labeled edge from $\omega'g$ to $\omega'h$, including when the edge is a loop.
Hence $\varphi$ is an isomorphism of labeled graphs.
\end{proof}

\begin{lemma}\label{lem:local-convergence}
For every $n \geq 2$ the map
\[
\Gamma_n(\rho_n, 2^n - 2) \rightarrow \Gamma_{\infty}(\rho, 2^n - 2), \ \rho_n h \mapsto \rho h
\]
is an isomorphism of labeled graphs.
\end{lemma}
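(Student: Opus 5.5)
We apply \cref{lem:stabilizer-ball-criterion} with $H=\mathcal{G}$, generating set $S$, $\Omega=L_n$, $\omega=\rho_n$, $\Omega'$ the $\mathcal{G}$-orbit of $\rho$ in $\mathfrak{C}$, and $\omega'=\rho$, taking $r\defeq 2^n-2$. The criterion then produces exactly the map $\rho_n h\mapsto \rho h$ for $|h|_S\le r$ as an isomorphism of labeled graphs $\Gamma_n(\rho_n,r)\to\Gamma_\infty(\rho,r)$. So it suffices to verify the hypothesis
\[
\St_{\mathcal{G}}(\rho_n)\cap B_S(2r+1)=\St_{\mathcal{G}}(\rho)\cap B_S(2r+1).
\]

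Here $2r+1=2^{n+1}-3\le 2^{n+1}-2$. Hence every $g\in B_S(2r+1)$ satisfies the hypothesis of \cref{lem:fixation-transfer}, which says $g$ fixes $\rho_n$ if and only if $g$ fixes $\rho$. This is precisely the required equality of the truncated stabilizers.

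One bookkeeping point needs attention. The Schreier graphs use the right-action notation $\omega\mapsto\omega s$, while \cref{lem:fixation-transfer} is phrased with $g(\rho_n)$. Since each generator in $S$ is an involution, $B_S(m)$ is closed under inverses. So whichever convention is used to turn words into elements, the set of elements of length at most $2r+1$ fixing a point is the same. The word length bound in \cref{lem:fixation-transfer} is unaffected by this choice.

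The assumption $n\ge 2$ ensures $r=2^n-2\ge 2$, so the statement is non-degenerate. It also ensures $\rho_n$ and $\Gamma_n$ are defined as in the setup. There is no real obstacle: the only thing to check carefully is the inequality $2(2^n-2)+1\le 2^{n+1}-2$, which is where the radius $2^n-2$ comes from.
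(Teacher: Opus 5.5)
Your proposal is correct and follows the paper's proof: apply \cref{lem:stabilizer-ball-criterion} with $r=2^n-2$, and obtain the equality of truncated stabilizers from \cref{lem:fixation-transfer} via $2(2^n-2)+1=2^{n+1}-3\le 2^{n+1}-2$. Your extra remark on left versus right action conventions is a harmless addition that the paper leaves implicit.
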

\begin{proof}
Let $g \in \mathcal{G}$ be an element with $|g|_S \leq 2 \cdot (2^n - 2)+1$.
By~\cref{lem:stabilizer-ball-criterion}, it suffices to show that $g$ fixes $\rho_n$ if and only if it fixes $\rho$.
Since
\[
2 \cdot (2^n - 2)+1 = 2^{n+1} - 3 \leq 2^{n+1} - 2,
\]
this is precisely the statement of~\cref{lem:fixation-transfer}.
\end{proof}

\begin{corollary}\label{cor:eta-local-convergence}
For every $n \geq 2$ the map
\[
\Gamma_n(\eta_n, 2^{n}-2) \rightarrow \Gamma_{\infty}(\rho, 2^{n}-2), \ \eta_n h \mapsto \rho h
\]
is an isomorphism of labeled graphs.
\end{corollary}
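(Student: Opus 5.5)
The plan is to reduce to \cref{lem:local-convergence} by exhibiting an isomorphism of labeled graphs $\Gamma_n(\eta_n,2^n-2)\to\Gamma_n(\rho_n,2^n-2)$ sending $\eta_n h\mapsto \rho_n h$, and then composing. By \cref{lem:stabilizer-ball-criterion}, applied with $\Omega=\Omega'=L_n$, $\omega=\eta_n$ and $\omega'=\rho_n$, it suffices to show: every $g\in\mathcal{G}$ with $|g|_S\le 2^{n+1}-3$ fixes $\eta_n$ if and only if it fixes $\rho_n$. Alternatively, and this is the route I would take, we can apply \cref{lem:stabilizer-ball-criterion} directly with $\omega=\eta_n\in L_n$ and $\omega'=\rho$ in the orbit of $\rho$, using \cref{lem:fixation-transfer} to pass from $\rho_n$ to $\rho$.

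The key observation is that $\eta_n=1^{n-1}0$ and $\rho_n=1^n$ share the parent $1^{n-1}$. So let $g$ have $|g|_S\le 2^{n+1}-3$. If $g$ fixes $\rho_n$, then $g$ fixes $1^{n-1}$ and hence permutes its two children; since $g(1^n)=1^n$, it must also fix $1^{n-1}0=\eta_n$. The converse is symmetric: if $g$ fixes $\eta_n$, it fixes $1^{n-1}$ and therefore also $\rho_n$. Consequently $\St(\eta_n)=\St(\rho_n)$ as subgroups of $\mathcal{G}$, with no length restriction at all. Combining this with \cref{lem:fixation-transfer}, which applies since $2^{n+1}-3\le 2^{n+1}-2$, shows that for $|g|_S\le 2\cdot(2^n-2)+1$ the element $g$ fixes $\eta_n$ if and only if it fixes $\rho$.

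\Cref{lem:stabilizer-ball-criterion} with $r=2^n-2$ then yields that $\eta_n h\mapsto \rho h$ for $|h|_S\le r$ is a well-defined isomorphism $\Gamma_n(\eta_n,2^n-2)\to\Gamma_\infty(\rho,2^n-2)$ of labeled graphs. There is no real obstacle here. The only point needing care is the sibling argument, namely that an automorphism fixing one child of a fixed vertex fixes the other child. This holds because automorphisms preserve levels and adjacency, and $g$ must fix the parent $1^{n-1}$, which is the unique neighbour of $\eta_n$ (resp. $\rho_n$) on level $n-1$.
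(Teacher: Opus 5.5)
Your proof is correct and is essentially the paper's argument: both rest on the equality $\St_{\mathcal{G}}(\eta_n)=\St_{\mathcal{G}}(\rho_n)$, which you justify explicitly via the sibling argument, combined with \cref{lem:stabilizer-ball-criterion}. The only difference is cosmetic: the paper first builds the isomorphism $\Gamma_n(\eta_n,r_n)\cong\Gamma_n(\rho_n,r_n)$ and composes it with \cref{lem:local-convergence}, whereas you apply the criterion once with $\omega'=\rho$ and invoke \cref{lem:fixation-transfer} directly.
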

\begin{proof}
Since $\mathcal{G}$ acts on $\T$ by automorphisms, it follows that $\St_{\mathcal{G}}(\rho_n) = \St_{\mathcal{G}}(\eta_n)$.
In particular we have
\[
\St_{\mathcal{G}}(\eta_n) \cap B_S(2r_n+1) = \St_{\mathcal{G}}(\rho_n) \cap B_S(2r_n+1)
\]
for $r_n = 2^{n}-2$.
Thus~\cref{lem:stabilizer-ball-criterion} provides us with the isomorphism
\[
\Gamma_n(\eta_n, r_n) \to \Gamma_n(\rho_n, r_n),\ \eta_n h \mapsto \rho_n h.
\]
Composing it with the isomorphism $\Gamma_n(\rho_n, r_n) \to \Gamma_{\infty}(\rho, r_n)$ of~\cref{lem:local-convergence} gives the claimed isomorphism.
\end{proof}

In what follows we will need a strengthening of spherical transitivity of $\mathcal{G}$, which is provided in \cite[Proposition~12 in~Appendix~A2]{bekka+harpe+grigorchuk}.
To formulate it, we consider the metric $\delta$ on $L_n$ given by $\delta(x,y) = e^{-\ell(x,y)}$ for $x\ne y$, where $\ell(x,y)$ denotes the length of the maximal common prefix of $x$ and $y$.

\begin{lemma}\label{lem:pair-transitivity}
For every $n \in \N_0$, the diagonal action of $\mathcal{G}_n$ on $L_n \times L_n$ is transitive on the pairs of points of the same $\delta$-distance.
\end{lemma}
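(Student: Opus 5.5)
We prove the statement by induction on $n$, using the self-similar structure of $\mathcal{G}$ through the wreath decomposition. For $n=0$ there is nothing to show, and for $n=1$ the pairs at distance $e^{0}$ are $(0,1)$ and $(1,0)$; the generator $a$ swaps them. The diagonal pairs $(x,x)$ have distance $0$ by convention, and transitivity on them is just the spherical transitivity of $\mathcal{G}$ on $L_n$. So the content of the claim is: for $x\neq y$ and $x'\neq y'$ in $L_n$ with $\ell(x,y)=\ell(x',y')=k<n$, there is $g\in\mathcal{G}$ with $g(x)=x'$ and $g(y)=y'$.

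\textbf{Reduction to the root vertex.} First I will use spherical transitivity on $L_k$ to move the common prefix $u$ of $x,y$ to the common prefix $u'$ of $x',y'$. It therefore suffices to treat $x=u x_1$, $y=u y_1$, $x'=u x'_1$, $y'=u y'_1$, where $x_1,y_1$ (and likewise $x_1',y_1'$) differ in their first letter. This needs the stabiliser $\St_{\mathcal{G}}(u)$, restricted to the subtree below $u$, to act on that subtree as all of $\mathcal{G}$. This is the standard fact that $\mathcal{G}$ is \emph{fractal}: the map $\St_{\mathcal{G}}(u)\to\mathcal{G}$, $g\mapsto g_u$, is surjective. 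It follows from the level-one case, namely that $\St_{\mathcal{G}}(1)\to\mathcal{G}$, $g\mapsto g_0$ (and likewise $g\mapsto g_1$) is onto. That case I will verify directly: $b=(a,c)$, $c=(a,d)$, $d=(1,b)$ and $aba=(c,a)$, etc., so the projections of $\St_{\mathcal{G}}(1)$ already contain $a,b,c,d$. The general case then follows by induction on the length of $u$.

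\textbf{The case $k=0$.} Now $x=0p$, $y=1q$ and $x'=\epsilon p'$, $y'=\bar\epsilon q'$. After applying $a$ if necessary we may assume $\epsilon=0$. We then need $g\in\St_{\mathcal{G}}(1)$ with $g_0(p)=p'$ and $g_1(q)=q'$. In other words, the image $\psi(\St_{\mathcal{G}}(1))\le\mathcal{G}\times\mathcal{G}$ must act transitively on $L_{n-1}\times L_{n-1}$ through its two coordinates.

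This is the step I expect to be the main obstacle, because $\psi(\St_{\mathcal{G}}(1))$ is not the full product $\mathcal{G}\times\mathcal{G}$. I would handle it with the known branch structure of $\mathcal{G}$. Let $K=\langle (ab)^2\rangle^{\mathcal{G}}$. Then $\psi(\St_{\mathcal{G}}(1))\supseteq K\times K$, and $K$ has index $16$ in $\mathcal{G}$ with $\mathcal{G}/K$ of order $16$. The plan has two parts:
\begin{enumerate}
\item Show that $K$ itself acts transitively on every level $L_m$. I would check this via the generators $(ab)^2$, $(bada)^2$, $(abad)^2$ for small $m$, and then argue inductively, using $\psi(K)\supseteq K\times K$ together with an element of $K$ that swaps the two halves.
\item Given transitivity of $K$ on each level, pick $h_0,h_1\in K$ with $h_0(p)=p'$ and $h_1(q)=q'$. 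Since $K\times K\subseteq\psi(\St_{\mathcal{G}}(1))$, the pair $(h_0,h_1)$ lifts to some $g\in\St_{\mathcal{G}}(1)$.
\end{enumerate}

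If the transitivity of $K$ turns out to be delicate at low levels, the fallback is a direct argument. Take $g\in\St_{\mathcal{G}}(1)$ with $g_0(p)=p'$, which exists by fractalness. Then correct the second coordinate using elements $h\in\St_{\mathcal{G}}(1)$ with $h_0\in\St_{\mathcal{G}}(p')$. By fractalness applied at the vertex $0p'$, the corresponding states $h_1$ should sweep out a transitive subgroup on $L_{n-1}$.

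Finally, since the claim is stated for $\mathcal{G}_n=\pi_n(\mathcal{G})$, everything above descends from $\mathcal{G}$ to $\mathcal{G}_n$ by applying $\pi_n$.
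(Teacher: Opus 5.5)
Your reduction to the case $k=0$ via fractalness is correct, and you isolate the right crux: the group $\psi(\St_{\mathcal{G}}(1))=\{(g_0,g_1)\mid g\in\St_{\mathcal{G}}(1)\}$ must act transitively on $L_{n-1}\times L_{n-1}$. The argument you give for this step does not work.

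\textbf{Part~1 is false.} The element $(ab)^2$ acts trivially on $L_1$, so its normal closure $K=\langle (ab)^2\rangle^{\mathcal{G}}$ lies in the normal subgroup $\St_{\mathcal{G}}(1)$. Hence $K$ fixes both vertices of the first level and preserves the two halves $0L_{m-1}$ and $1L_{m-1}$ of every level $L_m$. So $K$ is transitive on no $L_m$ with $m\geq 1$, and no element of $K$ swaps the halves.

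\textbf{Consequences.} Part~2 then fails whenever $p$ and $p'$ (or $q$ and $q'$) begin with different letters, since no element of $K$ maps $p$ to $p'$. The fallback does not close the gap either. Fractalness at $0p'$ only says that the states $h_{0p'}$ of elements $h\in\St_{\mathcal{G}}(0p')$ exhaust $\mathcal{G}$; it says nothing about the states $h_1$. The claim that these $h_1$ act transitively on $L_{n-1}$ means that $\St_{\mathcal{G}}(0p')$ is transitive on $1L_{n-1}$. That is precisely the case $k=0$ of the lemma, so invoking it is circular.

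\textbf{What is missing.} You need the correct orbit structure of $K$, together with elements of $\psi(\St_{\mathcal{G}}(1))$ that swap halves in exactly one coordinate.
\begin{enumerate}
\item \emph{Orbits of $K$.} By induction on $m$, starting from the trivial case $m=1$, one shows that for $m\geq 1$ the $K$-orbits on $L_m$ are exactly $0L_{m-1}$ and $1L_{m-1}$. Indeed, $K$ acts on $xL_{m-1}$ through the projection of $\psi(K)$ to the $x$-th coordinate. This projection contains $K$, because $K\times K\leq\psi(K)$. It also contains the $x$-th coordinate of $\psi((ab)^2)$, which is $ca$ or $ac$. Both of these swap the two halves of $L_{m-1}$, and by induction those halves are the $K$-orbits there, so the projection is transitive on $L_{m-1}$.
\item \emph{Transitivity on pairs.} For $n\geq 2$, the subgroup $K\times K\leq\psi(\St_{\mathcal{G}}(1))$ has as its orbits on $L_{n-1}\times L_{n-1}$ the four products of halves. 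The elements $\psi(b)=(a,c)$ and $\psi(aba)=(c,a)$ permute these four blocks transitively, since $a$ swaps the halves while $c$ preserves them.
\end{enumerate}
This gives the transitivity your reduction requires. For comparison, the paper gives no argument of its own for this lemma; it simply cites Bekka, de la Harpe and Grigorchuk.
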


We now turn to growth results concerning $\mathcal{G}$ and its Schreier graphs $\Gamma_{\infty}$ and $\Gamma_n$.
All growth estimates that enter our arguments will be expressed in terms of a single constant $\alpha_0$, which we now introduce.

\begin{notation}\label{not:alpha0}
Let $\eta$ denote the unique real root of the polynomial $x^3 + x^2 + x - 2$ and set
\[
\alpha_0 \defeq \frac{\log(2)}{\log(2/\eta)} \approx 0.7674.
\]
\end{notation}

The following theorem of Bartholdi~\cite{Bartholdi98} sharpens Grigorchuk's result~\cite{Grigorchuk84} that $\mathcal{G}$ is of subexponential growth.

\begin{theorem}[Bartholdi]\label{thm:G-subexponential}
There is a constant $C > 0$ such that
\[
\gamma_{\mathcal{G}}^S(n) \leq \exp(C n^{\alpha_0})
\]
for every $n \in \N$.
\end{theorem}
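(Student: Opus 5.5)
This is Bartholdi's theorem, and I would follow his strategy: replace the word length by a weighted length that contracts under the wreath recursion, then turn the contraction into a recursive inequality for the growth function.

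\textbf{Step 1: the contraction inequality.} Assign positive weights $\omega(a),\omega(b),\omega(c),\omega(d)$ to the generators, chosen in terms of the root $\eta$ of $x^3+x^2+x-2$. Write $|g|_\omega$ for the minimal weighted length of a word representing $g$. Since the weights are bounded above and below, $|\cdot|_\omega$ is bi-Lipschitz to $|\cdot|_S$. So it suffices to prove the bound for the balls of $|\cdot|_\omega$.

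The core claim I would aim for is a contraction over three levels: there is a constant $K$ such that for every $g\in\mathcal{G}$
\[
\sum_{v\in L_3}|g_v|_\omega \le \eta^3\,|g|_\omega + K .
\]
To prove it, take a reduced word for $g$, which alternates $a$ with letters from $\{b,c,d\}$. Use the recursions $b=(a,c)$, $c=(a,d)$, $d=(1,b)$ to write the states $g_v$ as words, iterating three times. Then track how each letter contributes to the total length on level $3$, using the relations $bcd=1$, $a^2=1$ and the cancellations these create, such as $d\mapsto(1,b)$ losing its $a$-component.

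This reduces to a finite linear system, one inequality per type of short subword. The weights must be chosen so that every subword is contracted by at least the factor $\eta^3$. This is exactly where the cubic $x^3+x^2+x-2$ enters, and I expect this bookkeeping, together with finding the weights, to be the main obstacle.

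\textbf{Step 2: the counting recursion.} An element $g$ is determined by its image $\pi_3(g)\in\mathcal{G}_3$, which ranges over a finite set, together with its eight states $(g_v)_{v\in L_3}$. Write $\beta(n)$ for the number of elements with $|g|_\omega\le n$. Step 1 then gives
\[
\beta(n)\le |\mathcal{G}_3|\sum_{n_1+\dots+n_8\le \eta^3 n+K}\ \prod_{i=1}^8 \beta(n_i).
\]

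\textbf{Step 3: induction.} I would show by induction that $\beta(n)\le\exp(C n^{\alpha_0})$. By concavity of $t\mapsto t^{\alpha_0}$, the sum $\sum_i n_i^{\alpha_0}$ is at most $8^{1-\alpha_0}(\eta^3n+K)^{\alpha_0}$. Since $\alpha_0=\log 8/\log(8/\eta^3)$, we have $8^{1-\alpha_0}\eta^{3\alpha_0}=1$, so the main term reproduces $C n^{\alpha_0}$ exactly.

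The difficulty is the error terms: the additive $K$, the factor $|\mathcal{G}_3|$, and the polynomially many (of order $n^8$) tuples $(n_1,\dots,n_8)$. The first thing I would try is to prove a slightly stronger statement that leaves room for these, for instance $\beta(n)\le\exp(Cn^{\alpha_0}-D)$ for $n$ above a threshold, with the small cases absorbed into $C$.

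If the errors cannot be absorbed exactly at $\alpha_0$, I would use the slack in the concavity estimate. Tuples far from balanced lose a definite amount in $\sum_i n_i^{\alpha_0}$, while near-balanced tuples are few. Failing that, I would accept $\exp(Cn^{\alpha})$ for every $\alpha>\alpha_0$, which is all the later results need, since Theorem~\ref{thm:mainestimate} only assumes $\alpha\in(\alpha_0,1)$.
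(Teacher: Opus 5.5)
The paper does not prove this statement; it quotes it from Bartholdi~\cite{Bartholdi98}. Your architecture is Bartholdi's: a weighted norm that contracts under the wreath recursion, followed by counting states. As written, however, the proposal does not reach the exponent $\alpha_0$ itself, so there is a gap.

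\textbf{The gap is in Step~3.} Because $8^{1-\alpha_0}\eta^{3\alpha_0}=1$ exactly, the inductive step only gives
\[
\beta(n)\le |\mathcal{G}_3|\cdot O(n^8)\cdot\exp\bigl(C(n+K')^{\alpha_0}\bigr).
\]
There is no slack left in the exponent to absorb the factor $|\mathcal{G}_3|\cdot O(n^8)$. Your first repair does not fix this. From $\beta(m)\le\exp(Cm^{\alpha_0}-D)$ you only get $\beta(n)\le\exp\bigl(C(n+K')^{\alpha_0}-8D+8\log n+O(1)\bigr)$. This exceeds $\exp(Cn^{\alpha_0}-D)$ as soon as $8\log n$ is larger than $7D$ plus a constant. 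The balanced/unbalanced analysis is only sketched. Your fallback gives $\exp(C_\alpha n^\alpha)$ for every $\alpha>\alpha_0$; that would suffice for the later theorems of the paper, but it is weaker than the statement. Step~1, which carries the actual content, is also left open: you specify neither the weights nor the verification of the contraction.

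\textbf{How to close it.} Both problems disappear with Bartholdi's weights $|a|=1-\eta^3$, $|b|=\eta^3$, $|c|=1-\eta^2$, $|d|=1-\eta$, together with a depth that grows with $n$.
The relation $\eta^3+\eta^2+\eta=2$ gives $|b|=|c|+|d|$, so minimal words can be taken to alternate between $a$ and $\{b,c,d\}$. The same relation gives the per-letter identities $|a|+|c|=\eta(|b|+|a|)$, $|a|+|d|=\eta(|c|+|a|)$ and $|b|=\eta(|d|+|a|)$.
Reading off the sections of a minimal word letter by letter then yields the one-level lemma $|g_0|+|g_1|\le\eta(|g|+|a|)$. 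No bookkeeping of cancellations over three levels is needed. Iterating it gives $\sum_{v\in L_k}|g_v|\le\eta^k|g|+c\,2^k$.

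Now do not induct at a fixed depth. An element of norm at most $n$ is determined by $\pi_k(g)$, which has at most $2^{2^k}$ possible values, and by $2^k$ states of total norm at most $N=\eta^kn+c\,2^k$. The crude bound $\beta(m)\le e^{c'm}$ shows there are at most $\binom{N+2^{k+1}}{2^k}e^{c'(N+2^k)}$ such tuples. Take $k=\lceil\log n/\log(2/\eta)\rceil$; then $\eta^kn\le 2^k\le 2n^{\alpha_0}$. Hence $\log\beta(n)=O(2^k)=O(n^{\alpha_0})$, and there are no error terms left to absorb.
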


\begin{remark}\label{rem:alpha0-optimal}
The constant $\alpha_0$ in \cref{thm:G-subexponential} was shown to be optimal by Erschler and Zheng~\cite{ErschlerZheng20} in the sense that $\exp(n^{\beta}) \preceq \gamma_{\mathcal{G}}^S(n)$ for every $\beta < \alpha_0$.
\end{remark}

While~\cref{thm:G-subexponential} concerns the growth of balls in the Cayley graph $\Cay(\mathcal{G},S)$, the quantity we are interested in for the Schreier graphs $\Gamma$ and $\Gamma_n$ is the \emph{inverted orbit growth} -- a notion systematically studied by Bartholdi and Erschler~\cite{BartholdiErschler12}.

\begin{convention}
Following their convention in~\cite{BartholdiErschler12}, we will from now on work with right actions, which are more convenient for the study of inverted orbits.
\end{convention}

\begin{definition}\label{def:inverted-orbit}
Let $H$ be a group acting from the right on a set $\Omega$. Let $A$ be a generating set of $H$, and let $\omega \in \Omega$.
The \emph{inverted orbit} of a word $w = s_1 s_2 \cdots s_{\ell} \in A^{\ast}$ at $\omega$ is the set
\[
\OO_{\omega}(w) \defeq \{\omega,\ \omega s_{\ell},\ \omega s_{\ell-1}s_{\ell},\ \ldots,\ \omega s_1 s_2 \cdots s_{\ell}\} \subseteq \Omega.
\]
The cardinality of $\OO_{\omega}(w)$ will be denoted by $\delta_{\omega}(w)$.
We further define the \emph{inverted orbit growth function}
\[
\Delta_{\omega}^{A}(n) \defeq \max\Set{\delta_{\omega}(w)}{w \in A^{\ast},\ |w| \leq n},
\]
and we write
\[
N_{\omega}^{A}(n) \defeq \bigl| \Set{\OO_{\omega}(w)}{w \in A^{\ast},\ |w| \leq n} \bigr|
\]
for the number of distinct inverted orbits arising from words of length at most $n$.
\end{definition}

Bartholdi and Erschler studied the functions $\Delta_{\rho}^{S}$ and $N_{\rho}^{S}$ for the action of $\mathcal{G}$ on the $\mathcal{G}$-orbit of $\rho$.
In~\cite[Proposition~4.4 and Lemma~4.9]{BartholdiErschler12} they obtained the following upper bounds for these functions, which involve the same exponent $\alpha_0$ as in \cref{thm:G-subexponential}.

\begin{theorem}[Bartholdi-Erschler]\label{thm:BE}
There exists a constant $C > 0$ such that
\[
\Delta_{\rho}^{S}(n) \leq C n^{\alpha_0}
\qquad\text{and}\qquad
N_{\rho}^{S}(n) \leq \exp(C n^{\alpha_0})
\]
for every $n \in \N$, where $\Delta_{\rho}^{S}$ and $N_{\rho}^{S}$ refer to the action of $\mathcal{G}$ on the $\mathcal{G}$-orbit of $\rho$.
\end{theorem}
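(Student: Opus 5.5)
Since this is the result of Bartholdi and Erschler~\cite[Proposition~4.4 and Lemma~4.9]{BartholdiErschler12}, I would reproduce their argument along the lines of Bartholdi's proof of \cref{thm:G-subexponential}. The idea is a self-similar recursion for inverted orbits, combined with a weighted word length that contracts under passing to sections.

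The first step is the recursion. Let $w = s_1\cdots s_\ell$ be a word over $S$ and let $\omega$ be a point in the $\mathcal{G}$-orbit of $\rho$. Each point $\omega s_i\cdots s_\ell$ begins with a first letter $x\in\{0,1\}$. Its tail is obtained from the tail of the previous point by the state of $s_i$ at the relevant vertex. Reading $w$ from the right and recording the states at the current first letter, $w$ splits into two section words $w_0,w_1$, one per first letter. Up to cancellation of consecutive $a$'s, each of $w_0,w_1$ is again a word over $S$, and
\[
\OO_\omega(w) \subseteq 0\,\OO_{\omega_0'}(w_0)\ \cup\ 1\,\OO_{\omega_1'}(w_1).
\]
Here $\omega_0',\omega_1'$ are the tails at the first visit to each half; if one half is never visited, that piece is omitted. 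In particular
\[
\delta_\omega(w)\le \delta_{\omega_0'}(w_0)+\delta_{\omega_1'}(w_1).
\]
For $N$, the inverted orbit $\OO_\omega(w)$ is determined by the pair of section inverted orbits together with boundedly many bits of data. These bits are the starting halves and the information about where $\omega_0',\omega_1'$ sit relative to $\rho$.

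The second step is contraction. One step of the recursion only gives $|w_0|+|w_1|\le |w|+1$. So, exactly as in Bartholdi's argument, I would iterate to depth three and use a weighted length $\|\cdot\|$ with weights on $a$ and on $b,c,d$ chosen via the root $\eta$ of $x^3+x^2+x-2$. Bartholdi's computation then yields
\[
\sum_{v\in L_3}\|w_v\| \le \eta^{3}\|w\|+O(1)
\]
for reduced words. Combining this with the recursion gives a bound of the shape
\[
\Delta(n)\le \max_{\sum n_v\le \eta^3 n+C}\ \sum_{v\in L_3}\Delta(n_v).
\]
By concavity the maximum is attained at balanced splittings $n_v\approx \eta^3 n/8$. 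Solving gives $\Delta(n)\le Cn^{\gamma}$ with $8(\eta/2)^{3\gamma}=1$, that is $\gamma=\log 2/\log(2/\eta)=\alpha_0$. The same recursion applied to $N$ reads
\[
N(n)\le \mathrm{poly}(n)\cdot\max\prod_v N(n_v).
\]
The polynomial factor counts the splittings $(n_v)$ and the bounded extra data. Taking logarithms, this is the same subadditive recursion, and it yields $\log N(n)\le Cn^{\alpha_0}$.

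I expect the main obstacle to be the change of basepoint. The recursion produces inverted orbits based at tails $\omega_v'$ that need not equal $\rho$, so the bounds must be proved uniformly over all basepoints in the orbit of $\rho$. One must also check that the extra data needed to reconstruct $\OO_\omega(w)$ from the sectional orbits is genuinely bounded. I would handle this using the explicit linear structure of $\Gamma_\infty$, which is a ray with end $\rho$, as described above through the Bartholdi--Grigorchuk construction. Under the states of $b,c,d$, the points in the orbit of $\rho$ have tails either equal to a point of the orbit of $\rho$ or to the single other orbit in play, namely that of $01^\infty$ (the $\eta$-side, compare \cref{cor:eta-local-convergence}). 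So I would run the induction over this finite family of orbit types simultaneously, proving the statement for all points in them at once. A secondary technical point is controlling the $O(1)$ error terms so that they do not spoil the power $n^{\alpha_0}$; these are absorbed by enlarging $C$.
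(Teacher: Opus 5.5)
\textbf{The recursion in your first step is not valid.} It treats the inverted orbit as the trajectory of a walk, and the rest of the proposal depends on it.

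With the right action used in the paper, $\omega s_is_{i+1}\cdots s_\ell=(\omega s_i)s_{i+1}\cdots s_\ell$. So $s_i$ acts \emph{first}, on $\omega$, and not last on the previous point $\omega s_{i+1}\cdots s_\ell$. In general, consecutive points of $\OO_\omega(w)$ are not neighbours in $\Gamma_\infty$. What you describe (``its tail is obtained from the tail of the previous point by the state of $s_i$'', ``reading $w$ from the right'') is a walk.

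For walks, the inclusion $\OO_\omega(w)\subseteq 0\,\OO_{\omega_0'}(w_0)\cup 1\,\OO_{\omega_1'}(w_1)$ is false. The tail is a single quantity, and it is also moved by the states recorded while the walk is in the other half. For example, take the walk from $\rho$ that applies $a,b,a,b,a,b$ in this order (your reading of $w=bababa$). Its points with first letter $0$ are $0\,1^\infty$, $0\,01^\infty$, $0\,001^\infty$, $0\,101^\infty$. The states recorded there give $w_0=aa$, but $\OO_{1^\infty}(aa)=\{1^\infty,01^\infty\}$.

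More decisively, if your inclusion held, the argument would apply verbatim to walks. It would then bound the number of vertices visited by any walk of length $n$ by $Cn^{\alpha_0}$. But $\Gamma_\infty$ is a ray issuing from $\rho$, so a walk of length $n$ can visit $n+1$ vertices. The sublinearity of $\Delta^S_\rho$ is specific to inverted orbits, and your recursion does not capture it.

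\textbf{What is true instead.} Write $\omega=y\omega'$ with $y\in\{0,1\}$. Then
$\omega(s_i\cdots s_\ell)=\bigl(y\,s_i\cdots s_\ell\bigr)\bigl(\omega'\cdot(s_i\cdots s_\ell)|_y\bigr)$.
Here $(s_i\cdots s_\ell)|_y=(s_i)|_y(s_{i+1})|_{ys_i}\cdots(s_\ell)|_{ys_i\cdots s_{\ell-1}}$ is exactly the suffix from position $i$ of the section $w|_{x_i}$ of the \emph{whole} word, at the vertex $x_i=y\,s_1\cdots s_{i-1}$. The first letter of the point is $x_i$ shifted by the parity of the number of $a$'s in $w$. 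Hence $\OO_\omega(w)\subseteq\{0,1\}\,\OO_{\omega'}(w|_0)\cup\{0,1\}\,\OO_{\omega'}(w|_1)$, with the first letter determined by the half, and $\delta_\omega(w)\le\delta_{\omega'}(w|_0)+\delta_{\omega'}(w|_1)$.

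This differs from your version in three ways.
\begin{enumerate}
\item The split is by the parity of $a$'s in the prefix $s_1\cdots s_{i-1}$, not by the current first letter.
\item Each of $b,c,d$ contributes a state to \emph{both} halves, so $w|_0,w|_1$ are exactly Bartholdi's sections. These already contract at depth one: $\|w|_0\|+\|w|_1\|\le\eta(\|w\|+\|a\|)$ for the weights $\|a\|=1-\eta^3$, $\|b\|=\eta^3$, $\|c\|=1-\eta^2$, $\|d\|=1-\eta$.
\item Both halves have the same base point $\omega'$, which is $\rho$ again when $\omega=\rho=1\rho$. So the change of base point you identify as the main obstacle never arises. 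Also, $01^\infty=\rho a$ lies in the orbit of $\rho$, so there is no second orbit to track.
\end{enumerate}

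The point that does need care is the one you pass over with ``up to cancellation of consecutive $a$'s''. Inverted orbits are not invariant under free reduction: cancelling $aa$ can delete a point. You can avoid cancellation altogether. Since $b,c,d$ fix $\rho$, merging adjacent $\{b,c,d\}$-letters in a section leaves $\OO_\rho$ unchanged. You can then run the contraction on words in which any two such letters are separated by at least one $a$. The reconstruction of $\OO_\rho(w)$ from the two section orbits, which you need for $N^S_\rho$, must likewise be redone for this decomposition.
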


Using our preceding lemmas, we transfer the estimates from~\cref{thm:BE} to the action of $\mathcal{G}$ on $L_n$ and the points $\rho_n$ and $\eta_n$.

\begin{proposition}\label{prop:BE-finite}
Let $C$ be as in~\cref{thm:BE} and let $r_n = 2^{n}-2$.
For every $n$, every $\xi_n \in \{\rho_n, \eta_n\}$, and every $k \leq r_n$, we have
\[
\Delta_{\xi_n}^{S_n}(k) \leq C k^{\alpha_0}
\qquad\text{and}\qquad
N_{\xi_n}^{S_n}(k) \leq \exp(C k^{\alpha_0}),
\]
where $\Delta_{\xi_n}^{S_n}$ and $N_{\xi_n}^{S_n}$ refer to the action of $\mathcal{G}_n$ on $L_n$ with respect to $S_n$.
\end{proposition}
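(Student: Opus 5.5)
Our plan is to reduce everything to the statements about $\rho$ in $\Gamma_{\infty}$ from \cref{thm:BE}. For this we use the local isomorphisms of \cref{lem:local-convergence} and \cref{cor:eta-local-convergence}. Here $S_n = \{a_n,b_n,c_n,d_n\}$, and words over $S_n$ are identified with words over $S$ via $s_n \leftrightarrow s$. The action of $\mathcal{G}_n$ on $L_n$ is the action of $\mathcal{G}$ through $\pi_n$, so inverted orbits for $S_n$ at $\xi_n$ coincide with inverted orbits for $S$ at $\xi_n$ in $\Gamma_n$.

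Fix $\xi_n \in \{\rho_n,\eta_n\}$ and a word $w = s_1\cdots s_k$ with $k \le r_n$. Every point of $\OO_{\xi_n}(w)$ has the form $\xi_n s_j s_{j+1}\cdots s_k$, and is reached from $\xi_n$ by a path in $\Gamma_n$ of length at most $k \le r_n$. Hence the whole inverted orbit lies in the ball $\Gamma_n(\xi_n, r_n)$. Let
\[
\varphi\colon \Gamma_n(\xi_n, r_n)\to \Gamma_\infty(\rho, r_n),\qquad \xi_n h\mapsto \rho h,
\]
be the isomorphism of labeled graphs given by \cref{lem:local-convergence} or \cref{cor:eta-local-convergence}; this needs $n\ge 2$. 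For $n\le 1$ we have $r_n\le 0$, so only $k=0$ occurs and the claim is trivial. Since $\varphi$ is well defined on elements of length at most $r_n$, it maps $\xi_n s_j\cdots s_k$ to $\rho s_j\cdots s_k$. Therefore
\[
\varphi\bigl(\OO_{\xi_n}(w)\bigr)=\OO_\rho(w),
\]
and $\varphi$ is a bijection between these sets.

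Two consequences follow. First, $\delta_{\xi_n}(w)=\delta_\rho(w)$ by injectivity, so $\Delta^{S_n}_{\xi_n}(k)\le \Delta^S_\rho(k)\le Ck^{\alpha_0}$ by \cref{thm:BE}. Second, the assignment $\OO_{\xi_n}(w)\mapsto \OO_\rho(w)$ is well defined and injective on inverted orbits of words of length at most $k$. Indeed, both $\OO_{\xi_n}(w)$ and $\OO_\rho(w)$ are images of one another under the fixed bijection $\varphi$ restricted to the ball, so equal orbits on one side give equal orbits on the other. Hence $N^{S_n}_{\xi_n}(k)\le N^S_\rho(k)\le \exp(Ck^{\alpha_0})$.

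The only delicate point is the well-definedness in the second consequence. The map on inverted orbits must be induced by a single bijection of point sets, independent of the word. This is exactly what \cref{lem:stabilizer-ball-criterion} provides: $\varphi$ is defined pointwise on the ball, not word by word. The remaining checks are routine: the radius bookkeeping $k\le r_n$, and the observation that all suffixes have length at most $k$.
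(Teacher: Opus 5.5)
Your proposal is correct and is essentially the paper's own proof. Like the paper, you restrict the ball isomorphism $\varphi$ from \cref{lem:local-convergence} and \cref{cor:eta-local-convergence} to inverted orbits, obtain $\delta_{\xi_n}(w)=\delta_\rho(w)$ and an injection of inverted orbits, and then invoke \cref{thm:BE}. One small quibble: for $k=0$ the first bound reads $1\le 0$, so your $n\le 1$ case is not ``trivial''; it is vacuous only if $k\ge 1$, as in \cref{thm:BE} where $n\in\N$, and the paper's statement shares this boundary issue.
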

\begin{proof}
We fix $\xi_n \in \{\rho_n, \eta_n\}$.
By~\cref{lem:local-convergence} and~\cref{cor:eta-local-convergence}, the map
\[
\varphi \colon \Gamma_n(\xi_n, r_n) \rightarrow \Gamma_{\infty}(\rho, r_n),\ \xi_n h \mapsto \rho h
\]
is a well-defined isomorphism of labeled graphs.
Consider a word $w = s_1 \ldots s_{\ell} \in S_n^{\ast}$ with $|w| = \ell \leq r_n$.
Every element of the inverted orbit $\OO_{\xi_n}(w)$ is of the form $\xi_n s_i s_{i+1} \ldots s_{\ell}$ and hence lies in the ball $\Gamma_n(\xi_n, r_n)$.
By the same reasoning we have $\OO_{\rho}(w) \subseteq \Gamma_{\infty}(\rho, r_n)$.
Under the isomorphism $\varphi$, the vertex $\xi_n s_i \ldots s_{\ell}$ corresponds to $\rho s_i \ldots s_{\ell}$, so the isomorphism restricts to a bijection $\OO_{\xi_n}(w) \to \OO_{\rho}(w)$.
In particular we have $\delta_{\xi_n}(w) = \delta_{\rho}(w)$, and distinct inverted orbits at $\xi_n$ correspond to distinct inverted orbits at $\rho$, which completes the proof.
\end{proof}

\section{Inverted orbits on the Cantor set}\label{sec:inverted-orbits-cantor}

We now simulate the inverted orbits of the previous section by homeomorphisms of a Cantor set.
To this end, we fix an integer $n \in \N$ and consider the Higman--Thompson group $V_{Y_n}$ corresponding to the Cantor set
\[
  \mathfrak{C}_{Y_n} = \mathfrak{C} \times Y_n,
\]
where $Y_n \defeq L_n \times \{1,2,3,4\}$.
Thus $\mathfrak{C}_{Y_n}$ consists of $|Y_n| = 2^{n+2}$ copies of $\mathfrak{C}$, indexed by $Y_n$.
For $y \in Y_n$ we write $\mathfrak{C}_y \defeq \mathfrak{C} \times \{y\}$ for the $y$-th copy.
We think of $Y_n$ as four disjoint sheets of $L_n$ and write $\pr_n \colon Y_n \to L_n$ for the projection onto the first coordinate.
Let us now introduce four types of elements in $V_{Y_n}$, which will be shown in~\cref{sec:generating-V} to generate $V_{Y_n}$.
The generators of types~1, 2 and~3 permute the copies $\mathfrak{C}_y$ among each other, while acting as the identity on the Cantor coordinate.
In particular, these generators preserve the partition $\{\mathfrak{C}_y\}_{y \in Y_n}$.
The generators of type~4 do not preserve this partition, but they still lie in $V_{Y_n}$, being dyadic homeomorphisms of $\mathfrak{C}_{Y_n}$ that correspond to the generators $X_0$ and $X_1$ of Thompson's group $F$.

\medskip
\noindent\textbf{Type 1: The truncated Grigorchuk generators.}
We identify each element $g \in S_n = \{a_n, b_n, c_n, d_n\}$ with the homeomorphism of $\mathfrak{C}_{Y_n}$ that acts diagonally on the four sheets, i.e.\ it maps $\mathfrak{C}_{(x,i)}$ to $\mathfrak{C}_{(xg,i)}$, leaving the Cantor coordinate unchanged.

\medskip
\noindent\textbf{Type 2: The symmetric group over $\eta_n$.}
We let the symmetric group $\Sym(4)$ permute the four copies of $\mathfrak{C}$ corresponding to the set $\{\eta_n\} \times \{1,2,3,4\}$ and act as the identity outside of $\mathfrak{C} \times \{\eta_n\} \times \{1,2,3,4\}$.
The resulting copy of $\Sym(4)$ in $\Homeo(\mathfrak{C}_{Y_n})$ will be denoted by $\Sym(4)_{\eta_n}$.

\medskip
\noindent\textbf{Type 3: The linking transposition.}
Let $\theta_n \defeq \rho_n a_n = 0\,1^{n-1}$ and let $\tau_n \in V_{Y_n}$ be the homeomorphism that transposes the two copies $\mathfrak{C}_{(\rho_n,1)}$ and $\mathfrak{C}_{(\theta_n,1)}$, while being the identity on their Cantor coordinates and outside these copies.

\medskip
\noindent\textbf{Type 4: The two Thompson generators.}
Recall the standard generators $X_0, X_1$ of Thompson's group $\F$ from~\cref{sec:V-background}.
We place copies of them on $\mathfrak{C} \times \{\rho_n\} \times \{2,3,4\}$ as follows.
First, let $X_1^{(n)}$ be the homeomorphism that is supported on $\mathfrak{C}_{(\rho_n,4)}$ and acts there as $X_0$ under the canonical identification $\mathfrak{C}_{(\rho_n,4)} \cong \mathfrak{C}$.
Second, we identify the union
\[
  D_n \defeq \mathfrak{C}_{(\rho_n,2)} \sqcup \mathfrak{C}_{(\rho_n,3)}
\]
with $\mathfrak{C}$ via
\[
  D_n \to \mathfrak{C},\ (\xi, (\rho_n, i)) \mapsto (i-2)\,\xi,
\]
and let $X_0^{(n)}$ be the homeomorphism that is supported on $D_n$ and acts there as $X_0$ under this identification.
The naming reflects the role these generators will play in~\cref{sec:generating-V}.
Although $X_1^{(n)}$ acts as $X_0$ on its own sheet, once it is transported onto the right half of the support of $X_0^{(n)}$, it acts there as $X_0$ on a right half, which is precisely the generator $X_1$ of $\F$.

\begin{figure}
\begin{tikzpicture}[
    scale=0.9,
    every node/.style={font=\small},
    interval/.style={black, line width=0.8pt},
    tick/.style={black, line width=0.6pt},
    diag/.style={gray!60, line width=0.45pt, <->, >=Stealth},
    symbox/.style={draw=gray!65, fill=gray!20, line width=0.7pt},
    taubox/.style={draw=black, fill=gray!80, line width=0.7pt},
    xbox/.style={draw=black, fill=gray!30, line width=0.7pt},
    mainarrow/.style={
    black,
    line width=0.65pt,
    {Stealth[length=1.4mm,width=0.9mm]}-{Stealth[length=1.4mm,width=0.9mm]}
},
    smallarrow/.style={
    black,
    line width=0.65pt,
    -{Stealth[length=1.4mm,width=0.9mm]}
}
]

\foreach \y in {0,-1,-2,-3} {
    \draw[interval] (0,\y) -- (8,\y);

    \foreach \x in {0,...,8} {
       \draw[tick] (\x,\y-0.15) -- (\x,\y+0.15);
    }
}

\foreach \x/\word in {
    0/000,
    1/001,
    2/010,
    3/011,
    4/100,
    5/101,
    6/110,
    7/111
} {
    \node[above] at (\x+0.5,0.35) {\scriptsize $\word$};
}

\foreach \y in {0,-1,-2,-3} {
    \draw[diag] (0.5,\y+0.25) to[bend left=10] (4.5,\y+0.25);
    \draw[diag] (1.5,\y+0.18) to[bend left=10] (5.5,\y+0.18);
    \draw[diag] (2.5,\y+0.11) to[bend left=10] (6.5,\y+0.11);
    \draw[diag] (3.5,\y+0.04) to[bend left=10] (7.5,\y+0.04);
}

\node[
    rotate=90,
    align=center
] at (-0.75,-1.5)
    {$S_n$ acts diagonally\\on the four levels.};

\foreach \y in {0,-1,-2,-3} {
    \draw[symbox] (6.1,\y-0.28) rectangle (6.9,\y+0.28);
}

\node[
    fill=white,
    inner sep=1.5pt
] at (6.6,-1.5)
    {\tiny $\operatorname{Sym}(4)_{\eta_n}$};

\draw[smallarrow]
    (6.1,0) to[out=180,in=180] (6.1,-1);
\draw[smallarrow]
    (6.1,-1) to[out=180,in=180] (6.1,-2);
\draw[smallarrow]
    (6.1,-2) to[out=180,in=180] (6.1,-3);

\node[gray!65!black, below] at (6.5,-3.45)
    {$\eta_n$};

\draw[taubox] (3.1,-0.3) rectangle (3.9,0.3);
\draw[taubox] (7.1,-0.3) rectangle (7.9,0.3);

\draw[mainarrow]
    (3.5,0.32) to[bend left=32] (7.5,0.32);

\node[black, above] at (5.5,1.15)
    {$\tau_n$};

\node[black, below] at (3.5,-3.45)
    {$\theta_n$};

\node[black, below] at (7.5,-0.45)
    {};

\foreach \y in {-1,-2,-3} {
    \draw[xbox] (7.1,\y-0.3) rectangle (7.9,\y+0.3);
}

\node[black] at (8.3,-1.5)
    {$X_0^{(n)}$};

\node[black] at (8.4,-3)
    {$X_1^{(n)}$};

\draw[smallarrow]
    (7.5,-1) to[bend right=45] (7.5,-2);

\draw[smallarrow]
    (7.5,-2) to[bend right=45] (7.5,-1);

\draw[smallarrow]
    (7.65,-3) arc[start angle=0, end angle=355, radius=0.18];


\node[black, below] at (7.5,-3.45)
    {$\rho_n$};

\end{tikzpicture}
\caption{Illustration of the generators in $T_n$ and their supports in $\CC\times Y_n=(\CC\times L_n)\times\{1,2,3,4\}$ for $n=3$. The diagonal action of the Grigorchuk generators in $S_n$ is indicated by the various slightly bent arrows in light gray. The generator $X_0^{(n)}$ acts on the union of the rightmost Cantor sets on levels $2$ and $3$. This is indicated by the pair of arrows to its left; note that $X_0^{(n)}$ does not simply interchange these two Cantor sets. 
For $n=3$ we have $\theta_n=011\in L_n$ and $\eta_n=110\in L_n$ and $\rho_n=111\in L_n$.}
\label{figure: support of generators}
\end{figure}

\begin{notation}\label{not:T_n}
The union of the four types of generators defined above will be denoted by
\[
  T_n \defeq S_n \cup \Sym(4)_{\eta_n} \cup \{\tau_n\} \cup \{(X_0^{(n)})^{\pm 1}, (X_1^{(n)})^{\pm 1}\},
\]
and we write $W_n \defeq \langle T_n \rangle \leq V_{Y_n}$ for the group it generates.
\end{notation}

The four types of generators in $T_n$ are illustrated in~\cref{figure: support of generators}.

\subsection*{Commuting conjugates}

If a homeomorphism $\sigma$ is supported on $\mathfrak{C}_Z$ for some $Z \subseteq Y_n$, then for $g \in S_n^{\ast}$ the conjugate $\sigma^{g} \defeq g^{-1}\sigma g$ is supported on $\mathfrak{C}_{Zg}$.
Applying this to the case where $\sigma$ is a generator of type~2, 3, or 4 we see that the $L_n$-projection of such a conjugate is given by
\[
  \pr_n(\supp(\sigma^{g})) =
  \begin{cases}
    \{\eta_n g\} & \text{for type 2},\\
    \{\theta_n g,\rho_n g\} & \text{for type 3},\\
    \{\rho_n g\} & \text{for type 4}.
  \end{cases}
\]

Recall the value $r_n = 2^n - 2$ from \cref{sec:Grigorchuk-background}.
The following lemma provides us with sufficient conditions under which conjugates of generators of type~2, 3, and 4 by words of length less than $r_n/2$ in $S_n$ commute.

\begin{lemma}\label{lem:commuting-conjugates}
Let $n \in \N$ and let $g, g' \in S_n^{\ast}$ be words of length $|g|_{S_n}, |g'|_{S_n} < r_n/2$.
If $\sigma$ and $\sigma'$ are generators of distinct types from $\{2,3,4\}$, then the conjugates $\sigma^{g}$ and $(\sigma')^{g'}$ commute.
\end{lemma}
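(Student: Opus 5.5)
The plan is to show that the supports of $\sigma^{g}$ and $(\sigma')^{g'}$ are disjoint. Homeomorphisms with disjoint supports commute, so this suffices. Each conjugate is supported on $\mathfrak{C}_{Zg}$ for a set $Z\subseteq Y_n$ whose $L_n$-projection is listed in the case distinction above. It is therefore enough to check that the relevant subsets of $Y_n$ are disjoint. Whenever the projections to $L_n$ are disjoint, the sets themselves are disjoint.

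\textbf{Main step: a separation claim.} The three distinguished vertices $\eta_n$, $\rho_n$, $\theta_n=\rho_n a_n$ cannot be mapped to one another by words of length less than $r_n/2$ in a way that makes the projections collide. Concretely, I would show
\[
\eta_n g\neq \rho_n g',\qquad \eta_n g\notin\{\theta_n g',\rho_n g'\}.
\]
For the first, if $\eta_n g=\rho_n g'$, then $\eta_n g g'^{-1}=\rho_n$. This is a path in $\Gamma_n$ of length $|g|+|g'|<r_n<2^n-1$, contradicting \cref{lem:separation}. For the second, $\theta_n g'=\rho_n a_n g'$ is reached from $\rho_n$ by a word of length less than $r_n/2+1$. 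The same argument then gives a path from $\eta_n$ to $\rho_n$ of length less than $r_n+1=2^n-1$, again contradicting \cref{lem:separation}. This settles the pairs (type 2, type 3) and (type 2, type 4), in either order, since the roles of $g$ and $g'$ are symmetric.

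\textbf{Remaining pair: types 3 and 4.} Here the projections may well intersect, for instance when $g=g'$. So I would pass to the sheet coordinate. The type-3 generator $\tau_n$ is supported on sheet $1$, namely on $\mathfrak{C}\times\{\theta_n,\rho_n\}\times\{1\}$. The type-4 generators are supported on $\mathfrak{C}\times\{\rho_n\}\times\{2,3,4\}$. Conjugation by a word in $S_n$ acts diagonally on sheets and preserves the sheet index. Hence $\supp(\tau_n^{g})\subseteq\mathfrak{C}\times L_n\times\{1\}$ and $\supp((X_i^{(n)})^{g'})\subseteq \mathfrak{C}\times L_n\times\{2,3,4\}$, which are disjoint for all $g,g'$. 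The inverses of $X_i^{(n)}$ have the same supports, so they are covered too.

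The main obstacle is only bookkeeping: writing the paths as words in the involutions $S_n$, so that $g'^{-1}$ also has length $|g'|$, and tracking the strict inequality $|g|+|g'|+1<r_n+1=2^n-1$. I would state these length estimates explicitly, using that each element of $S_n$ is an involution.
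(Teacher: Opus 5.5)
Your proposal is correct and follows the paper's proof. You show the supports are disjoint. For types 3 and 4 you use the sheet separation, and for pairs involving type 2 you use the distance $d_{\Gamma_n}(\rho_n,\eta_n)=2^n-1$ from \cref{lem:separation} to exclude $\eta_n g'\in\{\theta_n g,\rho_n g\}$. Your composed-word paths, which use that $S_n$ consists of involutions, are the paper's triangle-inequality estimate $d_{\Gamma_n}(\rho_n,\eta_n)\le 1+|g|_{S_n}+|g'|_{S_n}<r_n+1$ in another form.
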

\begin{proof}
We will argue by showing that the supports of $\sigma^{g}$ and $(\sigma')^{g'}$ are disjoint so that these elements commute.
Suppose first that $\{\sigma,\sigma'\}$ involves types $3$ and $4$.
Then the supports lie on disjoint sheets, namely sheet $1$ for type~3 and sheets $2,3,4$ for type~4.
Since $S_n$ preserves each sheet, the conjugates remain on disjoint sheets and their supports are disjoint.
Suppose now that $\{\sigma,\sigma'\}$ involves type~2 together with type~3 or type~4.
Here we compare the $L_n$-projections of the two supports.
The type-2 support projects to $\{\eta_n g'\}$ while the type-3 and type-4 supports project to $\{\theta_n g,\rho_n g\}$ and $\{\rho_n g\}$, respectively.
Thus it suffices to show that $\xi_n g \neq \eta_n g'$ for $\xi_n \in \{\theta_n, \rho_n\}$.
Otherwise we would obtain
\begin{align*}
d_{\Gamma_n}(\rho_n, \eta_n)
&\leq d_{\Gamma_n}(\rho_n, \xi_n)
+ d_{\Gamma_n}(\xi_n, \xi_n g)
+ d_{\Gamma_n}(\xi_n g, \eta_n g')
+ d_{\Gamma_n}(\eta_n g', \eta_n)\\
&\leq 1 + |g|_{S_n} + 0 + |g'|_{S_n}\\
&< 1 + r_n/2 + 0 + r_n/2\\
&= r_n +1\\
&= 2^n - 1.
\end{align*}
On the other hand, \cref{lem:separation} tells us that $d_{\Gamma_n}(\rho_n, \eta_n) = 2^n - 1$.
In particular we see that the supports of $\sigma^{g}$ and $(\sigma')^{g'}$ are disjoint.
\end{proof}

\subsection*{A normal form}

Using~\cref{lem:commuting-conjugates}, we obtain a normal form for
words over $T_n$ of length less than $r_n/2$.

\begin{lemma}\label{lem:normal-form}
Let $n \in \N$ and let $w$ be a word over $T_n$ of length $\ell < r_n/2$.
The element of $W_n$ represented by $w$ can be written as
\[
p_1 \cdot p_2 \cdot p_3 \cdot p_4,
\]
where $p_1$ is represented by a word of the form $g_1\dots g_{\ell}$ over $S_n \cup \{1\}$ and the remaining factors have the form
\begin{itemize}
\item $p_2 = (a_1^{(2)})^{\,g_1 g_2 \cdots g_{\ell}} \cdot (a_2^{(2)})^{\,g_2 \cdots g_{\ell}} \cdot \ldots \cdot (a_{\ell}^{(2)})^{\,g_{\ell}}$,
\item $p_3 = (a_1^{(3)})^{\,g_1 g_2 \cdots g_{\ell}} \cdot (a_2^{(3)})^{\,g_2 \cdots g_{\ell}} \cdot \ldots \cdot (a_{\ell}^{(3)})^{\,g_{\ell}}$,
\item $p_4 = (a_1^{(4)})^{\,g_1 g_2 \cdots g_{\ell}} \cdot (a_2^{(4)})^{\,g_2 \cdots g_{\ell}} \cdot \ldots \cdot (a_{\ell}^{(4)})^{\,g_{\ell}}$,
\end{itemize}
where for each $i$ we have $g_i \in S_n \cup \{1\}$ and
\[
  a_i^{(2)} \in \Sym(4)_{\eta_n},
  \qquad
  a_i^{(3)} \in \{\tau_n, 1\},
  \qquad
  a_i^{(4)} \in \{(X_0^{(n)})^{\pm 1}, (X_1^{(n)})^{\pm 1}, 1\}.
\]
\end{lemma}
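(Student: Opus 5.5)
Write the word as $w = t_1 t_2 \cdots t_\ell$ with $t_i \in T_n$. For each position $i$ we record the data $g_i, a_i^{(2)}, a_i^{(3)}, a_i^{(4)}$: exactly one of them is nontrivial and equal to $t_i$, according to the type of $t_i$, and the others are $1$. We then push all generators of type $1$ to the left and collect conjugates.

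\textbf{Step 1: the rewriting identity.} With right actions and $\sigma^{g} = g^{-1}\sigma g$, we have $\sigma h = h\,\sigma^{h}$. Starting from $w = t_1 \cdots t_\ell$, we move each non-type-1 letter $t_i$ to the right past all later type-1 letters. Inductively this gives
\[
w = (g_1 g_2 \cdots g_\ell)\cdot \prod_{i=1}^{\ell} \bigl(a_i\bigr)^{g_{i+1}\cdots g_\ell},
\]
where $a_i = t_i$ if $t_i$ is not of type~1 and $a_i = 1$ otherwise. Concretely, we check that $t_1 \cdots t_\ell = g_1\cdots g_\ell \cdot \prod_i a_i^{g_{i+1}\cdots g_\ell}$ by induction on $\ell$, peeling off the last letter.

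The conjugating words in the statement, $g_i g_{i+1}\cdots g_\ell$, differ from mine by an index shift. This is harmless: when $a_i \neq 1$ we have $g_i = 1$, so $g_i \cdots g_\ell = g_{i+1}\cdots g_\ell$ as elements. I will phrase the induction with the statement's indexing directly to avoid the shift.

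\textbf{Step 2: sorting by type.} Each factor $a_i^{(k)}$ with $k \in \{2,3,4\}$ is conjugated by an element represented by a word of length at most $\ell < r_n/2$ in $S_n$. By \cref{lem:commuting-conjugates}, any two such conjugates of distinct types commute. So the product $\prod_i a_i^{g_i\cdots g_\ell}$ can be reordered by a sequence of adjacent transpositions of factors of different types into $p_2 p_3 p_4$. Factors of the same type never swap, so their relative order, i.e.\ increasing $i$, is preserved, which is exactly the order displayed in the statement. Padding with the trivial entries $a_i^{(k)} = 1$ yields the stated form.

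\textbf{Main obstacle.} The main point to be careful about is the length bound needed to invoke \cref{lem:commuting-conjugates}. The conjugating words $g_i\cdots g_\ell$, with the identity letters $1$ deleted, have length at most $\ell < r_n/2$. The lemma requires words in $S_n^{\ast}$ of length $< r_n/2$, so this hypothesis is satisfied. A second point is the convention of right actions: the lemma's support computation ($\sigma^g$ supported on $\mathfrak{C}_{Zg}$) matches the conjugation convention used here. Once this is consistent, the rest is bookkeeping.
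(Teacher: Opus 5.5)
Your proposal is correct and follows essentially the same route as the paper: write $w=t_1\cdots t_\ell$, split each letter into an $S_n$-part and a type-$2/3/4$ part, push the $S_n$-letters to the left via $a g = g a^{g}$, and then use \cref{lem:commuting-conjugates} to sort the conjugated factors stably by type. That lemma applies because every conjugating suffix has $S_n$-length at most $\ell<r_n/2$. Your remark on the index shift is also right, since $g_i=1$ whenever $a_i\neq 1$; the paper avoids the shift by writing each letter as $a_i g_i$ from the start.
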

\begin{proof}
Write $w = t_1 t_2 \cdots t_{\ell}$ with each $t_i \in T_n$.
For each $i$, set $g_i = t_i$ and $a_i = 1$ if $t_i \in S_n$, and set $g_i = 1$ and $a_i = t_i$ if $t_i \notin S_n$.
Then we can rewrite $w$ as
\begin{equation}\label{eq:rewriting-w}
w = a_1 g_1 a_2 g_2 \cdots a_{\ell} g_{\ell}
    = g_1 g_2 \cdots g_{\ell} \cdot a_1^{\,g_1 g_2 \cdots g_{\ell}} \, a_2^{\,g_2 \cdots g_{\ell}} \cdots a_{\ell}^{\,g_{\ell}}.
\end{equation}
For each $i$ the letter $a_i$ is trivial or a single generator of type $2$, $3$, or $4$.
Accordingly we set
\[
  a_i^{(j)} =
  \begin{cases}
    a_i & \text{if } a_i \text{ is of type } j,\\
    1 & \text{otherwise,}
  \end{cases}
\]
for each $j \in \{2,3,4\}$.
Thus $a_i = a_i^{(2)} a_i^{(3)} a_i^{(4)}$, where at most one factor is non-trivial.
Since the word length of $g_i \cdots g_{\ell}$ with respect to $S_n$ is bounded above by $\ell < r_n/2$ we can apply~\cref{lem:commuting-conjugates} to deduce that any two of the conjugates $(a_i^{(j)})^{\,g_i \cdots g_{\ell}}$ of distinct types $j$ commute.
By substituting $a_i = a_i^{(2)} a_i^{(3)} a_i^{(4)}$ in~\eqref{eq:rewriting-w} we therefore obtain
\begin{align*}
  w
  &= g_1 \cdots g_{\ell}
     \cdot \prod_{i=1}^{\ell} \bigl(a_i^{(2)} a_i^{(3)} a_i^{(4)}\bigr)^{\,g_i \cdots g_{\ell}}\\
  &= g_1 \cdots g_{\ell}
     \cdot \prod_{i=1}^{\ell} (a_i^{(2)})^{\,g_i \cdots g_{\ell}}
    (a_i^{(3)})^{\,g_i \cdots g_{\ell}}
    (a_i^{(4)})^{\,g_i \cdots g_{\ell}}\\
  &= \underbrace{g_1 \cdots g_{\ell}}_{=: \,p_1}
     \cdot \underbrace{\prod_{i=1}^{\ell} (a_i^{(2)})^{\,g_i \cdots g_{\ell}}}_{=: \,p_2}
     \cdot \underbrace{\prod_{i=1}^{\ell} (a_i^{(3)})^{\,g_i \cdots g_{\ell}}}_{=: \,p_3}
     \cdot \underbrace{\prod_{i=1}^{\ell} (a_i^{(4)})^{\,g_i \cdots g_{\ell}}}_{=: \,p_4},
\end{align*}
which proves the claim.
\end{proof}

\subsection*{Counting the pieces}

We now bound the number of elements of $W_n$ that are representable by a word over $T_n$ of length at most $\ell$, for $\ell < r_n/2$.
By~\cref{lem:normal-form}, every such element is determined by the quadruple $(p_1, p_2, p_3, p_4)$ of factors of its normal form.
For $k \in \{1,2,3,4\}$ we write
\[
  P_k(\ell) \defeq \Set{p_k}{p_k \text{ is the $k$-th factor of the normal form of some } w \in T_n^{\ast},\ |w| \leq \ell}.
\]
Our goal is to provide upper bounds for $|P_k(\ell)|$ for each $k$.
This will be done in the following four lemmas, which involve the constant $\alpha_0$ from~\cref{not:alpha0}.

\begin{lemma}\label{lem:count-p1}
There is a constant $C_1 > 0$ such that $|P_1(\ell)| \leq \exp(C_1 \ell^{\alpha_0})$ for every~$\ell$.
\end{lemma}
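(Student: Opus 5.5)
The plan is to identify $P_1(\ell)$ with a subset of a ball in the Grigorchuk group and then invoke \cref{thm:G-subexponential}. By \cref{lem:normal-form}, every element $p_1 \in P_1(\ell)$ is represented by a word $g_1 \cdots g_\ell$ over $S_n \cup \{1\}$. Such a word determines a word over $S_n$ of length at most $\ell$ once the trivial letters are deleted. Its image in $V_{Y_n}$ is the homeomorphism that acts diagonally on the four sheets via the element of $\mathcal{G}_n$ represented by the same word. Note also that the type~1 generators act on $\mathfrak{C}_{Y_n}$ only by permuting the copies indexed by $L_n$ in each sheet, diagonally. Hence the assignment sending a word over $S_n$ to the corresponding element of $V_{Y_n}$ factors through the quotient map $\pi_n \colon \mathcal{G} \to \mathcal{G}_n$.

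Concretely, I would use the composite map $B_S(\ell) \subseteq \mathcal{G} \to \mathcal{G}_n \to V_{Y_n}$. Its first arrow is $\pi_n$ and sends $a,b,c,d$ to $a_n,b_n,c_n,d_n$. Its second arrow is the diagonal embedding of $\mathcal{G}_n \leq \Sym(L_n)$ into $V_{Y_n}$. This composite is a homomorphism that maps the generator $s \in S$ to the type~1 generator $s_n$. Every $p_1 \in P_1(\ell)$ is the image of the element of $\mathcal{G}$ represented by the word $g_1 \cdots g_\ell$ with the trivial letters removed, read over $S$. That element lies in $B_S(\ell)$, so $P_1(\ell)$ is contained in the image of $B_S(\ell)$. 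This gives
\[
|P_1(\ell)| \leq |B_S(\ell)| = \gamma_{\mathcal{G}}^S(\ell) \leq \exp(C \ell^{\alpha_0})
\]
by \cref{thm:G-subexponential}, and we may take $C_1 \defeq C$. The bound is independent of $n$, as the statement requires. For $\ell = 0$ the bound is trivially fine, and the bounded range $\ell < r_n/2$ plays no role here.

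There is no real obstacle. The only point needing care is that the diagonal action on $\mathfrak{C}_{Y_n}$ is a well-defined homomorphic image of $\mathcal{G}$, so that the same word over $S$ gives a well-defined element. This holds because the action on $L_n \times \{1,2,3,4\}$, with the Cantor coordinate fixed, is induced from the action of $\mathcal{G}$ on $L_n$. The remaining detail is the convention on right actions, which does not affect counting.
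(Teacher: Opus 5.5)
Your proposal is correct and follows the paper's argument: both realize $p_1$ as the image of an element of the ball $B_S(\ell)$ in $\mathcal{G}$ under $\pi_n$ (followed by the diagonal embedding of $\mathcal{G}_n$ into $V_{Y_n}$) and then bound $|P_1(\ell)|\le\gamma_{\mathcal{G}}^S(\ell)$ using \cref{thm:G-subexponential}. The only difference is that you spell out the well-definedness of the diagonal action and the irrelevance of the right-action convention, which the paper leaves implicit.
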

\begin{proof}
By~\cref{lem:normal-form}, $p_1$ is represented by a word of the form $g_1\dots g_{\ell}$ over $S_n \cup \{1\}$.
Thus $p_1$ is the image under $\pi_n$ of an element of $\mathcal{G}$ of word length at most $\ell$ over $S$.
From~\cref{thm:G-subexponential} it therefore follows that there is a constant $C_1$ with
\[
|P_1(\ell)| \leq \gamma_{\mathcal{G}}^{S}(\ell) \leq \exp(C_1 \ell^{\alpha_0})
\]
for every $\ell$.
\end{proof}

\begin{lemma}\label{lem:count-p2}
There is a constant $C_2 > 0$ such that $|P_2(\ell)| \leq \exp(C_2 \ell^{\alpha_0})$ for every $n \in \N$ and every $\ell < r_n/2$.
\end{lemma}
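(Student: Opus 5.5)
The plan is to describe $p_2$ through the inverted orbit of the word $g_1\cdots g_\ell$ at $\eta_n$, and then count with \cref{prop:BE-finite}. Each factor $(a_i^{(2)})^{g_i\cdots g_\ell}$ is a permutation of the four copies $\mathfrak{C}\times\{\eta_n g_i\cdots g_\ell\}\times\{1,2,3,4\}$. It acts as the identity on the Cantor coordinate and is trivial elsewhere. Its support therefore lies over the single point $\eta_n g_i\cdots g_\ell$ of $L_n$.

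This point lies in the inverted orbit $\OO_{\eta_n}(g_1\cdots g_\ell)$, computed for the right action of $\mathcal{G}_n$ on $L_n$ with the letters $1$ simply skipped. Factors over distinct points of $L_n$ commute, because their supports are disjoint. Hence $p_2$ equals $\prod_{x\in \OO} \sigma_x$, where $\OO=\OO_{\eta_n}(g_1\cdots g_\ell)$. Each $\sigma_x\in\Sym(4)$ is the ordered product of those factors whose support lies over $x$, and it acts on the four sheets over $x$.

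It follows that $p_2$ is determined by two pieces of data: the set $\OO$, and a map $\OO\to\Sym(4)$. The map may be chosen arbitrarily, since $\Sym(4)$ is a group and we only need an upper bound.

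The counting step comes next. Deleting the trivial letters $g_i=1$ produces a word over $S_n$ of length at most $\ell$ with the same inverted orbit. Since $\ell<r_n/2\le r_n$, \cref{prop:BE-finite} applies with $\xi_n=\eta_n$. It bounds the number of possible sets $\OO$ by $N_{\eta_n}^{S_n}(\ell)\le \exp(C\ell^{\alpha_0})$, and the size of each set by $\Delta_{\eta_n}^{S_n}(\ell)\le C\ell^{\alpha_0}$.

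The number of maps $\OO\to\Sym(4)$ is then at most $24^{C\ell^{\alpha_0}}$. Multiplying the two counts gives
\[
|P_2(\ell)|\le \exp(C\ell^{\alpha_0})\cdot 24^{C\ell^{\alpha_0}}=\exp\bigl(C(1+\log 24)\ell^{\alpha_0}\bigr).
\]
So $C_2=C(1+\log 24)$ works, and it is independent of $n$. The case $\ell=0$ is trivial.

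The main point needing care is the identification of the conjugate's support. One must check that, with right actions, $\sigma^{g}=g^{-1}\sigma g$ is supported over $\eta_n g$. The paper records exactly this in its discussion of commuting conjugates.

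One must also check that the orbit indexing matches \cref{def:inverted-orbit}, namely that the suffixes $g_i\cdots g_\ell$ are the ones applied to $\eta_n$. Beyond that, the only issue is the truncation to $\ell<r_n/2$, and this is already handled by \cref{prop:BE-finite}.
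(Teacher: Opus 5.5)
Your proof is correct and follows essentially the paper's argument. The paper likewise places each conjugate $(a_i^{(2)})^{g_i\cdots g_\ell}$ in the copy of $\Sym(4)$ over $\eta_n g_i\cdots g_\ell$, which it phrases as $p_2$ lying in the base group of $\Sym(4)\wr_{L_n}\mathcal{G}_n$. It then records $p_2$ as the inverted orbit $\OO_{\eta_n}(g_1\cdots g_\ell)$ together with one element of $\Sym(4)$ per point, and bounds both counts with \cref{prop:BE-finite} to get $\exp(C\ell^{\alpha_0})\cdot 24^{C\ell^{\alpha_0}}$. Your explicit treatment of the letters $g_i=1$ and of the case $\ell=0$ is a small extra bit of care that the paper leaves implicit.
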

\begin{proof}
By~\cref{lem:normal-form}, the factor $p_2$ of a word of length $\ell < r_n/2$ over $T_n$ is a product of the form
\begin{equation}\label{eq:p_2}
(a_1^{(2)})^{g_1 \cdots g_{\ell}} \cdots (a_{\ell}^{(2)})^{g_{\ell}},
\end{equation}
where $g_i \in S_n \cup \{1\}$ and $a_i^{(2)} \in \Sym(4)_{\eta_n}$.
In particular, $p_2$ is a product of type-1 and type-2 generators, which permute the copies $\mathfrak{C}_y$ while acting trivially on the Cantor coordinate.
We can therefore think of $p_2$ as an element of the symmetric group $\Sym(Y_n)$.
Under this identification, the type-1 generators act on the first coordinate of $Y_n = L_n \times \{1,2,3,4\}$ and the type-2 generators permute the subset $\eta_n \times \{1,2,3,4\} \subseteq Y_n$.
Thus the subgroup of $\Sym(Y_n)$ that is generated by the type-1 and type-2 generators is precisely the permutational wreath product
\[
\Sym(4) \wr_{L_n} \mathcal{G}_n = \Bigl( \prod_{x \in L_n} \Sym(4)_x \Bigr) \rtimes \mathcal{G}_n,
\]
where $\Sym(4)_x \defeq \Sym(\{x\} \times \{1,2,3,4\})$.
In this language, the conjugates $(a_i^{(2)})^{g_i \cdots g_{\ell}}$ from~\eqref{eq:p_2} lie in the factor $\Sym(4)_x$ for $x = \eta_n\,(g_i \cdots g_{\ell})$.
The element $p_2$ is therefore contained in the base group $\prod_{x \in L_n} \Sym(4)_x$ and its support, i.e.\ the coordinates $x \in L_n$ at which $p_2$ is non-trivial, is a subset of the inverted orbit
\[
  \OO_{\eta_n}(w) = \{\eta_n,\ \eta_n g_{\ell},\ \eta_n g_{\ell-1}g_{\ell},\ \ldots,\ \eta_n g_1 \cdots g_{\ell}\}
\]
of the word $w = g_1 \cdots g_{\ell}$ at $\eta_n$.
Thus $p_2$ is determined by
one element of $\Sym(4)$ for each $x \in \OO_{\eta_n}(w)$.
By~\cref{prop:BE-finite} there is a constant~$C>0$ such that for all $\ell < r_n/2$ the cardinality of $\OO_{\eta_n}(w)$ is bounded above by $C \ell^{\alpha_0}$.
Moreover,~\cref{prop:BE-finite} tells us that there are at most $\exp(C \ell^{\alpha_0})$ inverted orbits $\OO_{\eta_n}(u)$ where $u$ is a word of length at most~$\ell$ over $S_n$.
Since for each of these inverted orbits there are at most
\[
|\Sym(4)|^{|\OO_{\eta_n}(w)|} \leq \exp(C \ell^{\alpha_0} \log(24))
\]
choices of group elements, we can choose a constant $C_2$ such that
\[
|P_2(\ell)| \leq \exp(C \ell^{\alpha_0}) \cdot \exp(C \ell^{\alpha_0} \log(24)) \leq \exp(C_2 \ell^{\alpha_0})
\]
for all $\ell < r_n/2$.
\end{proof}

\begin{lemma}\label{lem:count-p3}
There is a constant $C_3 > 0$ such that $|P_3(\ell)| \leq \exp(C_3 \ell^{\alpha_0} \log \ell)$ for every $n \in \N$ and every $1 < \ell \leq r_n/3$.
\end{lemma}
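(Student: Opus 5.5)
We bound $|P_3(\ell)|$ by mimicking the proof of \cref{lem:count-p2}, now for the transposition $\tau_n$. Here $p_3$ is a product of type-1 and type-3 generators, so we view it as a permutation in $\Sym(Y_n)$. Since $\tau_n$ lives on sheet~$1$ and $S_n$ preserves sheets, we may regard $p_3$ as an element of $\Sym(L_n)$ acting on sheet $1$. Each conjugate $(a_i^{(3)})^{g_i\cdots g_\ell}$ is either trivial or the transposition of $\theta_n u_i$ and $\rho_n u_i$, where $u_i = g_i\cdots g_\ell$. Because $\theta_n=\rho_n a_n$, this is the transposition of $\rho_n a_n u_i$ and $\rho_n u_i$.

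The key observation is that both points lie in a small set determined by inverted orbits. The point $\rho_n u_i$ lies in $\OO_{\rho_n}(w)$ for $w=g_1\cdots g_\ell$. The point $\rho_n a_n u_i$ lies in $\OO_{\rho_n}(a_n w)$; equivalently, it lies in $\OO_{\theta_n}(w)$, and $\theta_n$ is within distance $1$ of $\rho_n$ in $\Gamma_n$.

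To stay within the range of \cref{prop:BE-finite}, we use the word $w'=a_nw$, which has length $\ell+1$. The hypothesis $\ell\le r_n/3$ (with $\ell>1$) guarantees $\ell+1\le r_n$. Then
\[
\OO_{\rho_n}(w)\cup\{\rho_n a_n u_i\}_i\subseteq \OO_{\rho_n}(w)\cup\OO_{\rho_n}(a_n w)=: \Omega_w .
\]
Note that $\OO_{\rho_n}(a_nw)=\OO_{\rho_n}(w)\cup\{\rho_n a_n w\}$, so this union is essentially one inverted orbit. Hence $p_3$ is a permutation supported on $\Omega_w$, and \cref{prop:BE-finite} gives $|\Omega_w|\le C(\ell+1)^{\alpha_0}+1$.

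Counting proceeds as before. First, the number of possible sets $\Omega_w$ is at most the number of inverted orbits $\OO_{\rho_n}(u)$ with $|u|\le \ell+1$, which is at most $\exp(C(\ell+1)^{\alpha_0})$ by \cref{prop:BE-finite}. Second, for a fixed set $\Omega_w$, the element $p_3$ is a permutation of $\Omega_w$. The number of such permutations is at most $|\Omega_w|!\le |\Omega_w|^{|\Omega_w|}$, which is
\[
\exp\bigl(O(\ell^{\alpha_0})\cdot \log(O(\ell^{\alpha_0}))\bigr)\le \exp(C'\ell^{\alpha_0}\log\ell)
\]
for $\ell>1$. This is the source of the extra $\log\ell$ factor: unlike in \cref{lem:count-p2}, the local groups are no longer bounded but are full symmetric groups on the orbit. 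Multiplying the two bounds and absorbing constants yields $C_3$.

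The main obstacle is the bookkeeping of which points $p_3$ can move. One must check that, after conjugating by $u_i$, both endpoints of each transposition lie in a single set of size $O(\ell^{\alpha_0})$ that is itself controlled by an inverted orbit, so that the count of such sets is also subexponential. The detour through $\theta_n=\rho_na_n$ handles this: the $\theta_n$-orbit is absorbed into the inverted orbit of the word $a_nw$ at $\rho_n$, at the cost of replacing $\ell$ by $\ell+1$, which is where the restriction $\ell\le r_n/3$ is used.
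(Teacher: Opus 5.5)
\textbf{There is a genuine gap at the key inclusion.} You claim that $\rho_n a_n u_i\in\OO_{\rho_n}(a_nw)$ for every $i$, and this is false. An inverted orbit only records the points obtained from \emph{suffixes} of the word. Prepending $a_n$ to $w=g_1\cdots g_\ell$ creates exactly one new suffix, namely $a_nw$ itself. As you note yourself, $\OO_{\rho_n}(a_nw)=\OO_{\rho_n}(w)\cup\{\rho_na_nw\}$. So this set contains $\theta_nu_1$, but in general none of the points $\theta_nu_i=\rho_na_nu_i$ with $i\geq 2$.

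The word ``equivalently'' also hides the problem. The set that does contain these points is $\OO_{\theta_n}(w)$, which differs from $\OO_{\rho_n}(a_nw)$. Moreover, \cref{prop:BE-finite} gives no control over inverted orbits based at $\theta_n$, neither their size nor their number.

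Here is a concrete counterexample. Take $n\geq 4$ and the $T_n$-word $b_n\,\tau_n\,b_n$, of length $3\leq r_n/3$.
\begin{itemize}
\item Its normal form has $p_3=\tau_n^{b_n}$. This is the transposition of the copies over $\rho_nb_n=1^n$ and $\theta_nb_n=001^{n-2}$.
\item Here $w=b_n\cdot 1\cdot b_n$, so your set is $\Omega_w=\{1^n,01^{n-1}\}$, which misses $001^{n-2}$.
\end{itemize}
Thus $p_3$ is not supported on $\Omega_w$, and your count does not bound $|P_3(\ell)|$.

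\textbf{The missing idea} is to put all the points $\theta_nu_i$ and $\rho_nu_i$ into a single inverted orbit based at $\rho_n$. You do this by inserting $a_n$ in front of \emph{every} $g_i$ and cancelling it afterwards. The paper uses
\[
w'=a_ng_1a_n\cdot a_ng_2a_n\cdots a_ng_{\ell-1}a_n\cdot a_ng_\ell .
\]
\begin{itemize}
\item The suffix starting at the $a_n$ before $g_i$ yields $\rho_na_ng_i\cdots g_\ell=\theta_nu_i$, using $a_n^2=1$.
\item The suffix starting at $g_i$ yields $\rho_nu_i$.
\end{itemize}
This word has length $3\ell-1$. The requirement $3\ell-1\leq r_n$ for applying \cref{prop:BE-finite} is exactly where the hypothesis $\ell\leq r_n/3$ comes from. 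Your version would only have needed $\ell+1\leq r_n$, which should have been a warning sign.

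With $\OO_{\rho_n}(w')$ in place of $\Omega_w$, the rest of your counting goes through as in the paper. There are at most $\exp(C(3\ell)^{\alpha_0})$ possible orbits, and for each at most $m!$ permutations with $m\leq C(3\ell)^{\alpha_0}$.
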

\begin{proof}
By~\cref{lem:normal-form}, the factor $p_3$ of a word of length $\ell \leq r_n/3$ over $T_n$ is a product of the form
\begin{equation}\label{eq:p_3}
  (a_1^{(3)})^{g_1 \cdots g_{\ell}} \cdots (a_{\ell}^{(3)})^{g_{\ell}},
\end{equation}
where $g_i \in S_n \cup \{1\}$ and $a_i^{(3)} \in \{\tau_n, 1\}$.
In particular, $p_3$ is a product of type-1 and type-3 generators, which permute the copies $\mathfrak{C}_y$ while acting trivially on the Cantor coordinate.
Since these conjugates act trivially on the Cantor coordinate, we now regard them as the induced permutations of the index set $Y_n$.
The support of the permutation induced by the conjugate $(a_i^{(3)})^{g_i \cdots g_{\ell}}$ from~\eqref{eq:p_3} is either empty or $\{\theta_n g_i \cdots g_{\ell},\rho_n g_i \cdots g_{\ell}\} \times \{1\}$.
We can therefore think of $p_3$ as an element of $\Sym(L_n) \cong \Sym(L_n \times \{1\})$.
In this interpretation, the conjugate $(a_i^{(3)})^{g_i \cdots g_{\ell}}$ is, when non-trivial, the transposition
\[
  (\rho_n\ \theta_n)^{g_i \cdots g_{\ell}} = \bigl(\rho_n\,(g_i \cdots g_{\ell})\ \ \theta_n\,(g_i \cdots g_{\ell})\bigr)
\]
in $\Sym(L_n)$.
Setting $w = g_1 \cdots g_{\ell}$, the element $p_3$ is therefore a permutation whose support is contained in the union of the inverted orbits
\[
\OO_{\rho_n}(w) = \{\rho_n,\ \rho_n g_{\ell},\ \rho_n g_{\ell-1}g_{\ell},\ \ldots,\ \rho_n g_1 \cdots g_{\ell}\}
\]
and
\[
\OO_{\theta_n}(w) = \{\theta_n,\ \theta_n g_{\ell},\ \theta_n g_{\ell-1}g_{\ell},\ \ldots,\ \theta_n g_1 \cdots g_{\ell}\}.
\]
Let us now consider the word
\[
w' \defeq a_n g_1 a_n \cdot a_n g_2 a_n \cdot \ldots \cdot a_n g_{\ell-1} a_n \cdot a_n g_{\ell},
\]
which is of length $3 \ell -1$ over $S_n \cup \{1\}$.
Using that $a_n^2 = 1$, we see that the inverted orbit $\OO_{\rho_n}(w')$ contains the elements
\begin{align*}
& \rho_n a_n g_i a_n \cdot a_n g_{i+1} a_n \cdot \ldots \cdot a_n g_{\ell-1} a_n \cdot a_n g_{\ell}\\
=\ &\theta_n g_i a_n \cdot a_n g_{i+1} a_n \cdot \ldots \cdot a_n g_{\ell-1} a_n \cdot a_n g_{\ell}\\
=\ &\theta_n g_i g_{i+1} \cdot \ldots \cdot g_{\ell-1} g_{\ell}
\end{align*}
for each $i$.
By considering the suffixes of $w'$ starting at $g_i$, we also see that
$\rho_n g_i \cdots g_\ell \in \OO_{\rho_n}(w')$ for every
$1 \leq i \leq \ell$.
Hence the support of $p_3$ is contained in $\OO_{\rho_n}(w')$. 
From~\cref{prop:BE-finite} we know that there is a constant $C > 0$ such that 
\[
|\OO_{\rho_n}(w')| \leq C\,(3\ell)^{\alpha_0}
\]
for $3\ell - 1 \leq r_n$.
Thus $p_3$ lies in the symmetric group $\Sym(\OO_{\rho_n}(w'))$, whose cardinality is bounded above by $\lfloor C\,(3\ell)^{\alpha_0} \rfloor!$.
Moreover,~\cref{prop:BE-finite} tells us that there are at most $\exp\bigl(C (3\ell)^{\alpha_0}\bigr)$ inverted orbits $\OO_{\rho_n}(u)$, where $u$ is a word of length at most $3\ell-1$ over $S_n \cup \{1\}$.
The cardinality of $P_3(\ell)$ can therefore be bounded above by
\begin{equation}\label{eq:P_3-inequality}
|P_3(\ell)| \leq \lfloor C\,(3\ell)^{\alpha_0} \rfloor! \cdot \exp\bigl(C (3\ell)^{\alpha_0}\bigr).
\end{equation}
Using the estimate $m! \leq m^m = \exp(m \log m)$ with $m = \lfloor C\,(3\ell)^{\alpha_0} \rfloor$ we obtain
\[
\lfloor C\,(3\ell)^{\alpha_0} \rfloor!
\leq \exp\bigl(\lfloor C\,(3\ell)^{\alpha_0} \rfloor \cdot \log( \lfloor C\,(3\ell)^{\alpha_0} \rfloor )\bigr).
\]
As the factor $\exp(C (3\ell)^{\alpha_0})$ in~\eqref{eq:P_3-inequality} is dominated by $\lfloor C\,(3\ell)^{\alpha_0} \rfloor!$ we conclude that there is a constant $C_3 > 0$ with
\[
|P_3(\ell)| \leq \exp(C_3 \ell^{\alpha_0} \log \ell)
\]
for $1 < \ell \leq r_n/3$ and all $n$.
\end{proof}

\begin{lemma}\label{lem:count-p4}
There is a constant $C_4 > 0$ such that $|P_4(\ell)| \leq \exp(C_4 \ell^{\alpha_0} \log \ell)$ for every $n \in \N$ and every $1 < \ell < r_n/2$.
\end{lemma}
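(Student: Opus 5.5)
Every conjugate appearing in $p_4$ is supported above a single point of the inverted orbit $\OO_{\rho_n}(g_1\cdots g_\ell)$. The plan is to group the conjugates by that point and to use that $X_0^{(n)}$ and $X_1^{(n)}$ commute. This reduces the count to a choice of inverted orbit together with a bounded amount of data attached to each point of the orbit.

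\emph{Step 1: localize the conjugates.} Let $p_4$ be the fourth factor of the normal form (\cref{lem:normal-form}) of a word of length $\ell<r_n/2$, and put $w=g_1\cdots g_\ell$. The type-1 generators permute the copies $\mathfrak{C}_y$, act trivially on the Cantor coordinate, and preserve the sheet index. Hence a non-trivial conjugate $(a_i^{(4)})^{g_i\cdots g_\ell}$ is the homeomorphism obtained by transporting $a_i^{(4)}$ from the fibre $\mathfrak{C}\times\{\rho_n\}\times\{2,3,4\}$ to the fibre over $x_i\defeq\rho_n g_i\cdots g_\ell$, via the canonical identification. In particular it is supported over $\{x_i\}\times\{2,3,4\}$, and $x_i\in\OO_{\rho_n}(w)$. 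Conjugates attached to distinct points $x\neq x'$ have disjoint supports and commute. We may therefore reorder the product so that, for each $x\in\OO_{\rho_n}(w)$, all factors with $x_i=x$ are collected; within each group the relative order is kept. This gives $p_4=\prod_{x\in\OO_{\rho_n}(w)} h_x$, where $h_x$ is the transport to the fibre over $x$ of an element $h'_x\in\langle X_0^{(n)},X_1^{(n)}\rangle$. The element $h'_x$ is a product of $m_x$ generators, where $m_x$ is the number of indices $i$ with $x_i=x$ and $a_i^{(4)}\neq 1$. Note that $\sum_x m_x\le \ell$.

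\emph{Step 2: the key observation.} One has to avoid counting $h'_x$ as an arbitrary element of a ball in Thompson's group $\F$, which would give a bound exponential in $\ell$. The point is that $X_1^{(n)}$ is supported on $\mathfrak{C}_{(\rho_n,4)}$, while $X_0^{(n)}$ is supported on $D_n=\mathfrak{C}_{(\rho_n,2)}\sqcup\mathfrak{C}_{(\rho_n,3)}$. These supports are disjoint, so the two generators commute and $\langle X_0^{(n)},X_1^{(n)}\rangle$ is abelian, a quotient of $\Z^2$. Hence $h'_x=(X_0^{(n)})^{e_x}(X_1^{(n)})^{f_x}$ with $|e_x|,|f_x|\le m_x\le\ell$, and there are at most $(2\ell+1)^2$ possibilities for each $h_x$.

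\emph{Step 3: counting.} The element $p_4$ is thus determined by two pieces of data:
\begin{itemize}
\item the inverted orbit $\OO_{\rho_n}(w)$, for which there are at most $\exp(C\ell^{\alpha_0})$ choices by \cref{prop:BE-finite} (valid since $\ell\le r_n$);
\item an assignment of a pair $(e_x,f_x)$ to each of the at most $C\ell^{\alpha_0}$ points of the orbit, again by \cref{prop:BE-finite}.
\end{itemize}
This yields
\[
|P_4(\ell)|\le \exp(C\ell^{\alpha_0})\cdot(2\ell+1)^{2C\ell^{\alpha_0}}=\exp\bigl(C\ell^{\alpha_0}+2C\ell^{\alpha_0}\log(2\ell+1)\bigr).
\]
For $\ell>1$ the right-hand side is at most $\exp(C_4\ell^{\alpha_0}\log\ell)$ for a suitable constant $C_4$ independent of $n$.

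The only delicate point is Step 2: one must notice that the two Thompson generators have disjoint supports, so that no exponential contribution arises at a single point of the orbit. A second, smaller point is checking in Step 1 that the conjugation by $g_i\cdots g_\ell$ transports the Cantor-coordinate action canonically. This is what makes all contributions at a fixed $x$ lie in one common copy of $\langle X_0^{(n)},X_1^{(n)}\rangle$.
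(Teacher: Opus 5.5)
Your proposal is correct and follows essentially the same route as the paper. The paper packages your Steps 1 and 2 by identifying the group generated by the type-1 and type-4 generators with the permutational wreath product $\Z^2\wr_{L_n}\mathcal{G}_n$, so that $p_4$ lies in the base group with support in $\OO_{\rho_n}(w)$; it then counts the inverted orbits and allows $(2\ell+1)^2$ choices per point, exactly as you do, and your explicit regrouping by fibres is a hands-on version of that identification.
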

\begin{proof}
By~\cref{lem:normal-form}, the factor $p_4$ of a word of length $\ell < r_n/2$ over $T_n$ is a product of the form
\begin{equation}\label{eq:p_4}
  (a_1^{(4)})^{g_1 \cdots g_{\ell}} \cdots (a_{\ell}^{(4)})^{g_{\ell}},
\end{equation}
where $g_i \in S_n \cup \{1\}$ and $a_i^{(4)} \in \{(X_0^{(n)})^{\pm 1}, (X_1^{(n)})^{\pm 1}, 1\}$.
The supports of $X_0^{(n)}$ and $X_1^{(n)}$ are contained in $\mathfrak{C}_{\{\rho_n\} \times \{2,3\}}$ and $\mathfrak{C}_{(\rho_n,4)}$, respectively.
In particular, the supports are disjoint and the elements $X_0^{(n)}$, $X_1^{(n)}$ commute.
Since $X_0^{(n)}$ and $X_1^{(n)}$ are moreover of infinite order they generate a subgroup isomorphic to $\Z^2$ whose support lies in $\mathfrak{C}_{\{\rho_n\} \times \{2,3,4\}}$.
As the type-1 generators only act on the first coordinate of $Y_n = L_n \times \{1,2,3,4\}$, we deduce that the group generated by the type-1 and type-4 generators can be identified with the permutational wreath product
\[
  \Z^2 \wr_{L_n} \mathcal{G}_n = \Bigl( \prod_{x \in L_n} A_x \Bigr) \rtimes \mathcal{G}_n ,
\]
where $A_x$ is the copy of $\Z^2$ at the coordinate $x$.
Under this identification, each of the factors $(a_i^{(4)})^{g_i \cdots g_{\ell}}$ from~\eqref{eq:p_4} lies in $A_x$ for $x = \rho_n\,(g_i \cdots g_{\ell})$.
The element $p_4$ is therefore contained in the base group $\prod_{x \in L_n} A_x$ and its support is a subset of the inverted orbit
\begin{equation}\label{eq:p_4-inverted-orbit}
\OO_{\rho_n}(w) = \{\rho_n,\ \rho_n g_{\ell},\ \ldots,\ \rho_n g_1 \cdots g_{\ell}\}
\end{equation}
of $w = g_1 \cdots g_{\ell}$ at $\rho_n$.
Thus $p_4$ is determined by one element $z_x$ of $A_x \cong \Z^2$ for each $x \in \OO_{\rho_n}(w)$, where the word length of $z_x$ with respect to the standard basis of $\Z^2$ is bounded above by $\ell$.
Hence the number of such elements is bounded above by
\[
((2\ell + 1)^2)^{\abs{\OO_{\rho_n}(w)}}.
\]
From~\cref{prop:BE-finite} we know that there is a constant $C > 0$ such that
\[
|\OO_{\rho_n}(w)| \leq C \ell^{\alpha_0}
\]
for every $n$ and every $\ell \leq r_n$.
Moreover,~\cref{prop:BE-finite} tells us that there are at most $\exp(C \ell^{\alpha_0})$ inverted orbits $\OO_{\rho_n}(u)$, where $u$ is a word of length at most $\ell$ over $S_n \cup \{1\}$.
Combining these estimates, we obtain
\[
|P_4(\ell)| \leq ((2\ell + 1)^2)^{C \ell^{\alpha_0}} \cdot \exp(C \ell^{\alpha_0})
= \exp\bigl(2 C \ell^{\alpha_0} \log(2\ell + 1)\bigr) \cdot \exp(C \ell^{\alpha_0}).
\]
Since $\exp(C \ell^{\alpha_0})$ is dominated by $\exp\bigl(2 C \ell^{\alpha_0} \log(2\ell + 1)\bigr)$ we can choose $C_4 > 0$ large enough to deduce that $|P_4(\ell)| \leq \exp(C_4 \ell^{\alpha_0} \log \ell)$ for every $n$ and every $1 < \ell < r_n/2$.
\end{proof}

\section{The generated homeomorphism group is Thompson's group $V$}\label{sec:generating-V}

The goal of this section is to show that the group $W_n$ defined in~\cref{not:T_n} is isomorphic to Thompson's group $V$.
The group $W_n$ acts on the disjoint union of $|Y_n|=4\cdot 2^n$ Cantor sets
\begin{equation}\label{eq: partition of Cantor set}
  \CC_{Y_n}=\bigsqcup_{(x,i)\in Y_n=L_n\times\{1,\dots,4\}} \CC_{(x,i)},
\end{equation}
which we regard as a clopen partition of the (standard) Cantor set~$\CC$. Thompson's groups~V, F, and~T act on~$\CC$. 

\begin{remark}
By definition, $\Sym(4)_{\eta_n}$ is contained in $W_n$. Since $\mathcal{G}_n$ acts transitively on each $L_n$, we obtain immediately by conjugation with elements in $\mathcal{G}_n$ that $\Sym(4)_x\le W_n$ for every $x\in L_n$. We will use this repeatedly in the sequel. 
\end{remark}

\begin{lemma}\label{lem: Thompson F in H}
The group $W_n$ contains $\RiSt_F\bigl(\CC_{(\rho_n,1)}\cup \dots\cup  \CC_{(\rho_n,4)}\bigr)$.
\end{lemma}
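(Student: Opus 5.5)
We plan to build up $\RiSt_F$ of the four sheets over $\rho_n$ from copies of $F$ on pairs of adjacent sheets. Throughout we identify $\CC_{(\rho_n,1)}\cup\dots\cup\CC_{(\rho_n,4)}$ with $\CC$ via $(\xi,(\rho_n,i))\mapsto w_i\xi$, where $w_1,w_2,w_3,w_4=00,01,10,11$. This identification respects the lexicographic order. Under it, $D_n$ corresponds to the dyadic interval $[1/4,3/4]$, and the identification of $D_n$ with $\CC$ used to define $X_0^{(n)}$ is the affine order-preserving one. Hence "$\RiSt_F$" makes sense for unions of consecutive sheets, read as rigid stabilizers of dyadic intervals.

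\textbf{Step 1: a copy of $F$ on $D_n$.} By the Remark, $\Sym(4)_{\rho_n}\le W_n$. Let $\pi\in\Sym(4)_{\rho_n}$ swap sheets $3$ and $4$ over $\rho_n$. Then the conjugate of $X_1^{(n)}$ by $\pi$ is supported on $\CC_{(\rho_n,3)}$, which is the right half $1\CC$ of $D_n\cong\CC$. On that half it acts as $X_0$, so it is exactly $1X_0=X_1$ in the coordinates of $D_n$. Together with $X_0^{(n)}$, this shows $W_n$ contains $\langle X_0,X_1\rangle=F$ acting on $D_n$. This is $\RiSt_F$ of the interval $[1/4,3/4]$.

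\textbf{Step 2: transport to other pairs.} Conjugating by the order-preserving relabelings $(\rho_n,2)\mapsto(\rho_n,1)$, $(\rho_n,3)\mapsto(\rho_n,2)$ in $\Sym(4)_{\rho_n}$ gives $\RiSt_F([0,1/2])\le W_n$. The analogous shift gives $\RiSt_F([1/2,1])\le W_n$. Since all these permutations act trivially on Cantor coordinates and preserve the order of the two sheets involved, the transported copies of $F$ are again rigid stabilizers in $F$ for our fixed identification.

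\textbf{Step 3: gluing overlapping intervals.} This is the main step. We will prove an $F$-analogue of \cref{lem:rigid-stabilizer-union}: for dyadic $a<b<c<d$,
\[
\langle \RiSt_F([a,c]),\RiSt_F([b,d])\rangle=\RiSt_F([a,d]).
\]
Applying this first to $[0,1/2]$ and $[1/4,3/4]$, and then to $[0,3/4]$ and $[1/2,1]$, gives $\RiSt_F([0,1])=F$ on the four sheets, which proves the claim.

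To prove the gluing statement, take $f\in\RiSt_F([a,d])$ and pick a dyadic $p\in(b,c)$.
\begin{itemize}
\item Since $F$ acts transitively on dyadic points of an open interval, choose $h\in\RiSt_F([b,d])$ with $h(f(p))=p$. If $f(p)<b$, first move $f(p)$ into $(b,c)$ by an element of $\RiSt_F([a,c])$.
\item Then $hf$ fixes $p$ and restricts to elements of $\RiSt_F([a,p])$ and $\RiSt_F([p,d])$.
\item Both pieces are dyadic PL maps with breakpoints allowed at $p$. The first lies in $\RiSt_F([a,c])$ and the second in $\RiSt_F([b,d])$.
\end{itemize}

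The delicate point is whether these two restrictions genuinely lie in $F$, i.e.\ whether the slope mismatch at $p$ is allowed. It is, because elements of $F$ may have breakpoints at dyadic rationals with power-of-$2$ slopes. We also need transitivity of $\RiSt_F$ of a dyadic interval on the dyadic points of its interior, which is standard from \cite{CannonFloydParry1996}. Verifying these facts, and keeping the identifications of Step 2 consistent, is where we expect the main care to be needed.
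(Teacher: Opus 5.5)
Your proof is correct. Steps~1 and~2 are the paper's: $X_0^{(n)}$ and the $(3,4)$-conjugate of $X_1^{(n)}$ generate the copy of $F$ on $[\frac14,\frac34]$, and conjugation in $\Sym(4)_{\rho_n}$ gives the copies on $[0,\frac12]$ and $[\frac12,1]$.

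The routes differ in Step~3. The paper finishes with a direct finite check. $X_1$ is already supported on $[\frac12,1]$, and $X_0$ factors explicitly as
\[
X_0=X_0^{[\frac12,1]}X_0^{[\frac14,\frac34]}X_0^{[0,\frac12]},
\]
verified by tracking the four pieces $[0,\frac18],[\frac18,\frac14],[\frac14,\frac12],[\frac12,1]$. So the three copies contain both generators of $F$.

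You instead prove an $F$-analogue of \cref{lem:rigid-stabilizer-union} for overlapping dyadic intervals and apply it twice. The idea is to move $f(p)$ back to a dyadic $p\in(b,c)$ and then cut at that fixed point. Your argument is more conceptual and reusable, since there is no factorization to find or check. The price is two standard facts about $F$ that the paper's computation avoids:
\begin{itemize}
\item $\RiSt_F$ of a dyadic interval acts transitively on the dyadic points of its interior. You need this also for intervals such as $[0,\frac34]$, whose length is not a power of $2$, and you use implicitly that $f(p)$ is dyadic.
\item Restricting an element of $F$ that fixes a dyadic point $p$ to either side of $p$, and extending by the identity, stays in $F$.
\end{itemize}
The paper's verification, by contrast, needs nothing beyond the definition of $X_0$.

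One small slip: $\RiSt_F([b,d])$ fixes $b$, so your case split should read $f(p)\le b$ rather than $f(p)<b$. Alternatively, if $f(p)\in(a,c)$, move it to $p$ directly with an element of $\RiSt_F([a,c])$; otherwise $f(p)\in[c,d)\subseteq(b,d)$ and you use $\RiSt_F([b,d])$.
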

\begin{proof}
The generator $X_0^{(n)}\in \RiSt_F\bigl(\CC_{(\rho_n,2)}\cup \CC_{(\rho_n,3)}\bigr)\cap T_n$ corresponds to
the standard generator~$X_0$ of $F$ under the identification of $\CC_{(\rho_n,2)}$ as the left half interval and
$\CC_{(\rho_n,3)}$ as the right half interval of the standard Cantor set. 
The generator $X_1^{(n)}\in \RiSt_F\bigl(\CC_{(\rho_n,4)}\bigr)\cap T_n$ corresponds to $X_0$ under the identification 
of $\CC_{(\rho_n, 4)}$ with the standard Cantor set. The conjugate of $X_1^{(n)}$ by $(3,4)\in\Sym(4)_{\rho_n}$ is supported on $\CC_{(\rho_n,3)}$, and corresponds to the second standard generator~$X_1$ of~$F$ under the identification of $\RiSt_F\bigl(\CC_{(\rho_n,2)}\cup \CC_{(\rho_n,3)}\bigr)$ with $F$ acting on $\CC_{(\rho_n,2)}\cup \CC_{(\rho_n,3)}$.
Thus, $\RiSt_F\bigl(\CC_{(\rho_n,2)}\cup \CC_{(\rho_n,3)}\bigr)=\langle X_0^{(n)}, (3,4)X_1^{(n)}(3,4)\rangle\le W_n$. 
By conjugation with elements in $\Sym(4)_{\rho_n}$ we obtain two more copies of $F$ inside $W_n$, namely 
\[ \RiSt_F\bigl(\CC_{(\rho_n,1)}\cup \CC_{(\rho_n,2)}\bigr)\le W_n~\text{ and }~\RiSt_F\bigl(\CC_{(\rho_n,3)}\cup \CC_{(\rho_n,4)}\bigr)\le W_n.\]
We regard the three overlapping sets $\CC_{(\rho_n,1)}\cup \CC_{(\rho_n,2)}$, $\CC_{(\rho_n,2)}\cup \CC_{(\rho_n,3)}$ and  $\CC_{(\rho_n,3)}\cup \CC_{(\rho_n,4)}$ as the intervals $[0,\frac{1}{2}]$, $[\frac{1}{4},\frac{3}{4}]$ and $[\frac{1}{2},1]$ in the standard Cantor set.
The standard generator $X_1$ of~$F$ is supported on $[\frac{1}{2},1]$.
In particular, it is an element of $\RiSt_F\bigl(\CC_{(\rho_n,3)}\cup \CC_{(\rho_n,4)}\bigr)$. 
The standard generator $X_0$ can be written as a composition of elements supported on $[0,\frac{1}{2}]$, $[\frac{1}{4},\frac{3}{4}]$ and $[\frac{1}{2},1]$ as indicated in~\cref{figure: Thompson generator}. Therefore, 
$\RiSt_F\bigl(\CC_{(\rho_n,1)}\cup \dots\cup  \CC_{(\rho_n,4)}\bigr)$ is generated by the three $F$-copies $\RiSt_F\bigl(\CC_{(\rho_n,1)}\cup \CC_{(\rho_n,2)}\bigr)$, $\RiSt_F\bigl(\CC_{(\rho_n,2)}\cup\CC_{(\rho_n,3)}\bigr)$ and $\RiSt_F\bigl(\CC_{(\rho_n,3)}\cup \CC_{(\rho_n,4)}\bigr)$, which completes the proof. \qedhere

\begin{figure}
\begin{tikzpicture}[
    scale=0.8,
    x=11cm,
    y=1.15cm,
    >={Latex[length=2mm]},
    interval/.style={line width=2pt, line cap=butt},
    pieceA/.style={interval, black},
    pieceB/.style={interval, gray!70},
    pieceC/.style={interval, gray!45},
    pieceD/.style={interval, gray!20},
    tick/.style={line width=0.4pt},
    every node/.style={font=\small},
    smallarrow/.style={
    black,
    line width=0.65pt,
    -{Stealth[length=1.4mm,width=0.9mm]}}
]

\draw[pieceA] (0,0) -- ({1/8},0);
\draw[pieceB] ({1/8},0) -- ({1/4},0);
\draw[pieceC] ({1/4},0) -- ({1/2},0);
\draw[pieceD] ({1/2},0) -- (1,0);

\draw[pieceA] (0,-1) -- ({1/4},-1);
\draw[pieceB] ({1/4},-1) -- ({3/8},-1);
\draw[pieceC] ({3/8},-1) -- ({1/2},-1);
\draw[pieceD] ({1/2},-1) -- (1,-1);

\draw[pieceA] (0,-2) -- ({1/4},-2);
\draw[pieceB] ({1/4},-2) -- ({1/2},-2);
\draw[pieceC] ({1/2},-2) -- ({5/8},-2);
\draw[pieceD] ({5/8},-2) -- (1,-2);

\draw[pieceA] (0,-3) -- ({1/4},-3);
\draw[pieceB] ({1/4},-3) -- ({1/2},-3);
\draw[pieceC] ({1/2},-3) -- ({3/4},-3);
\draw[pieceD] ({3/4},-3) -- (1,-3);

\draw[smallarrow] ({8.5/8},-0.15) -- ({8.5/8},-0.85)
    node[midway,right] {$X_0^{[0,\frac{1}{2}]}$};

\draw[smallarrow] ({8.5/8},-1.15) -- ({8.5/8},-1.85)
    node[midway,right] {$X_0^{[\frac{1}{4},\frac{3}{4}]}$};

\draw[smallarrow] ({8.5/8},-2.15) -- ({8.5/8},-2.85)
    node[midway,right] {$X_0^{[\frac{1}{2},1]}$};

\node[above] at ({1/16},0) {$A$};
\node[above] at ({3/16},0) {$B$};
\node[above] at ({3/8},0) {$C$};
\node[above] at ({3/4},0) {$D$};

\node[below] at ({1/8},-3.35) {$A$};
\node[below] at ({3/8},-3.35) {$B$};
\node[below] at ({5/8},-3.35) {$C$};
\node[below] at ({7/8},-3.35) {$D$};

\foreach \x/\label in {
    0/0,
    {1/8}/{\frac18},
    {1/4}/{\frac14},
    {1/2}/{\frac12},
    1/1
} {
    \draw[tick] (\x,-0.08) -- (\x,-0.18);
    \node[below] at (\x,-0.18) {$\label$};
}

\foreach \x/\label in {
    0/0,
    {1/4}/{\frac14},
    {3/8}/{\frac38},
    {1/2}/{\frac12},
    1/1
} {
    \draw[tick] (\x,-1.08) -- (\x,-1.18);
    \node[below] at (\x,-1.18) {$\label$};
}

\foreach \x/\label in {
    0/0,
    {1/4}/{\frac14},
    {1/2}/{\frac12},
    {5/8}/{\frac58},
    1/1
} {
    \draw[tick] (\x,-2.08) -- (\x,-2.18);
    \node[below] at (\x,-2.18) {$\label$};
}

\foreach \x/\label in {
    0/0,
    {1/4}/{\frac14},
    {1/2}/{\frac12},
    {3/4}/{\frac34},
    1/1
} {
    \draw[tick] (\x,-3.08) -- (\x,-3.18);
    \node[below] at (\x,-3.18) {$\label$};
}

\end{tikzpicture}

\caption{The factorization $X_0=X_0^{[\frac{1}{2},1]}X_0^{[\frac{1}{4},\frac{3}{4}]}X_0^{[0,\frac{1}{2}]}$ of the standard generator $X_0$ of Thompson's group~F. The four colors track the images of the four initial pieces
$A=[0,\frac18]$, $B=[\frac18,\frac14]$, $C=[\frac14,\frac12]$, and $D=[\frac12,1]$.}
\label{figure: Thompson generator}
\end{figure}
\end{proof}

\begin{lemma}\label{lem: copy of V in H}
The group $W_n$ contains $\RiSt_V\bigl(\CC_{(\rho_n,1)}\cup \dots\cup  \CC_{(\rho_n,4)}\bigr)$.
\end{lemma}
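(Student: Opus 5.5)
The plan is to show that the rigid stabilizer in $V$ of $U\defeq\CC_{(\rho_n,1)}\cup\dots\cup\CC_{(\rho_n,4)}$ is generated by the copy of $F$ from \cref{lem: Thompson F in H} together with the permutations in $\Sym(4)_{\rho_n}\le W_n$ (see the remark at the beginning of this section). Identify $U$ with the standard Cantor set, with the four sheets $\CC_{(\rho_n,i)}$ as the quarter cylinders $00\CC,01\CC,10\CC,11\CC$. Under this identification $\RiSt_V(U)$ becomes a copy of $V$, $\RiSt_F(U)$ becomes $F$, and $\Sym(4)_{\rho_n}$ becomes the group of permutations of the four quarter cylinders, acting trivially on the tail coordinate. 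It therefore suffices to prove the purely Thompson-theoretic statement that $V=\langle F,\ \text{permutations of } 00\CC,01\CC,10\CC,11\CC\rangle$.

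The first step is to show that the subgroup $H$ generated by $F$ and these permutations contains every element of $V$ that permutes the cylinders of some dyadic partition set $A$ rigidly, i.e.\ $w\xi\mapsto f(w)\xi$ with $f\colon A\to A$ a bijection. Given such an $A$ with $|A|\ge 2$, there is an element $\phi\in F$ that maps the cylinders of $A$ in order onto the cylinders of a fixed standard partition $A_k$ of the same cardinality. This is standard: $F$ acts transitively on ordered dyadic partitions of equal size. Conjugating by $\phi$ reduces the claim to rigid permutations of $A_k$, and since these are generated by transpositions of adjacent cylinders, we only need one adjacent transposition of $A_k$ in $H$. I would choose $A_k$ so that its first two cylinders are subcylinders, one each, of $00\CC$ and $01\CC$, for instance $A_k=\{000,01,1,\dots\}$ suitably refined. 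Then the following composite is a transposition of two adjacent cylinders of $A_k$: first map $00\CC$ and $01\CC$ by $F$-elements so that the two target cylinders fill these quarters, then apply the transposition $(1,2)\in\Sym(4)_{\rho_n}$, then undo the $F$-elements. Transitivity under $F$ then transports this to any adjacent pair.

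The second step uses the standard fact that every $\gamma\in V$ factors as $\gamma=\pi\circ\phi$. Here $\phi\in F$ is the order-preserving map from the domain partition $A$ to the range partition $B$, and $\pi$ is a rigid permutation of the cylinders of $B$. Both factors lie in $H$ by the first step, so $\RiSt_V(U)\le W_n$.

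The main obstacle is getting the adjacent transposition inside $H$ rigorously. The permutations in $\Sym(4)_{\rho_n}$ act only on whole quarters, so the conjugation must make two adjacent cylinders of $A_k$ coincide exactly with two full quarters. This requires choosing $A_k$ and the conjugating element carefully, and checking that the conjugate acts as the identity on the tail coordinates. An alternative route is to note that the transpositions of quarters together with $F$ yield the known generating set $\{X_0,X_1,\pi_0,\pi_1\}$ of $V$ from Cannon--Floyd--Parry, up to conjugation by elements of $F$.
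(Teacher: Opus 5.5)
Your proof has a genuine gap in Step 1. It is the pair of claims ``we only need one adjacent transposition of $A_k$ in $H$'' and ``transitivity under $F$ then transports this to any adjacent pair''. The rigid permutations of $A_k$ form a copy of $\Sym(k)$, which needs all $k-1$ adjacent transpositions. $F$ cannot move one of them to another. Elements of $F$ are order-preserving and fix $0^\infty$ and $1^\infty$, so an element of $F$ carrying one ordered dyadic partition rigidly onto another sends the $i$-th cylinder to the $i$-th cylinder. In particular, no element of $F$ maps $c_1\CC\ni 0^\infty$ onto $c_i\CC$ for $i>1$.

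This is not a technicality. Your composite is an $F$-conjugate of the quarter transposition $(1,2)$, and both $(1,2)$ and all of $F$ fix $1^\infty$. So everything your argument produces lies in the stabilizer of $1^\infty$, a proper subgroup of $V$, and as written it would ``prove'' $\langle F,(1,2)\rangle=V$. Your fallback via Cannon--Floyd--Parry does not avoid this either. Their generating set of $V$ is $\{X_0,X_1,C,\pi_0,\pi_1\}$, so you would also have to produce the order-three rotation $C\in T$.

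The repair is to use more of $\Sym(4)_{\rho_n}\le W_n$, which you already have.
\begin{itemize}
\item Refining every cylinder into its two halves does not change a rigid permutation, so you may assume $k\ge 3$. List $A=(c_1,\dots,c_k)$ from left to right.
\item For $2\le i\le k-2$, choose $\psi\in F$ sending $c_1,\dots,c_{i-1}$ onto a dyadic partition of $00\CC$, $c_i\CC$ rigidly onto $01\CC$, $c_{i+1}\CC$ rigidly onto $10\CC$, and $c_{i+2},\dots,c_k$ onto a dyadic partition of $11\CC$. Then $\psi^{-1}\circ(2,3)\circ\psi$ is the rigid transposition of $c_i\CC$ and $c_{i+1}\CC$.
\item For $i=1$, use $(1,2)$ with $c_1\CC,c_2\CC$ sent onto $00\CC,01\CC$ and the remaining cylinders onto a partition of $1\CC$.
\item For $i=k-1$, use $(3,4)$ with $c_{k-1}\CC,c_k\CC$ sent onto $10\CC,11\CC$ and the remaining cylinders onto a partition of $0\CC$.
\end{itemize}
With this correction your two steps give a valid, entirely elementary proof.

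It differs from the paper's route. The paper adds the fixed-point-free $4$-cycle $(1,2,3,4)$ to $\RiSt_F(U)$ and uses the maximality of $F$ in $T$ to obtain $\RiSt_T(U)$. It then adds $(3,4)\notin T$ and uses the maximality of $T$ in $V$, both results taken from Belk--Bleak--Quick--Skipper. The paper's argument is two lines but outsources the work to these maximality theorems. Yours, once fixed, uses only the transitivity of $F$ on ordered dyadic partitions of equal size and the factorization of each element of $V$ as a rigid permutation composed with an element of $F$.
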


\begin{proof}
The permutation $(1,2,3,4)\in\Sym(4)_{\rho_n}$, which cyclically permutes the pieces $\CC_{(\rho_n,i)}$, $i\in\{1,\dots, 4\}$, and 
the group $\RiSt_F\bigl(\CC_{(\rho_n,1)}\cup \dots\cup  \CC_{(\rho_n,4)}\bigr)$ generate the group $\RiSt_T\bigl(\CC_{(\rho_n,1)}\cup \dots\cup  \CC_{(\rho_n,4)}\bigr)$.
Indeed, this follows from the maximality of $F$ in $T$~\cite[Section~1]{BelkBleakQuickSkipper2024} and the fact that $(1,2,3,4)$, being fixed-point free, does not lie in $F$.
Likewise, we deduce from the maximality of $T$ in $V$~\cite[Theorem~3.7]{BelkBleakQuickSkipper2024}
that the transposition $(3,4)\in\Sym(4)_{\rho_n}$ and the group
$\RiSt_T\bigl(\CC_{(\rho_n,1)}\cup \dots\cup  \CC_{(\rho_n,4)}\bigr)$ generate $\RiSt_V\bigl(\CC_{(\rho_n,1)}\cup \dots\cup  \CC_{(\rho_n,4)}\bigr)$.
\end{proof}

\begin{theorem}\label{thm: the W group is V}
The group $W_n$ is isomorphic to Thompson's group~V.
\end{theorem}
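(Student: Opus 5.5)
The plan is to show that $W_n = V_{Y_n}$. Since $\CC_{Y_n}$ is a disjoint union of $2^{n+2}$ Cantor sets, we have $V_{Y_n} = V_{2,2^{n+2}} \cong V$: identify $\CC_{Y_n}$ with $\CC$ via a dyadic partition set of $\CC$ with $2^{n+2}$ elements, for instance $L_{n+2}$. The inclusion $W_n \le V_{Y_n}$ is clear, because every generator in $T_n$ is a dyadic homeomorphism of $\CC_{Y_n}$. So the work lies in the reverse inclusion $V_{Y_n} \le W_n$, which we prove by spreading the copy of $V$ from \cref{lem: copy of V in H} across all sheets using \cref{lem:rigid-stabilizer-union}.

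\emph{Step 1: local copies over every vertex.} Set $U_x \defeq \bigsqcup_{i=1}^4 \CC_{(x,i)}$ for $x \in L_n$. By \cref{lem: copy of V in H}, $\RiSt_{V_{Y_n}}(U_{\rho_n}) \le W_n$. (Here $\RiSt_V$ of a clopen set is the same as $\RiSt_{V_{Y_n}}$, since an element of $V_{Y_n}$ supported on $U_{\rho_n}$ is a dyadic homeomorphism of $U_{\rho_n}$.) For every $g \in \mathcal{G}_n$, the type-1 element $g$ maps $U_x$ onto $U_{xg}$ while preserving the Cantor coordinate. Hence conjugation by $g$ carries $\RiSt(U_{\rho_n})$ onto $\RiSt(U_{\rho_n g})$. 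By spherical transitivity of $\mathcal{G}$, we get $\RiSt_{V_{Y_n}}(U_x) \le W_n$ for every $x \in L_n$.

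\emph{Step 2: gluing.} \Cref{lem:rigid-stabilizer-union} needs overlapping clopen sets, but the sets $U_x$ are pairwise disjoint, so we manufacture overlaps. The linking transposition $\tau_n$ swaps $\CC_{(\rho_n,1)}$ and $\CC_{(\theta_n,1)}$. Conjugating $\RiSt(U_{\rho_n})$ by $\tau_n$ gives $\RiSt(U')$ with
\[
U' = \CC_{(\theta_n,1)} \cup \CC_{(\rho_n,2)} \cup \CC_{(\rho_n,3)} \cup \CC_{(\rho_n,4)}.
\]
The set $U'$ meets $U_{\rho_n}$, and it also meets $U_{\theta_n}$. Two applications of \cref{lem:rigid-stabilizer-union} then give
\[
\RiSt(U_{\rho_n} \cup U_{\theta_n}) \le W_n.
\]
Next, for any edge $\{x, xs\}$ of the Schreier graph $\Gamma_n$, \cref{lem:pair-transitivity} provides $g \in \mathcal{G}_n$ with $\{\rho_n g, \theta_n g\} = \{x, xs\}$. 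This applies because $\rho_n, \theta_n$ are $a_n$-neighbours, i.e.\ they differ only in the first letter, and every edge of $\Gamma_n$ joins points differing in exactly one letter. For $a$-edges that letter is the first one, at $\delta$-distance $1$. Other edges change a later letter, so their $\delta$-distance is smaller, and this needs more care; see the fallback below. Conjugating by such $g$ gives $\RiSt(U_x \cup U_{xs}) \le W_n$ for every edge. Since $\Gamma_n$ is a connected path, repeated use of \cref{lem:rigid-stabilizer-union} along it yields
\[
\RiSt\Bigl(\bigcup_x U_x\Bigr) = V_{Y_n} \le W_n.
\]

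\emph{Main obstacle.} The delicate point is the gluing step: transporting the pair $\{\rho_n, \theta_n\}$ to the edges of the path. If \cref{lem:pair-transitivity} does not directly give all edges, I will instead use only conjugates $\{\rho_n g, \theta_n g\} = \{\rho_n g, \rho_n g a_n^{g}\}$. These are the pairs $\{y, y a_n'\}$ for $a$-conjugates, and such pairs connect all of $L_n$, since they include all pairs differing in the first letter together with their images under $\mathcal{G}_n$. Alternatively, I will argue directly: the graph on $L_n$ whose edges are the pairs $\{\rho_n g, \theta_n g\}$, $g \in \mathcal{G}_n$, is $\mathcal{G}_n$-invariant, nonempty, and connected, because $\mathcal{G}_n$ is transitive and generated by involutions fixing or swapping endpoints. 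Connectivity of this graph is all the union argument requires.
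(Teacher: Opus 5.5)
Your reduction is the paper's: the identification $V_{Y_n}\cong V$ (your direct one via $L_{n+2}$ is fine), conjugating \cref{lem: copy of V in H} by $\tau_n$ and gluing to get $\RiSt(U_{\rho_n}\cup U_{\theta_n})\le W_n$, then spreading by $\mathcal{G}_n$ and gluing along a connected graph. The step that fails is your primary gluing route along the path $\Gamma_n$. The non-$a$ edges do not merely ``need more care'': they are never translates of $\{\rho_n,\theta_n\}$. Every element of $\mathcal{G}_n$ acts by tree automorphisms and therefore preserves $\delta$. But $\delta(\rho_n,\theta_n)=1$, while a non-loop $b$-, $c$- or $d$-edge joins two vertices with the same first letter, which are at $\delta$-distance less than $1$. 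The $a$-edges form a perfect matching, so for $n\geq 2$ the path $\Gamma_n$ contains $2^{n-1}-1\geq 1$ such edges. This route therefore cannot be completed and should be dropped.

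Your first fallback is the correct repair and is exactly the paper's argument, provided you say where the inclusion comes from. By \cref{lem:pair-transitivity}, every pair of vertices with different first letters has $\delta$-distance $1$, hence is of the form $\{\rho_n g,\theta_n g\}$. These pairs alone form the complete bipartite graph $K_{2^{n-1},2^{n-1}}$ between the two halves of $L_n$, which is connected.

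Your second fallback is not a proof as written, for two reasons. First, a nonempty $\mathcal{G}_n$-invariant graph on a transitive $\mathcal{G}_n$-set can be disconnected: the pairs at $\delta$-distance $e^{-1}$ give a graph whose components are the two halves. Second, $b_n$ and $c_n$ do not fix $\theta_n$. It does become a valid and more elementary alternative if you use what is actually true: every $s\in S_n$ maps $\rho_n$ into $\{\rho_n,\theta_n\}$, since $\rho_n a_n=\theta_n$ and $b_n,c_n,d_n$ fix $1^n$. Let $K$ be the component of $\rho_n$. For each $s\in S_n$, $Ks$ is the component containing $\rho_n s\in K$, so $Ks=K$. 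Hence $K$ is $\mathcal{G}_n$-invariant and equals $L_n$ by transitivity. This version avoids \cref{lem:pair-transitivity} entirely and uses only spherical transitivity.
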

\begin{proof}
Recall that the transposition $\tau_n \in T_n$ interchanges the two copies $\CC_{(\rho_n,1)}$ and $\CC_{(\theta_n,1)}$, where $\theta_n = 01^{n-1}$ and $\rho_n = 1^n$, and fixes every other copy.
By conjugating the inclusion from~\cref{lem: copy of V in H} with $\tau_n$ we obtain
\[
\RiSt_V\bigl(\CC_{(\theta_n, 1)}\cup \CC_{(\rho_n, 2)}\cup \CC_{(\rho_n, 3)}\cup \CC_{(\rho_n, 4)}\bigr)\le W_n.
\]
Since $\mathcal{G}_n \leq W_n$ acts transitively on $L_n$, there is an element $g \in \mathcal{G}_n$ with $\rho_n g = \theta_n$.
Thus we can conjugate the inclusion from~\cref{lem: copy of V in H} with $g$ to deduce
\[
\RiSt_V\bigl(\CC_{(\theta_n, 1)}\cup \CC_{(\theta_n, 2)}\cup \CC_{(\theta_n, 3)}\cup \CC_{(\theta_n, 4)}\bigr)\le W_n.
\]
By~\cref{lem:rigid-stabilizer-union} we thus obtain $\RiSt_V(U)\le W_n$ for 
\[ U=\bigcup_{i=1}^4\CC_{(\rho_n,i)}\cup\CC_{(\theta_n, i)}.\]
The set $\mathcal{U}=\{Ug\mid g\in \mathcal{G}_n\}$ is a clopen covering of~$\CC_{Y_n}$.
The conjugation of $\RiSt_V(U)$ by $h\in \mathcal{G}_n$ is $\RiSt_V(Uh)$. So~\cref{lem:rigid-stabilizer-union} implies the statement provided the nerve of~$\mathcal{U}$ is connected. 
For the latter, it suffices to prove that the graph $C$ on the vertex set $L_n$ whose edges are $\mathcal{G}_n$-translates of the edge $(\theta_n, \rho_n)$ is connected.
To this end, recall the metric $\delta$ on $L_n$ from~\cref{sec:Grigorchuk-background}, given by $\delta(x,y) = e^{-\ell(x,y)}$, where $\ell(x,y)$ denotes the length of the maximal common prefix of $x$ and $y$.
By~\cref{lem:pair-transitivity}, the diagonal $\mathcal{G}_n$-action on $L_n \times L_n$ is transitive on pairs of points of the same $\delta$-distance.
Because of $\delta(0x, \rho_n) = \delta(\theta_n,\rho_n)=1$ for every $x\in \{0,1\}^{n-1}$, the right-most leaf $\rho_n$ of the finite binary tree $L_n$ has an edge to every leaf in the left half of the tree. Repeating this argument for other vertices, we obtain that the graph is the complete bipartite graph $K_{2^{n-1}, 2^{n-1}}$. In particular, it is connected.
This proves $W_n = V_{Y_n}$.
Finally, by a result of Higman~\cite{Hig74}, we have $V_{Y_n} \cong V$, which completes the proof.
\end{proof}

\section{Non-uniform exponential growth of Thompson's group~$V$}

In this section we complete the proof that Thompson's group~$V$ has non-uniform exponential growth.
The fact that $V$ has exponential growth is well known and follows, for example, from the (uniform) exponential growth of its subgroup $F$~\cite{delaHarpe2002}.
It therefore remains to show that the infimal  exponential growth rate of $V$ satisfies $\omega(V)=1$. 

\begin{proof}[Proof of Theorem~\ref{thm:main}]
By~\cref{thm: the W group is V} there is an isomorphism between $W_n$ and Thompson's group~$V$.
Let $E_n\subset V$ be the image of the generating set $T_n\subset W_n$ (see~\cref{not:T_n}) under the isomorphism.
In particular, we have 
\[ \gamma_{W_n}^{T_n}(\ell)=\gamma_{V}^{E_n}(\ell)\]
for every $\ell>0$. 
As before, we write $r_n = 2^n-2$.
So by~\cref{lem:normal-form} (normal form for elements in~$W_n$) we obtain that
\[
\gamma_V^{E_n}(\ell)\le |P_1(\ell)|\cdot |P_2(\ell)|\cdot |P_3(\ell)|\cdot |P_4(\ell)|
\]
for every $\ell < r_n/2$.
By~\cref{lem:count-p1},~\cref{lem:count-p2}, \cref{lem:count-p3} and~\cref{lem:count-p4} there exist constants $C_i>0$ for $i\in\{1,2,3,4\}$ such that for all $n \ge 1$ and all $1 < \ell < r_n/3$ we have
\begin{align*}
    |P_1(\ell)|&\le \exp\bigl(C_1\ell^{\alpha_0}\bigr)\\
    |P_2(\ell)|&\le \exp\bigl(C_2\ell^{\alpha_0}\bigr)\\
    |P_3(\ell)|&\le \exp\bigl(C_3\ell^{\alpha_0}\log(\ell)\bigr)\\
    |P_4(\ell)|&\le \exp\bigl(C_4\ell^{\alpha_0}\log(\ell)\bigr),
 \end{align*}
where $\alpha_0$ is the constant from~\cref{not:alpha0}.
Let $C=4\cdot\max\{C_1,\dots,C_4\}$ and fix $\alpha\in (\alpha_0,1)$.
Then there exists $\ell_0 \in \N$ with
\[
\gamma_V^{E_n}(\ell)\le \exp\bigl(C \ell^\alpha\bigr)
\]
for $n \in \N$ and $\ell_0 \leq \ell < r_n/3$.
We conclude that
\begin{equation*}
 \omega(V)\le \inf_{n\in\N}\omega(V, E_n)
                    \le \inf_{n\in\N}\inf_{\ell\in\N} \bigl(\gamma_V^{E_n}(\ell)\bigr)^{1/\ell}\\
                    \le \inf_{n \in \N}\, \inf_{\ell_0 \leq \ell < r_n/3} \exp\bigl(C\ell^\alpha\bigr)^{1/\ell}=1.\qedhere
\end{equation*}
\end{proof}

\section{Generating sets for automorphism groups of free groups}\label{sec:aut}

The goal of this section is to construct generating sets for automorphism groups of free groups that yield the growth estimates stated in~\cref{thm:mainestimate}.
These generating sets are modeled on those constructed for Thompson's group~$V$ in the preceding sections.
For a finite set $Y$ we write $F(Y)$ to denote the free group with basis $\{u_y\mid y\in Y\}$.
A \emph{signed basis permutation} of $F(Y)$ is an automorphism of the form
\[
  u_y\mapsto u_{\sigma(y)}^{\epsilon_y},
  \qquad
  \sigma\in\Sym(Y),\quad \epsilon_y\in\{\pm1\}.
\]
The group of all signed basis permutations is called the
\emph{monomial subgroup} of $\Aut(F(Y))$ and will be denoted by $M(Y)$.
We have \[M(Y)\cong (\Z/2)^Y\rtimes \Sym(Y).\] 
For distinct $y,z\in Y$, let $\lambda_{y,z}\in\Aut(F(Y))$ be the \emph{Nielsen transvection}
\[
  u_y\mapsto u_yu_z,
  \qquad
  u_{y'}\mapsto u_{y'}
  \quad\text{for }y'\neq y.
\]
The following fact is well-known~\cite[Chapter I, Section 4]{LyndonSchupp01}.

\begin{theorem}\label{thm:nielsen-generating-aut-free}
For a finite set $Y$, the group $\Aut(F(Y))$ is generated by all signed basis permutations and all Nielsen transvections.
\end{theorem}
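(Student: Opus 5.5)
This is Nielsen's classical theorem, and the paper cites \cite{LyndonSchupp01} for it. I would reproduce the standard reduction argument: take an arbitrary automorphism and simplify it by Nielsen moves until it becomes a signed basis permutation. Fix $\varphi\in\Aut(F(Y))$ and consider the tuple $(\varphi(u_y))_{y\in Y}$, which is a basis of $F(Y)$.

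The elementary Nielsen moves on a tuple are:
\begin{itemize}
\item permuting entries,
\item inverting an entry,
\item replacing $v_y$ by $v_yv_z$ for some $z\neq y$.
\end{itemize}
Each move is realised by precomposing $\varphi$ with a signed basis permutation or a Nielsen transvection $\lambda_{y,z}$. The variants $v_y\mapsto v_y v_z^{-1}$, $v_zv_y$ and $v_z^{-1}v_y$ are also available. They are obtained by conjugating $\lambda_{y,z}$ or its inverse with suitable sign changes, since $\lambda_{y,z}^{-1}$ maps $u_y\mapsto u_yu_z^{-1}$, and left multiplication arises from inverting $u_y$, multiplying on the right, and inverting back.

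It therefore suffices to show that any basis can be transformed into $\{u_y^{\pm1}\}$, up to order, by these moves. I would use the standard Nielsen reduction procedure. Take the total length $\sum_y|v_y|$ as the complexity. Whenever the tuple is not Nielsen reduced, there is a pair $v_y,v_z^{\pm1}$ whose product cancels at least half of one factor, and the corresponding move does not increase the complexity. Together with a secondary ordering in the usual way, this strictly reduces the complexity in a well-founded order, so eventually the tuple is Nielsen reduced.

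The key step, and the main obstacle, is the cancellation lemma for Nielsen reduced sets: in any product $w_1\cdots w_k$ of elements from such a set with no adjacent inverse pairs, each $w_i$ keeps a nonempty middle part. Consequently the length of the product is at least $k$, and every element of length one of $F(Y)$ lies in the tuple up to inversion. Since the reduced tuple still generates $F(Y)$, each $u_y$ must appear in it up to sign. By cardinality, the tuple is then a signed permutation of the standard basis.

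Composing the inverses of the moves used expresses $\varphi$ as a product of signed basis permutations and Nielsen transvections. Rather than redoing the delicate middle-part bookkeeping, I would cite~\cite[Chapter I, Propositions 2.5--2.8 and Section 4]{LyndonSchupp01} for it.
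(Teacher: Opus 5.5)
Your proposal is correct and takes essentially the same route as the paper. The paper gives no argument of its own and cites \cite[Chapter I, Section 4]{LyndonSchupp01}, where the theorem is proved by exactly this Nielsen reduction of the image basis $(\varphi(u_y))_{y\in Y}$, together with the length estimate for products from an N-reduced set (with the secondary ordering covering the exact-half-cancellation case) and your final cardinality argument that the reduced tuple is a signed permutation of $\{u_y\}$.
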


As in the preceding sections, we write
\[
Y_n=L_n\times\{1,2,3,4\}.
\]
Thus $F(Y_n)$ is a free group of rank $4\cdot 2^n$ and hence isomorphic to $F_{4\cdot 2^n}$.
We now define a finite subset
\[
\mathcal{A}_n \subseteq \Aut(F(Y_n)),
\]
which will be shown to generate $\Aut(F(Y_n))$.
Its construction is modeled on that of the generating set $T_n$ for Thompson's group~$V$ and is again organized into four types.

\begin{itemize}
\item\textbf{Type 1: The truncated Grigorchuk generators.}
For every generator $s_n\in S_n$ of $\mathcal{G}_n$, we denote -- by the same symbol -- the automorphism of $F(Y_n)$ given by
\[
  u_{(x,i)}\mapsto u_{(x s_n,i)}
\]
for every $(x,i)\in Y_n$. 

\item\textbf{Type 2: The monomial group over $\eta_n$.}
Let $M_{\eta_n}(Y_n)\leq \Aut(F(Y_n))$ be the group of all signed permutations of the four basis elements
\[
  u_{(\eta_n,1)},\ldots,u_{(\eta_n,4)}
\]
that fix all other basis elements.
We call $M_{\eta_n}(Y_n)$ the \emph{monomial group of the fiber over $\eta_n$}.
It is isomorphic to $(\Z/2\Z)^4\rtimes\Sym(4)$. 

\item\textbf{Type 3: The linking transposition.}
Let $\tau_n$ be the basis transposition swapping
$u_{(\rho_n,1)}$ and $u_{(\theta_n,1)}$ 
and fixing all other basis elements.

\item\textbf{Type 4: The Nielsen transvection.}
Let $\nu_n$ be the Nielsen transvection
\[
  \nu_n=\lambda_{(\rho_n,2),(\rho_n,3)}.
\]
\end{itemize}
\begin{definition}\label{def:Tn}
We define $
  \mathcal{A}_n\defeq S_n\cup M_{\eta_n}(Y_n)\cup\{\tau_n,\nu_n\}
  \subseteq \Aut(F(Y_n))$.
\end{definition}

\begin{lemma}\label{lem:full-signed-monomial}
The subgroup $\langle \mathcal{A}_n\rangle$ contains the monomial subgroup $M(Y_n)$. 
\end{lemma}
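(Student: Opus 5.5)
The plan is to show that $\langle \mathcal{A}_n\rangle$ contains all sign changes and all basis transpositions, since these together generate $M(Y_n)\cong(\Z/2)^{Y_n}\rtimes\Sym(Y_n)$.

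\emph{Step 1: monomial groups over every fiber.} Conjugating $M_{\eta_n}(Y_n)$ by the type-1 generators in $S_n$ gives, for any $g\in\mathcal{G}_n$, the monomial group $M_{\eta_n g}(Y_n)$ of the fiber over $\eta_n g$. The action of $\mathcal{G}_n$ on $L_n$ is transitive. Hence $\langle\mathcal{A}_n\rangle$ contains $M_x(Y_n)$ for every $x\in L_n$. In particular it contains the sign change $u_y\mapsto u_y^{-1}$ for each $y\in Y_n$, so it contains the full sign group $(\Z/2)^{Y_n}$. It also contains every transposition of two basis elements lying in a common fiber $\{x\}\times\{1,2,3,4\}$.

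\emph{Step 2: transpositions across fibers.} It now suffices to show that the permutation part generates $\Sym(Y_n)$. Consider the subgroup of $\langle\mathcal{A}_n\rangle$ generated by the type-1, type-2 and type-3 elements. It acts on $Y_n$ by unsigned permutations and contains:
\begin{itemize}
\item all transpositions inside each fiber;
\item $\tau_n=((\rho_n,1)\ (\theta_n,1))$;
\item the $\mathcal{G}_n$-conjugates $\tau_n^g=((\rho_n g,1)\ (\theta_n g,1))$.
\end{itemize}
A set of transpositions generates the full symmetric group on $Y_n$ exactly when the graph whose edges are these transpositions is connected. The fibers are cliques, so it is enough that the graph on $L_n$ with edge set $\{\{\rho_n g,\theta_n g\}\mid g\in\mathcal{G}_n\}$ is connected. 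This is exactly the statement established in the proof of \cref{thm: the W group is V} via \cref{lem:pair-transitivity}: that graph is the complete bipartite graph $K_{2^{n-1},2^{n-1}}$. Alternatively, since $\theta_n=\rho_n a_n$ and $\mathcal{G}_n$ is transitive, one could argue directly through the Schreier graph $\Gamma_n$. Hence the unsigned basis permutations in $\langle\mathcal{A}_n\rangle$ form all of $\Sym(Y_n)$.

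\emph{Step 3: conclusion.} Combining the full sign group from Step 1 with all of $\Sym(Y_n)$ from Step 2 gives $M(Y_n)\le\langle\mathcal{A}_n\rangle$.

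The only non-formal point is the connectivity argument in Step 2, and it can be reused verbatim from the proof of \cref{thm: the W group is V}. The rest is bookkeeping about how conjugation by the type-1 automorphisms moves fibers.
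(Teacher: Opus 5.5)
Your proposal is correct and follows the paper's proof essentially step for step: conjugating $M_{\eta_n}(Y_n)$ by $\mathcal{G}_n$ gives all sign changes and all intra-fiber transpositions, and the connectivity of the graph with edges $\{\rho_n g,\theta_n g\}$, taken from the proof of \cref{thm: the W group is V}, shows that these together with the $\mathcal{G}_n$-conjugates of $\tau_n$ generate $\Sym(Y_n)$. Your aside about arguing via $\Gamma_n$ also works once made precise: $b_n,c_n,d_n$ fix $\rho_n$, so $\langle \St_{\mathcal{G}_n}(\rho_n),a_n\rangle=\mathcal{G}_n$, and hence the orbital graph of the pair $(\rho_n,\rho_n a_n)=(\rho_n,\theta_n)$ is connected.
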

\begin{proof}
By spherical transitivity of $\mathcal{G}$, the group $\mathcal{G}_n$ acts transitively on $L_n$.
Hence, for every $x\in L_n$, the fiberwise monomial subgroup $M_x(Y_n)$ is a conjugate of $M_{\eta_n}(Y_n)$ by some word over $S_n$.
In particular, $\langle \mathcal{A}_n\rangle$ contains every signed basis transposition inside each fiber.
The graph $\Lambda_n$ on the vertex set $L_n$ whose edges are
\[
  \{\rho_n g,\theta_n g\},~~g\in\mathcal{G}_n
\]
is connected, which is shown in the proof of Theorem~5.4.
The conjugates of $\tau_n$ by elements of $\mathcal{G}_n$ are precisely the transpositions of the basis elements
\[
  u_{(\rho_n g,1)}
  \quad\text{and}\quad
  u_{(\theta_n g,1)}
\]
corresponding to the edges of $\Lambda_n$.
Together with the transpositions inside the fibers, these are the edge transpositions of a connected graph on the vertex set $Y_n$. 
Therefore they generate $\Sym(Y_n)$.
Since the inversions of the basis elements are contained in the conjugates of $M_{\eta_n}(Y_n)$, the statement follows.
\end{proof}

\begin{theorem}\label{thm:An-generates-Aut}
The set $\mathcal{A}_n$ generates $\Aut(F(Y_n))$. 
\end{theorem}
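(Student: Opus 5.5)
By \cref{thm:nielsen-generating-aut-free}, $\Aut(F(Y_n))$ is generated by $M(Y_n)$ together with all Nielsen transvections $\lambda_{y,z}$ for distinct $y,z\in Y_n$. \Cref{lem:full-signed-monomial} already shows that $M(Y_n)\le\langle\mathcal{A}_n\rangle$, so it remains to show that every $\lambda_{y,z}$ lies in $\langle\mathcal{A}_n\rangle$. I will obtain all of them as conjugates of the single transvection $\nu_n$ by basis permutations.

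For a permutation $\pi\in\Sym(Y_n)$, viewed as the automorphism $u_y\mapsto u_{\pi(y)}$, a direct computation gives
\[
\pi\,\lambda_{y,z}\,\pi^{-1}=\lambda_{\pi(y),\pi(z)}.
\]
Indeed, the right-hand side sends $u_{\pi(y)}$ to $u_{\pi(y)}u_{\pi(z)}$ and fixes every other basis element. Depending on the composition convention the roles of $\pi$ and $\pi^{-1}$ may swap, but that is harmless because both lie in the group. Since $\Sym(Y_n)$ acts transitively on ordered pairs of distinct elements, for any distinct $y,z$ we can choose $\pi\in\Sym(Y_n)\le M(Y_n)\le\langle\mathcal{A}_n\rangle$ with $\pi((\rho_n,2))=y$ and $\pi((\rho_n,3))=z$. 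This gives $\lambda_{y,z}=\pi\nu_n\pi^{-1}\in\langle\mathcal{A}_n\rangle$, and the theorem follows.

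I do not expect any step to be a real obstacle. The only points needing care are two. First, the conjugation formula must be checked with the paper's convention for composing automorphisms. Second, $\Sym(Y_n)$ must genuinely sit inside $M(Y_n)$ as the sign-free basis permutations; this holds because $M(Y)\cong(\Z/2)^Y\rtimes\Sym(Y)$.

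It is also worth noting that \cref{thm:nielsen-generating-aut-free} needs only the right transvections $u_y\mapsto u_yu_z$. If one prefers a generating set containing left transvections or the transvections $u_y\mapsto u_yu_z^{-1}$, these are obtained by further conjugating with the sign changes in $M(Y_n)$.
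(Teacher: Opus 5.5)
Your proposal is correct and follows essentially the same route as the paper: \cref{lem:full-signed-monomial} gives $M(Y_n)\le\langle\mathcal{A}_n\rangle$, conjugating $\nu_n$ by basis permutations yields every Nielsen transvection $\lambda_{y,z}$, and \cref{thm:nielsen-generating-aut-free} finishes the argument. Your explicit check of $\pi\lambda_{y,z}\pi^{-1}=\lambda_{\pi(y),\pi(z)}$, and your remark that sign-free permutations already suffice, are correct refinements of the paper's one-line argument.
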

\begin{proof}
By~\cref{lem:full-signed-monomial}, the group $\langle \mathcal{A}_n\rangle$ contains all signed permutations of the basis.
It also contains the Nielsen transvection $\nu_n$.
Conjugating $\nu_n$ by the signed basis permutations on $Y_n$, we obtain every Nielsen transvection. 
Hence $\langle \mathcal{A}_n\rangle=\Aut(F(Y_n))$ by~\cref{thm:nielsen-generating-aut-free}.
\end{proof}

\section{Growth estimates for automorphism groups of free groups}

In this section, we estimate the growth of $\Aut(F(Y_n))$ with respect to the generating sets $\mathcal A_n$ constructed in the preceding section.
We continue to use the notation
\[
r_n\defeq 2^n-2
\]
introduced in~\cref{prop:BE-finite}. 

\begin{lemma}\label{lem:commuting-supports}
Let $n \geq 2$ and let $g,h\in S_n^\ast$ have length strictly less than $r_n/2$.
Then every  element of $M_{\eta_n}(Y_n)^h$ commutes with the $g$-conjugates $\tau_n^g$ and $\nu_n^g$.
\end{lemma}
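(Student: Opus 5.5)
The plan is to mimic the proof of \cref{lem:commuting-conjugates}, replacing disjointness of supports in the Cantor set by a disjointness statement for the basis elements involved in the automorphisms. Say an automorphism $\phi\in\Aut(F(Y_n))$ is \emph{supported on} $Z\subseteq Y_n$ if $\phi(u_y)=u_y$ for all $y\notin Z$ and $\phi$ maps the subgroup $F(Z)=\langle u_z\mid z\in Z\rangle$ to itself. We first record the elementary fact: if $\phi$ is supported on $Z$ and $\psi$ on $Z'$ with $Z\cap Z'=\emptyset$, then $\phi$ and $\psi$ commute. To check this, it suffices to compare both compositions on basis elements. For $y\in Z$, $\psi$ fixes every $u_z$ with $z\in Z$, so $\psi$ is the identity on $F(Z)$. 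Hence $\phi\psi(u_y)=\phi(u_y)=\psi\phi(u_y)$, since $\phi(u_y)\in F(Z)$. The case $y\in Z'$ is symmetric, and for $y$ outside both sets, both compositions fix $u_y$.

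Next we compute supports of conjugates. A type-1 automorphism $g\in S_n^\ast$ permutes the basis, $u_{(x,i)}\mapsto u_{(xg,i)}$. With the right-action convention, if $\phi$ is supported on $Z$ then $\phi^g=g^{-1}\phi g$ is supported on $Zg$; the direction of the convention does not matter, as in the earlier sections. Now $M_{\eta_n}(Y_n)$ is supported on $\{\eta_n\}\times\{1,2,3,4\}$. The transposition $\tau_n$ is supported on $\{(\rho_n,1),(\theta_n,1)\}$, and $\nu_n$ on $\{(\rho_n,2),(\rho_n,3)\}$. Note that $\nu_n$ maps $F(\{(\rho_n,2),(\rho_n,3)\})$ onto itself, and its inverse does too. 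Therefore every element of $M_{\eta_n}(Y_n)^h$ is supported on $\{\eta_n h\}\times\{1,2,3,4\}$, $\tau_n^g$ on $\{\rho_n g,\theta_n g\}\times\{1\}$, and $\nu_n^g$ on $\{\rho_n g\}\times\{2,3\}$.

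By the first step, it remains to show that $\xi g\neq\eta_n h$ for $\xi\in\{\rho_n,\theta_n\}$. This is exactly the distance estimate from the proof of \cref{lem:commuting-conjugates}. If $\xi g=\eta_n h$, the triangle inequality in $\Gamma_n$ gives
\[
d_{\Gamma_n}(\rho_n,\eta_n)\le 1+|g|+|h|<1+r_n=2^n-1,
\]
using $d(\rho_n,\theta_n)\le 1$ because $\theta_n=\rho_n a_n$. This contradicts \cref{lem:separation}.

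The only step needing real care is the support formalism for automorphisms rather than homeomorphisms. We must ensure that the invariance of $F(Z)$ holds for each element involved, including inverses, and that conjugation by the basis permutation $g$ transports supports correctly. Once that is in place, the combinatorial core is verbatim the earlier argument.
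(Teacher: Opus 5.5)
Your proposal is correct and follows the paper's proof. You identify the supports of the conjugates (the fiber over $\eta_n h$ versus $\{(\rho_n g,1),(\theta_n g,1)\}$ and $\{(\rho_n g,2),(\rho_n g,3)\}$) and exclude $\eta_n h\in\{\rho_n g,\theta_n g\}$ by the triangle inequality in $\Gamma_n$ together with \cref{lem:separation}, exactly as the paper does; the only addition is that you explicitly verify that automorphisms supported on disjoint sets of basis elements commute, a step the paper leaves implicit in ``the corresponding supports are disjoint and the claim follows.''
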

\begin{proof}
The conjugate of $M_{\eta_n}(Y_n)$ by $h$ is supported on the basis elements in the fiber over $\eta_nh$.
On the other hand, $\nu_n^g$ is supported on the basis elements $(\rho_ng,2)$ and $(\rho_ng,3)$, while $\tau_n^g$ is supported on $(\rho_ng,1)$ and $(\theta_ng,1)$.
Thus it is enough to prove that $\eta_nh$ is distinct from both $\rho_ng$ and $\theta_ng$.
To see this, we recall that
\begin{equation}\label{eq:recall-distance}
1+r_n=2^n-1=d_{\Gamma_n}(\rho_n,\eta_n).
\end{equation}
If $\eta_nh=\rho_ng$, then
\[
d_{\Gamma_n}(\rho_n,\eta_n)
\leq d_{\Gamma_n}(\rho_n,\rho_ng)
+d_{\Gamma_n}(\rho_ng,\eta_nh)
+d_{\Gamma_n}(\eta_nh,\eta_n)
\leq \abs{g}_{S_n}+\abs{h}_{S_n}
< r_n,
\]
which contradicts~\eqref{eq:recall-distance}.
Similarly, if $\eta_nh=\theta_ng$, then
\[
\begin{aligned}
d_{\Gamma_n}(\rho_n,\eta_n)
&\leq d_{\Gamma_n}(\rho_n,\theta_n)
+d_{\Gamma_n}(\theta_n,\theta_ng)
+d_{\Gamma_n}(\theta_ng,\eta_nh)
+d_{\Gamma_n}(\eta_nh,\eta_n)\\
&\leq 1+\abs{g}_{S_n}+\abs{h}_{S_n}\\
&<1+r_n,
\end{aligned}
\]
which is again a contradiction.
Hence the corresponding supports are disjoint and the claim follows.
\end{proof}

\begin{lemma}\label{lem:normal-form-2}
Let $n \geq 2$, and let $w$ be a word over $\mathcal{A}_n\cup \mathcal{A}_n^{-1}$ of length $\ell < r_n/2$.
Then the element represented by $w$ can be written as
\[
p_1 \cdot p_2 \cdot p_3 \cdot p_4,
\]
where $p_1$ is represented by a word of the form $g_1\cdots g_{\ell}$ over $S_n\cup\{1\}$, and the remaining factors have the form
\begin{align*}
  p_2
  &=
  (a_1^{(2)})^{g_1g_2\cdots g_{\ell}} \cdot
  (a_2^{(2)})^{g_2\cdots g_{\ell}}
  \cdot
  \ldots
  \cdot
  (a_{\ell}^{(2)})^{g_{\ell}},\\
  p_3
  &=
  (a_1^{(3)})^{g_1g_2\cdots g_{\ell}} \cdot
  (a_2^{(3)})^{g_2\cdots g_{\ell}}
  \cdot
  \ldots
  \cdot
  (a_{\ell}^{(3)})^{g_{\ell}},\\
  p_4
  &=
  (a_1^{(4)})^{g_1g_2\cdots g_{\ell}} \cdot
  (a_2^{(4)})^{g_2\cdots g_{\ell}}
  \cdot
  \ldots
  \cdot
  (a_{\ell}^{(4)})^{g_{\ell}},
\end{align*}
where for each $i$ we have
\[
  g_i\in S_n\cup\{1\},
  \qquad
  a_i^{(2)}\in M_{\eta_n}(Y_n),
  \qquad
  a_i^{(3)}\in\{\tau_n,1\},
  \qquad
  a_i^{(4)}\in\{\nu_n,\nu_n^{-1},1\}.
\]
\end{lemma}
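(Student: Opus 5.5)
The proof follows that of \cref{lem:normal-form}, with \cref{lem:commuting-supports} in place of \cref{lem:commuting-conjugates}. Write $w=t_1t_2\cdots t_\ell$ with $t_i\in\mathcal{A}_n\cup\mathcal{A}_n^{-1}$ and sort the letters as follows.
\begin{itemize}
\item Since $S_n$ consists of involutions and $M_{\eta_n}(Y_n)$ is a group, we have $S_n^{-1}=S_n$, $M_{\eta_n}(Y_n)^{-1}=M_{\eta_n}(Y_n)$ and $\tau_n^{-1}=\tau_n$.
\item If $t_i\in S_n$, set $g_i=t_i$ and $a_i=1$.
\item Otherwise, set $g_i=1$ and $a_i=t_i$. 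Then $a_i$ lies in $M_{\eta_n}(Y_n)$, in $\{\tau_n\}$, or in $\{\nu_n,\nu_n^{-1}\}$.
\end{itemize}
The identity
\[
a_1g_1a_2g_2\cdots a_\ell g_\ell=g_1\cdots g_\ell\cdot a_1^{g_1\cdots g_\ell}a_2^{g_2\cdots g_\ell}\cdots a_\ell^{g_\ell}
\]
is the purely formal rewriting \eqref{eq:rewriting-w}, which holds in any group. Now define $a_i^{(j)}$ for $j\in\{2,3,4\}$ by type, exactly as before. Then $a_i=a_i^{(2)}a_i^{(3)}a_i^{(4)}$, and at most one factor is non-trivial.

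The key step is to show that conjugates of distinct types commute. Each suffix $g_i\cdots g_\ell$ has $S_n$-length at most $\ell<r_n/2$.
\begin{itemize}
\item For types $2$ versus $3$ and $2$ versus $4$, \cref{lem:commuting-supports} gives that $(a_i^{(2)})^{g_i\cdots g_\ell}$ commutes with $(a_k^{(3)})^{g_k\cdots g_\ell}$ and with $(a_k^{(4)})^{g_k\cdots g_\ell}$. For $\nu_n^{-1}$, use that commuting with $\nu_n^g$ implies commuting with its inverse.
\item For types $3$ versus $4$, \cref{lem:commuting-supports} does not apply, so I argue directly. The automorphism $\tau_n^g$ is a basis permutation moving only $u_{(\rho_ng,1)}$ and $u_{(\theta_ng,1)}$. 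The automorphism $\nu_n^{g'}$ maps $u_{(\rho_ng',2)}\mapsto u_{(\rho_ng',2)}u_{(\rho_ng',3)}$ and fixes every other basis element. Both basis elements involved in $\nu_n^{g'}$ lie on sheets $2$ and $3$, whereas $\tau_n^g$ only touches sheet $1$, since $S_n$ preserves sheets. An automorphism that permutes a set $B_1$ of basis elements and fixes the rest commutes with an automorphism that only alters and involves basis elements of a disjoint set $B_2$. One checks this on each generator $u_y$.
\end{itemize}

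With this commutation in hand, expand $\prod_i\bigl(a_i^{(2)}a_i^{(3)}a_i^{(4)}\bigr)^{g_i\cdots g_\ell}$ and move all type-$2$ factors to the left, then all type-$3$ factors before the type-$4$ factors. Within each type the original order is kept, so the product becomes $p_2p_3p_4$ as claimed, and $p_1=g_1\cdots g_\ell$. The assumption $n\ge2$ only serves to make \cref{lem:commuting-supports} applicable.

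The only point needing care is the type-$3$/type-$4$ commutation, because here "support" has to be interpreted for automorphisms of a free group rather than for homeomorphisms. The disjoint-sheets argument above settles it.
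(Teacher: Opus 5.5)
Your proposal is correct and takes essentially the same approach as the paper. You use the same rewriting that pushes the $S_n$-letters to the left, \cref{lem:commuting-supports} for type~2 against types~3 and~4, the disjoint-sheets observation for type~3 against type~4, and an order-preserving regrouping by type; your check on basis elements that a sheet-one transposition commutes with a transvection on sheets~2 and~3 spells out what the paper asserts in one line.
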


\begin{proof}
We write $w$ as a product $t_1 \ldots t_\ell$ of letters $t_i \in \mathcal{A}_n\cup\mathcal{A}_n^{-1}$ and write each letter in the form
\[
  t_i=a_i^{(2)}a_i^{(3)}a_i^{(4)}g_i,
\]
where $g_i\in S_n\cup\{1\}$, $a_i^{(2)}\in M_{\eta_n}(Y_n)$, $a_i^{(3)}\in\{\tau_n,1\}$, and $a_i^{(4)}\in\{\nu_n,\nu_n^{-1},1\}$, with at most one of $a_i^{(2)},a_i^{(3)},a_i^{(4)}$ non-trivial.
This is possible because $M_{\eta_n}(Y_n)$ is a group, the elements of $S_n$ and $\tau_n$ are involutions, and $\mathcal{A}_n \cup \mathcal{A}_n^{-1}$ only adds $\nu_n^{-1}$.
Pushing all $g_i$'s to the left gives
\[
t_1\cdots t_\ell
=
g_1\cdots g_\ell
\cdot
\prod_{i=1}^{\ell}
\bigl(a_i^{(2)}a_i^{(3)}a_i^{(4)}\bigr)^{g_i\cdots g_\ell},
\]
where $x^g = g^{-1}xg$. For $j\in\{2,3,4\}$, call the elements
\[
  (a_i^{(j)})^{g_i\cdots g_\ell}
  \qquad (1\leq i\leq \ell)
\]
the conjugated factors of type $j$. Thus ``type'' refers to these conjugates appearing after the rewriting, not to different generators of type $3$ or type $4$.

For each $i$, the suffix $g_i\cdots g_\ell$ has $S_n$-length at most $\ell < r_n/2$.
Hence \cref{lem:commuting-supports} implies that every conjugated factor of type $2$ commutes with every conjugated factor of type $3$ and with every conjugated factor of type $4$.
A conjugated factor of type $3$ is supported on sheet $1$, whereas a conjugated factor of type $4$ is supported on sheets $2$ and $3$; hence these two factors commute as well.
We may therefore move all conjugated factors of type $2$ before all conjugated factors of type $3$, and all conjugated factors of type $3$ before all conjugated factors of type $4$, preserving the order inside each type.
This gives exactly the three displayed products $p_2,p_3,p_4$, and hence the claimed normal form.
\end{proof}

We are now ready to prove the growth estimate for automorphism groups of free groups stated in the introduction.

\begin{proof}[Proof of~\cref{thm:mainestimate}]
Recall the constant $\alpha_0$ from~\cref{not:alpha0} and fix $\alpha\in(\alpha_0,1)$.
Let $P_i(\ell)$ denote the set of all possible factors $p_i$ appearing in the normal form of \cref{lem:normal-form-2} for words of length at most $\ell\leq r_n/3$.
The first factor satisfies
\[
  \abs{P_1(\ell)}\leq \exp(C_1\ell^{\alpha_0})
\]
by \cref{thm:G-subexponential}, after passing to the finite quotient $\mathcal{G}_n$.
Let $C_0>0$ be a constant such that the estimates from \cref{prop:BE-finite} hold with $C_0$ for both $\rho_n$ and $\eta_n$.
To estimate the number of possible factors $p_2$, we use the notation of~\cref{lem:normal-form-2} and set
\[
u\defeq g_1\cdots g_\ell.
\]
Each conjugate
\[
  (a_i^{(2)})^{g_i\cdots g_\ell}
\]
is supported on the fiber over the point $\eta_n g_i\cdots g_\ell$.
Hence $p_2$ is supported on the inverted orbit $\OO_{\eta_n}(u)$.
By~\cref{prop:BE-finite}, there are at most
$\exp(C_0\ell^{\alpha_0})$ inverted orbits
$\OO_{\eta_n}(u)$ arising from words $u$ of length at most $\ell$, and each such orbit has cardinality at most
$C_0\ell^{\alpha_0}$.
At each point of an orbit the value of $p_2$ lies in a copy of the group $M_{\eta_n}(Y_n)$, which is of order $2^4\cdot 4!$.
Therefore
\[
  \abs{P_2(\ell)}
  \leq
  \exp(C_0\ell^{\alpha_0})\cdot (2^4\cdot 4!)^{C_0\ell^{\alpha_0}}
  \leq
  \exp(C_2\ell^{\alpha_0})
\]
for a suitable constant~$C_2>0$.
The factor $p_3$ of a word of length~$\ell$ is a permutation of the sheet-one basis elements.
Each non-trivial conjugate
\[
  (a_i^{(3)})^{g_i\cdots g_\ell}
\]
is the transposition of $u_{(\rho_n g_i\cdots g_\ell,1)}$ and $u_{(\theta_n g_i\cdots g_\ell,1)}$. 
Thus the support of $p_3$ is contained in
\[
  \{\rho_n g_i\cdots g_\ell,\ \theta_n g_i\cdots g_\ell\mid 1\leq i\leq \ell\}\times\{1\}.
\]
We now show that this support is contained in a single inverted orbit based at $\rho_n$.
To this end, consider the word
\[
\widetilde u
\defeq
a_ng_1a_n\,a_ng_2a_n\cdots
a_ng_{\ell-1}a_n\,a_ng_\ell.
\]
This is a word over $S_n\cup\{1\}$ of length at most $3\ell\le r_n$.
Since $\theta_n=\rho_na_n$ and $a_n^2=1$, the suffixes of $\widetilde u$ starting at $g_i$ give the points $\rho_n g_i\cdots g_\ell$, while the suffixes starting at the $a_n$ immediately before $g_i$ give the points $\theta_n g_i\cdots g_\ell$.
Hence the support of $p_3$ is contained in $\OO_{\rho_n}(\widetilde u)\times\{1\}$.
Another application of~\cref{prop:BE-finite} tells us that there are at most $\exp\bigl(C_0(3\ell)^{\alpha_0}\bigr)$
possible inverted orbits $\OO_{\rho_n}(\widetilde u)$, each of cardinality at most
$C_0(3\ell)^{\alpha_0}$.
For a fixed orbit $O$, the factor $p_3$ is a permutation of the
sheet-one basis elements over $O$.
Thus there are at most $\lvert O\rvert!$ possibilities for $p_3$.
Hence, setting
\[
m_\ell\defeq\left\lceil C_0(3\ell)^{\alpha_0}\right\rceil,
\]
we obtain
\[
\abs{P_3(\ell)}
\leq
\exp\bigl(C_0(3\ell)^{\alpha_0}\bigr)m_\ell!
\leq
\exp\bigl(C_3\ell^{\alpha_0}\log\ell\bigr)
\]
for a suitable constant $C_3>0$ and every $\ell \geq 2$.
Finally, we consider the factor $p_4$ of a word of length~$\ell$. Setting $u=g_1\cdots g_\ell$, the factor $p_4$ is supported on the inverted orbit $\OO_{\rho_n}(u)$.
Again, the number of possible such orbits is at most $\exp(C_0\ell^{\alpha_0})$, and each has cardinality at most $C_0\ell^{\alpha_0}$.
At a point $x$ of this orbit, the possible contribution is a power of the transvection
\[
  u_{(x,2)}\mapsto u_{(x,2)}u_{(x,3)}
\]
with exponent between $-\ell$ and $\ell$.
So there are at most $2\ell+1$ choices at each point and we obtain
\[
\abs{P_4(\ell)}
\leq
\exp(C_0\ell^{\alpha_0})\cdot (2\ell+1)^{C_0\ell^{\alpha_0}}
\leq
\exp(C_4\ell^{\alpha_0} \log \ell)
\]
for a suitable constant $C_4>0$.
Since
\[
\ell^{\alpha_0}\log\ell\leq C_\alpha\ell^\alpha
\]
for each $\ell\geq2$ and a suitable constant $C_\alpha>0$, the preceding estimates yield a constant $C>0$, independent of $n$, such that
\[
\gamma_{\Aut(F(Y_n))}^{\mathcal A_n}(\ell)
\leq
\prod_{i=1}^4\abs{P_i(\ell)}
\leq
\exp\bigl(C\ell^{\alpha}\bigr)
\]
for every $n$ and every $\ell \leq r_n/3$.
Since $F(Y_n)\cong F_{2^{n+2}}$ and $r_n=2^n-2$, this proves the first assertion of~\cref{thm:mainestimate}.

To prove the second assertion we fix $\beta\in(0,1-\alpha_0)$ and $\alpha\in(\alpha_0,1-\beta)$.
Applying the first assertion to this $\alpha$, we obtain a constant $C>0$ such that
\[
\gamma_{\Aut(F(Y_n))}^{\mathcal A_n}(\ell)\leq\exp\bigl(C\ell^{\alpha}\bigr)
\]
for every $n$ and every $0\leq\ell\leq r_n/3$.
By setting $\ell_n\defeq\bigl\lfloor\frac{r_n}{3}\bigr\rfloor$
we therefore obtain
\[
\omega\bigl(\Aut(F_{2^{n+2}})\bigr)
\leq
\omega\bigl(\Aut(F(Y_n)),\mathcal A_n\bigr)
\leq
\gamma_{\Aut(F(Y_n))}^{\mathcal A_n}(\ell_n)^{1/\ell_n}
\leq
\exp\bigl(C\ell_n^{\alpha-1}\bigr)
\]
for all sufficiently large $n$.
Since $\ell_n\geq 2^{n-2}$ for sufficiently large $n$ we deduce that
\[
C\ell_n^{\alpha-1}
\leq
C\,2^{-(1-\alpha)(n-2)}
\leq
2^{-\beta(n+2)}
\]
for sufficiently large $n$, which proves the second assertion.
\end{proof}

\section{A common quotient of the groups $\Aut(F_n)$}
\label{sec:acylhyp}

The following statement is a direct consequence of the fact that the automorphism group of a free group of rank at least~2 is acylindrically hyperbolic, due to Genevois--Horbez~\cite[Corollary~1.2]{GenevoisHorbez2020}, and the common quotient theorem of Minasyan--Osin~\cite[Theorem~1.1]{MinasyanOsin2018}. The latter says that every countable family of countable acylindrically hyperbolic groups has a common finitely generated acylindrically hyperbolic quotient.

\begin{theorem}\label{thm: common quotient}
The automorphism groups of free groups $\Aut(F_n)$, $n\ge 2$, have a common finitely generated acylindrically hyperbolic quotient. 
\end{theorem}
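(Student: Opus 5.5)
The plan is to verify the hypotheses of the common quotient theorem of Minasyan--Osin~\cite[Theorem~1.1]{MinasyanOsin2018} for the family $\{\Aut(F_n)\}_{n\ge 2}$ and then apply it directly. That theorem takes as input a countable family of countable acylindrically hyperbolic groups and outputs a finitely generated acylindrically hyperbolic group that is a quotient of each member of the family. So the proof reduces to three checks, carried out in this order.

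First, the family is countable: it is indexed by the integers $n \geq 2$. Second, each $\Aut(F_n)$ is countable. Indeed, $\Aut(F_n)$ is finitely generated by \cref{thm:nielsen-generating-aut-free}, since there are finitely many signed basis permutations and Nielsen transvections. Alternatively, an automorphism is determined by the images of the $n$ basis elements, which lie in the countable group $F_n$. Third, each $\Aut(F_n)$ with $n\ge 2$ is acylindrically hyperbolic, which is exactly the content of Genevois--Horbez~\cite[Corollary~1.2]{GenevoisHorbez2020}.

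With these three points in place, \cite[Theorem~1.1]{MinasyanOsin2018} provides a finitely generated acylindrically hyperbolic group $Q$ together with surjections $\Aut(F_n)\twoheadrightarrow Q$ for every $n\ge 2$, which is the assertion.

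There is no genuine obstacle. The only point requiring care is the precise form of the Minasyan--Osin statement: it requires the groups to be countable and to be acylindrically hyperbolic in the non-elementary sense, which is part of the standard definition. It yields a common quotient that is itself finitely generated and acylindrically hyperbolic. I would quote it verbatim to make sure no additional hypothesis, such as finite generation of the family members or torsion-freeness, is needed. If such a hypothesis did appear, finite generation is already supplied by \cref{thm:nielsen-generating-aut-free}.
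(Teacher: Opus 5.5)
Your proposal is correct and follows the paper's own argument: the paper also obtains the statement directly from the Genevois--Horbez result that $\Aut(F_n)$ is acylindrically hyperbolic for $n\ge 2$, combined with the Minasyan--Osin common quotient theorem for countable families of countable acylindrically hyperbolic groups. Your explicit check of countability, via finite generation from the Nielsen generators, is a routine step that the paper leaves implicit.
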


\begin{proof}[Proof of~\cref{thm:acylhyp}]
By~\cref{thm: common quotient} there is a finitely generated acylindrically hyperbolic group $Q$ and epimorphisms
\[
  q_n\colon \Aut(F_{2^{n+2}})\twoheadrightarrow Q
\]
for every $n\in\N$.
The group $Q$ is a Kazhdan group since it is a quotient of Kazhdan groups by the result of Kaluba-Kielak-Nowak~\cite[Theorem~1.1]{KalubaKielakNowak2021} or the earlier result of Kaluba--Nowak--Ozawa~\cite{KalubaNowakOzawa2018}. See also~\cref{rem: property T} below.
For every $n\in\N$, let
\[
U_n\defeq q_n(\mathcal A_n).
\]
By~\cref{thm:An-generates-Aut}, the set $U_n$ is a finite generating
set of $Q$.
Since taking quotients cannot increase the growth function with respect to the image
generating set,~\cref{thm:mainestimate} provides us, for any fixed $\alpha\in(\alpha_0,1)$, with a constant $C>0$ with
\[
\gamma_Q^{U_n}(\ell)
\leq
\gamma_{\Aut(F_{2^{n+2}})}^{\mathcal A_n}(\ell)
\leq
\exp\bigl(C\ell^{\alpha}\bigr)
\]
for every $0\leq\ell\leq(r_n)/3$.
By setting $\ell_n \defeq \left\lfloor\frac{r_n}{3}\right\rfloor$ we obtain
\[
1
\leq \omega(Q)
\leq \omega(Q,U_n)
\leq \gamma_Q^{U_n}(\ell_n)^{1/\ell_n}
\leq \exp\bigl(C\ell_n^{\alpha-1}\bigr)
\]
for all sufficiently large $n$.
Since $\ell_n\to\infty$ and $\alpha<1$, the right-hand side converges to $1$, which shows that $\omega(Q)=1$.
On the other hand, it is well-known that $Q$ has exponential growth since an acylindrically hyperbolic group contains a non-abelian free group (coming from independent loxodromic elements)~\cite{Osin16}. 
So $Q$ has non-uniform exponential growth.
\end{proof}

\begin{remark}\label{rem: property T}
To obtain the Kazhdan property for~$Q$ one does not have to appeal to results about the Kazhdan property for automorphism groups of free groups. One could simply take the common quotient of $Q$ as above and a Kazhdan hyperbolic group. 
\end{remark}

\section{Generating sets for elementary linear groups}
\label{sec:el-generators}

In this section we construct the generating sets for elementary linear groups that give rise to~\cref{thm:el}.
The construction of these generating sets is inspired by the generating sets $\mathcal{A}_n$ of $\Aut(F_{2^{n+2}})$ from~\cref{sec:aut}, with elementary matrices taking the place of Nielsen transvections.
To make this precise, we fix a finitely generated associative unital ring $R$ and a finite set $Y$.

\begin{notation}
For distinct $y,z\in Y$ and $r\in R$, let
\[
e_{y,z}(r)\defeq I+rE_{y,z}\in\GL_Y(R)
\]
be the corresponding elementary matrix.
We write
\[
\EL_Y(R)
\defeq
\bigl\langle e_{y,z}(r)
\mid y,z\in Y,\ y\neq z,\ r\in R\bigr\rangle.
\]
\end{notation}

We will use the relations
\begin{align}
e_{y,z}(r)e_{y,z}(s)&=e_{y,z}(r+s),
\label{eq:elementary-addition}\\
[e_{y,z}(r),e_{z,v}(s)]&=e_{y,v}(rs)
\label{eq:elementary-commutator}
\end{align}
for pairwise distinct $y,z,v\in Y$, where $[g,h]=ghg^{-1}h^{-1}$.
Since transposition matrices do not generally lie in $\EL_Y(R)$ we will work with signed transposition matrices instead.
For distinct $y,z\in Y$, such a signed transposition matrix $w_{y,z}\in\GL_Y(R)$ is given by the block
\[
\begin{pmatrix}
0&1\\
-1&0
\end{pmatrix}
\]
on the coordinates indexed by $y$ and $z$, while fixing all other coordinates.
A direct computation shows that
\[
w_{y,z}
=
e_{y,z}(1)e_{z,y}(-1)e_{y,z}(1),
\]
so that $w_{y,z}\in\EL_Y(R)$.
More generally, we will work with \emph{signed monomial matrices}, i.e.\ matrices $M$ that map each standard basis vector $b_y$ to $\pm b_{\sigma(y)}$ for some permutation $\sigma\in\Sym(Y)$, which we call the \emph{underlying permutation} of $M$.

\begin{definition}\label{def:elementary-monomial}
For a finite set $Y$ with $|Y|\geq2$, we define the
\emph{elementary monomial subgroup}
\[
\mathcal M_R(Y)
\defeq
\bigl\langle w_{y,z}\mid y,z\in Y,\ y\neq z\bigr\rangle
\leq\EL_Y(R).
\]
\end{definition}

\begin{lemma}\label{lem:elementary-monomial}
The following statements hold for every finite set $Y$ with $|Y|\geq 2$.
\begin{enumerate}
\item The cardinality of $\mathcal M_R(Y)$ satisfies
\[
|\mathcal M_R(Y)|\leq 2^{|Y|-1}|Y|!.
\]
\item For every $\sigma\in\Sym(Y)$, the group $\mathcal M_R(Y)$ contains a signed monomial matrix with underlying permutation $\sigma$.
Moreover, if $\sigma$ is even, then the permutation matrix associated with $\sigma$ itself belongs to $\mathcal M_R(Y)$.
\item If $\Lambda$ is a connected graph with vertex set $Y$, then
\[
\mathcal M_R(Y)
=
\bigl\langle w_{y,z}\mid\{y,z\}\in E(\Lambda)\bigr\rangle.
\]
\item Given two pairs $(y,z)$ and $(y',z')$ of distinct elements of $Y$, there is an element $q\in\mathcal M_R(Y)$ with
\[
q^{-1}e_{y,z}(r)q=e_{y',z'}(\varepsilon r)
\]
for every $r\in R$ and some $\varepsilon\in\{\pm1\}$.
\end{enumerate}
\end{lemma}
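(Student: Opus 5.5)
The plan is to analyse $\mathcal M_R(Y)$ through the homomorphism $\pi\colon \mathcal M_R(Y)\to\Sym(Y)$ that sends a signed monomial matrix to its underlying permutation. Every generator $w_{y,z}$ is signed monomial, so $\mathcal M_R(Y)$ consists of signed monomial matrices and $\pi$ is defined on it. The image of $w_{y,z}$ is the transposition $(y\ z)$, so $\pi$ is surjective; this gives the first half of (2).

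For (1), I will bound the kernel. It consists of diagonal matrices with entries $\pm1$, where $-1$ means $-1\in R$. I will show that every element of $\mathcal M_R(Y)$ has ``sign product'' equal to $1$, i.e.\ lies in the subgroup of signed monomial matrices whose product of nonzero entries is $+1$. This is a multiplicative invariant: the product of nonzero entries of a product of signed monomial matrices is the product of the individual ones, since all signs commute. For $w_{y,z}$ this product is $1\cdot(-1)\cdot1\cdots=-1$. That alone does not give the invariant, so I will adjust it by the sign of the permutation. The function $\chi(M)=\operatorname{sgn}(\pi(M))\cdot\prod(\text{nonzero entries})$ is a homomorphism to $\{\pm1\}\subseteq R^\times$, and $\chi(w_{y,z})=(-1)(-1)=1$. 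Hence $\chi\equiv1$ on $\mathcal M_R(Y)$. Kernel elements therefore have an even number of $-1$'s, so there are at most $2^{|Y|-1}$ of them, which gives $|\mathcal M_R(Y)|\le 2^{|Y|-1}|Y|!$. If $-1=1$ in $R$ the bound is trivial anyway.

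For the second half of (2), I will show that the kernel contains every diagonal matrix $d_{y,z}$ with $-1$ at $y,z$ and $1$ elsewhere, since $d_{y,z}=w_{y,z}^2$. These matrices generate all even sign patterns. Now take $M\in\mathcal M_R(Y)$ with $\pi(M)=\sigma$ even. Then $\chi(M)=1$ forces the product of the entries of $M$ to be $+1$, so $M$ equals the permutation matrix of $\sigma$ times an even diagonal sign matrix. Multiplying $M$ by a suitable product of the $d_{y,z}$ yields the permutation matrix itself.

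For (3), I will use the conjugation formula $w_{y,z}^{-1}w_{z,v}w_{y,z}=\pm w_{y,v}^{\pm1}$, checked by a routine $3\times3$ computation, for pairwise distinct $y,z,v$. The claim is that the subgroup generated by the edge generators of $\Lambda$ contains $w_{y,v}$ for every pair. I will argue by induction along a path in $\Lambda$ from $y$ to $v$, with edges $w_{z,y}=w_{y,z}^{-1}$. The sign ambiguity is harmless: $w^{-1}=-w$ restricted to the $2\times 2$ block, and the extra overall sign on the block can be removed by multiplying with $d_{y,v}=w_{y,v}^2$, or directly because $w_{y,v}^{-1}=w_{y,v}^3$ is already in the subgroup. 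The main obstacle, I expect, is getting this sign bookkeeping exactly right; it is where I would be most careful.

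For (4), I will use the identity $M^{-1}e_{y,z}(r)M=e_{\sigma(y),\sigma(z)}(\pm r)$, taken with the appropriate convention, for a signed monomial $M$ with underlying permutation $\sigma$. More precisely, conjugating $I+rE_{y,z}$ gives $I+\epsilon_y\epsilon_z rE_{\sigma(y),\sigma(z)}$, using that $\pm1$ is central in $R$. By (2), I can pick $q\in\mathcal M_R(Y)$ whose underlying permutation sends $y\mapsto y'$ and $z\mapsto z'$, or uses the inverse permutation depending on how conjugation acts. This gives the required relation with $\varepsilon=\epsilon_y\epsilon_z$.
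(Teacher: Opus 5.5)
Your proof is correct, but it is organized differently from the paper's. The paper never works with signs in $R$ directly. It takes the finite group $D\le\GL_Y(\Z)$ of signed permutation matrices of determinant one and shows that the $w_{y,z}$ generate $D$: transpositions realize every underlying permutation, and the squares $w_{y,z}^2$ give all even sign changes. It then notes that $\mathcal M_R(Y)$ is the image of $D$ under the map induced by $\Z\to R$. Parts (1) and (2) are read off from $D$. Part (3) follows by rerunning the same generation argument for the edge matrices of $\Lambda$: edge transpositions generate $\Sym(Y)$, and products of squares along a path give the sign change at any two coordinates.

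You instead stay in $\GL_Y(R)$ and use $\chi$ as a substitute for the determinant. This is legitimate for non-commutative $R$ because $\pm1$ is central. The price is the separate case $-1=1$, which the paper's lift to $\Z$ absorbs automatically. The gain is an intrinsic description of $\mathcal M_R(Y)$ as the signed monomial matrices with $\chi=1$.

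Your path-conjugation proof of (3) is more direct than the paper's and does not need (1) at all. The sign bookkeeping you were worried about is clean. With the paper's convention, $w_{y,z}^{-1}w_{z,v}w_{y,z}=w_{v,y}=w_{y,v}^{-1}$ exactly, so no correction by $d_{y,v}$ is needed. That is just as well, since invoking $d_{y,v}=w_{y,v}^2$ before knowing that $w_{y,v}$ lies in the subgroup would be circular. You only need to take the path simple, so that the three indices are pairwise distinct.

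In (4), with $Mb_x=\epsilon_xb_{\sigma(x)}$ one gets $M^{-1}e_{y,z}(r)M=e_{\sigma^{-1}(y),\sigma^{-1}(z)}(\epsilon_{\sigma^{-1}(y)}\epsilon_{\sigma^{-1}(z)}r)$. So you choose $\sigma$ with $\sigma(y')=y$ and $\sigma(z')=z$, as the paper does. Your hedging on the convention is harmless, since (2) realizes every permutation.
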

\begin{proof}
Let $D\leq\GL_Y(\Z)$ denote the group of signed monomial matrices of determinant one over $\Z$.
We first show that the matrices $w_{y,z}$ generate $D$ over $\Z$.
Each $w_{y,z}$ lies in $D$, so the generated group is contained in $D$.
Conversely, the underlying permutations of the $w_{y,z}$ contain all transpositions, while $w_{y,z}^2$ changes the signs of precisely the two coordinates $y$ and $z$.
These squares generate all diagonal sign changes involving an even number of minus signs.
Given $m\in D$ with underlying permutation $\sigma$, a product $w$ of matrices $w_{y,z}$ realizing $\sigma$ has the property that $w^{-1}m$ is a diagonal sign change of determinant one, so $m$ lies in the generated group.
Since $D$ has order $2^{|Y|-1}|Y|!$ and $\mathcal M_R(Y)$ is the image of $D$ under the homomorphism induced by the canonical unital ring homomorphism $\Z\to R$, the first assertion follows.

For the second assertion, let $\sigma\in\Sym(Y)$.
Since a sign change at an arbitrary $y\in Y$ changes the sign of the determinant, the group $D$ contains a signed monomial matrix with underlying permutation $\sigma$.
If $\sigma$ is even, its permutation matrix itself lies in $D$.
Passing to the image in $\mathcal M_R(Y)$ proves the second assertion.

If $\Lambda$ is connected, its edge transpositions generate $\Sym(Y)$.
Moreover, for any two $y,z\in Y$, multiplying the squares of the edge matrices along a path from $y$ to $z$ yields the sign change at precisely $y$ and $z$.
As every sign change at an even number of coordinates is a product of sign changes at two coordinates, the squares of the edge matrices generate all such sign changes.
Thus the argument of the first paragraph applies to the edge matrices of $\Lambda$.

For the fourth assertion, choose $\sigma\in\Sym(Y)$ mapping $(y',z')$ to $(y,z)$.
By the second assertion there is a signed monomial matrix $q\in\mathcal M_R(Y)$ with underlying permutation $\sigma$, say $qb_x=\varepsilon_xb_{\sigma(x)}$ with $\varepsilon_x\in\{\pm1\}$.
A direct computation gives
\[
q^{-1}e_{y,z}(r)q=e_{y',z'}(\varepsilon_{y'}\varepsilon_{z'}r)
\]
for every $r\in R$.
\end{proof}

Let us now fix a finite generating set $A$ of $R$ that satisfies $A=-A$ and $1\in A$.
As in the preceding sections, we consider the set
\[
Y_n=L_n\times\{1,2,3,4\}.
\]
We now define a finite subset
\[
\mathcal E_n(R)\subseteq\EL_{Y_n}(R),
\]
which will be shown to generate $\EL_{Y_n}(R)$.
As with the generating set $\mathcal A_n$ of $\Aut(F(Y_n))$, its construction is modeled on that of the generating set $T_n$ of Thompson's group $V$ and is organized into four types.

\smallskip

\begin{itemize}
\item\textbf{Type 1: The truncated Grigorchuk generators.}
For every $s_n\in S_n$, we denote by the same symbol the permutation matrix associated with the permutation
\[
(x,i)\longmapsto(xs_n,i)
\]
of $Y_n$.
This permutation is the disjoint union of four copies of the permutation induced by $s_n$ on $L_n$ and is therefore even.
It follows from~\cref{lem:elementary-monomial} that its permutation matrix belongs to $\EL_{Y_n}(R)$.

\item\textbf{Type 2: The elementary monomial group of the fiber over $\eta_n$.}
Let
\[
\mathcal M_{\eta_n}(Y_n;R)
\defeq
\mathcal M_R\bigl(\{\eta_n\}\times\{1,2,3,4\}\bigr)
\]
denote the subgroup of $\EL_{Y_n}(R)$ generated by the signed transpositions within the fiber $\{\eta_n\}\times\{1,2,3,4\}$ over $\eta_n$.

\item\textbf{Type 3: The linking signed transposition.}
We write
\[
w_n\defeq w_{(\rho_n,1),(\theta_n,1)}.
\]

\item\textbf{Type 4: The elementary matrices over $\rho_n$.}
For every $a\in A$, let
\[
\varepsilon_{n,a}
\defeq
e_{(\rho_n,2),(\rho_n,3)}(a).
\]
\end{itemize}

\begin{definition}\label{def:En}
We define the subset
\[
\mathcal E_n(R)
\defeq
S_n\cup\mathcal M_{\eta_n}(Y_n;R)
\cup\{w_n\}
\cup\{\varepsilon_{n,a}\mid a\in A\}.
\]
of $\EL_{Y_n}(R)$.
\end{definition}

\begin{lemma}\label{lem:full-elementary-monomial}
The subgroup $\mathcal M_R(Y_n)$ is contained in $\langle\mathcal E_n(R)\rangle$.
\end{lemma}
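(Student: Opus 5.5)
The plan is to follow the proof of \cref{lem:full-signed-monomial} and then apply \cref{lem:elementary-monomial}(3). By that item, it suffices to exhibit a connected graph $\Lambda$ on the vertex set $Y_n$ such that $w_{y,z}\in\langle\mathcal E_n(R)\rangle$ for every edge $\{y,z\}$ of $\Lambda$.

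\textbf{Step 1: the fibers.} The type-1 generators are permutation matrices $P_g$ for $g\in\mathcal{G}_n$, where $(x,i)\mapsto(xg,i)$. Conjugating a signed transposition by a permutation matrix gives a signed transposition on the permuted coordinates: $P^{-1}w_{y,z}P=w_{\pi(y),\pi(z)}$ for the appropriate permutation $\pi$. The sign convention is preserved because the signed transposition is transported entrywise. Since $\mathcal{G}_n$ acts transitively on $L_n$, conjugating $\mathcal M_{\eta_n}(Y_n;R)$ by suitable words over $S_n$ shows that $w_{(x,i),(x,j)}\in\langle\mathcal E_n(R)\rangle$ for every $x\in L_n$ and all $i\neq j$.

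\textbf{Step 2: linking the fibers.} Conjugating $w_n$ by $P_g$, $g\in\mathcal{G}_n$, yields $w_{(\rho_n g,1),(\theta_n g,1)}$. By the proof of \cref{thm: the W group is V}, the graph on $L_n$ with edges $\{\rho_n g,\theta_n g\}$ is the complete bipartite graph $K_{2^{n-1},2^{n-1}}$, and in particular it is connected.

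\textbf{Step 3: assembling $\Lambda$.} Let $\Lambda$ have two kinds of edges:
\begin{itemize}
\item all pairs $\{(x,i),(x,j)\}$ with $x\in L_n$ and $i\neq j$;
\item all pairs $\{(\rho_n g,1),(\theta_n g,1)\}$ with $g\in\mathcal{G}_n$.
\end{itemize}
Each fiber $\{x\}\times\{1,2,3,4\}$ is complete, and the sheet-one vertices are connected via Step 2. Hence $\Lambda$ is connected, and \cref{lem:elementary-monomial}(3) gives $\mathcal M_R(Y_n)=\langle w_{y,z}\mid \{y,z\}\in E(\Lambda)\rangle\le\langle\mathcal E_n(R)\rangle$.

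The only step needing care is the conjugation claim in Step 1, namely that $P_g^{-1}w_{y,z}P_g$ is exactly $w_{y',z'}$ for the transported ordered pair, rather than merely a signed variant. This is a direct computation with permutation matrices. In any case, an ordered-pair swap would only give $w_{z',y'}=w_{y',z'}^{-1}$, which lies in the same group, so the argument goes through either way.
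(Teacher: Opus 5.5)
Your proposal is correct and follows the paper's proof essentially step for step. You conjugate $\mathcal M_{\eta_n}(Y_n;R)$ by elements of $\mathcal G_n$ to obtain all in-fiber signed transpositions, and conjugate $w_n$ to obtain the linking edges $\{(\rho_n g,1),(\theta_n g,1)\}$. You then invoke the connectedness of this graph from the proof of \cref{thm: the W group is V} and conclude with \cref{lem:elementary-monomial}(3). Your extra remark that $w_{z,y}=w_{y,z}^{-1}$, so edge orientation is irrelevant, is a correct clarification that the paper leaves implicit.
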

\begin{proof}
By spherical transitivity of $\mathcal G$, the group $\mathcal G_n = \langle S_n \rangle$ acts transitively on $L_n$.
Hence, for every $x\in L_n$, the elementary monomial group of the fiber over $x$ is a conjugate of $\mathcal M_{\eta_n}(Y_n;R)$ by an element of $\mathcal G_n$.
In particular, $\langle\mathcal E_n(R)\rangle$ contains every signed transposition $w_{y,z}$ for which $y$ and $z$ lie in the same fiber.
Let $\Lambda_n$ denote the graph with vertex set $L_n$ whose set of edges is given by
\[
\bigl\{\{\rho_ng,\theta_ng\}\mid g\in\mathcal G_n\bigr\}.
\]
From the proof of~\cref{thm: the W group is V} we know that $\Lambda_n$ is connected.
On the other hand, the conjugates of $w_n$ by the elements of $\mathcal G_n$ are precisely the signed transpositions
\[
w_{(\rho_ng,1),(\theta_ng,1)}.
\]
Together with the signed transpositions inside the fibers, these are the edge signed transpositions of a connected graph on the vertex set $Y_n$.
In this case the statement follows from~\cref{lem:elementary-monomial}.
\end{proof}

\begin{theorem}\label{thm:En-generates-EL}
The set $\mathcal E_n(R)$ generates $\EL_{Y_n}(R)$ for every $n \geq 1$.
\end{theorem}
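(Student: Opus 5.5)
The inclusion $\langle\mathcal E_n(R)\rangle\le\EL_{Y_n}(R)$ holds by construction: the type-1 matrices lie in $\EL_{Y_n}(R)$ by \cref{lem:elementary-monomial}, and the signed transpositions and elementary matrices $\varepsilon_{n,a}$ lie there by definition. For the reverse inclusion we must show that every elementary matrix $e_{y,z}(r)$ with $y\neq z$ and $r\in R$ lies in $H\defeq\langle\mathcal E_n(R)\rangle$. Note that $|Y_n|\geq 8$, so there is plenty of room for the commutator relation~\eqref{eq:elementary-commutator}.

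\textbf{Step 1: reduce to a fixed pair.} By \cref{lem:full-elementary-monomial} we have $\mathcal M_R(Y_n)\le H$. Part~(4) of \cref{lem:elementary-monomial} then shows that for every pair $(y,z)$ there is $q\in\mathcal M_R(Y_n)$ with
\[
q^{-1}e_{(\rho_n,2),(\rho_n,3)}(r)q=e_{y,z}(\pm r).
\]
Since $R$ is additively closed under negation, it suffices to show that $e_{y_0,z_0}(r)\in H$ for all $r\in R$, where $y_0=(\rho_n,2)$ and $z_0=(\rho_n,3)$. Equivalently, the set
\[
R_0\defeq\{r\in R\mid e_{y,z}(r)\in H\text{ for all distinct }y,z\in Y_n\}
\]
should be all of $R$. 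By the same conjugation argument, $R_0$ coincides with $\{r\mid e_{y_0,z_0}(r)\in H\}$ and with its negative.

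\textbf{Step 2: $R_0$ is a subring containing $A$.} By definition $A\subseteq R_0$, since $\varepsilon_{n,a}=e_{y_0,z_0}(a)\in H$ and conjugation transports this to every pair. Relation~\eqref{eq:elementary-addition} shows that $R_0$ is closed under addition, and $R_0=-R_0$. For multiplication, take $r,s\in R_0$ and pairwise distinct $y,z,v$. Then~\eqref{eq:elementary-commutator} gives $e_{y,v}(rs)=[e_{y,z}(r),e_{z,v}(s)]\in H$, and conjugating once more gives $rs\in R_0$. Since $1\in A$, the set $R_0$ is a unital subring of $R$ containing the generating set $A$. Hence $R_0=R$, because $R$ is generated as a ring by $A$. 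This is the meaning of "finitely generated ring": every element is a sum of $\pm$ products of elements of $A$, and $A=-A$ with $1\in A$ covers constants.

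\textbf{Expected obstacle.} The only delicate point is the sign bookkeeping in part~(4) of \cref{lem:elementary-monomial}. Conjugation yields $e_{y',z'}(\varepsilon r)$ with an uncontrolled sign $\varepsilon$, so we must check that membership for all pairs is genuinely symmetric under $r\mapsto -r$. This follows because $e(r)^{-1}=e(-r)$. Everything else is routine: the commutator identity needs three distinct indices, which is available since $|Y_n|\ge 3$, and this gives the statement for every $n\ge 1$.
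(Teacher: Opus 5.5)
Your proposal is correct and follows essentially the same route as the paper: you use \cref{lem:full-elementary-monomial} and part~(4) of \cref{lem:elementary-monomial} to move the matrices $\varepsilon_{n,a}$ to every pair of indices, and then use the addition and commutator relations to show that $R_0$ is a unital subring containing $A$, hence equal to $R$. Your remarks on the uncontrolled sign $\varepsilon$ and on the inclusion $\langle\mathcal E_n(R)\rangle\le\EL_{Y_n}(R)$ only spell out points the paper leaves implicit.
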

\begin{proof}
From~\cref{lem:full-elementary-monomial} we know that the group $\mathcal M_R(Y_n)$ is contained in
\[
H_n\defeq\langle\mathcal E_n(R)\rangle.
\]
We can therefore conjugate the matrices $\varepsilon_{n,a}$ by elements of $\mathcal M_R(Y_n)$ to deduce that $e_{y,z}(a) \in H_n$ for all distinct $y,z\in Y_n$ and every $a\in A$.

Let us next consider the subset
\[
R_0
\defeq
\bigl\{r\in R\mid e_{y,z}(r)\in H_n
\text{ for all distinct }y,z\in Y_n\bigr\}
\]
of $R$.
By~\eqref{eq:elementary-addition}, the set $R_0$ is an additive subgroup of $R$.
If $r,s\in R_0$ and $y,z\in Y_n$ are distinct, we can choose $x\in Y_n\setminus\{y,z\}$.
Then~\eqref{eq:elementary-commutator} tells us that
\[
e_{y,z}(rs)
=[e_{y,x}(r),e_{x,z}(s)]
\in H_n.
\]
Since moreover $A\subseteq R_0$ and $1\in A$, it follows that $R_0$ is a unital subring of $R$.
As $A$ generates $R$ as a unital ring, we have $R_0=R$, which completes the proof.
\end{proof}

\section{Growth estimates for elementary linear groups}
\label{sec:el}

In this section, we estimate the growth of $\EL_{Y_n}(R)$ with respect to the generating sets $\mathcal E_n(R)$ constructed in the preceding section.
We continue to use the notation
\[
r_n = 2^n-2
\]
introduced in~\cref{prop:BE-finite}.

\begin{lemma}\label{lem:commuting-supports-EL}
Let $n\geq2$ and let $g,h\in S_n^\ast$ be words of length less than $r_n/2$.
Then every element of $\mathcal M_{\eta_n}(Y_n;R)^h$ commutes with the $g$-conjugates $w_n^g$ and $\varepsilon_{n,a}^g$ for every $a\in A$.
\end{lemma}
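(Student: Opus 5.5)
The approach is the one used for \cref{lem:commuting-supports}: the two elements act on disjoint sets of coordinates, and block matrices on disjoint coordinate sets commute.

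\emph{Supports.} For a matrix $M\in\GL_{Y_n}(R)$, call a subset $Z\subseteq Y_n$ a support of $M$ if $M$ differs from the identity only in rows and columns indexed by $Z$. Equivalently, $M$ fixes $b_y$ for $y\notin Z$ and preserves the span of $\{b_z\mid z\in Z\}$. Two matrices with disjoint supports are block diagonal with respect to the same decomposition, with identity blocks in complementary positions, so they commute.

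Now conjugate by a type-1 permutation matrix $P_g$ associated with $g\in S_n^\ast$. This replaces a support $Z$ by $Zg$ under the right action on the first coordinate of $Y_n$. Equivalently, $P_g^{-1}MP_g$ has support $Zg$, possibly with the paper's right-action convention on indices. Consequently:
\begin{itemize}
\item every element of $\mathcal M_{\eta_n}(Y_n;R)^h$ has support $\{\eta_nh\}\times\{1,2,3,4\}$;
\item $w_n^g$ has support $\{(\rho_ng,1),(\theta_ng,1)\}$;
\item $\varepsilon_{n,a}^g$ has support $\{(\rho_ng,2),(\rho_ng,3)\}$.
\end{itemize}

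\emph{Disjointness.} It remains to show that $\eta_nh\neq\rho_ng$ and $\eta_nh\neq\theta_ng$. This is exactly the distance computation in the proof of \cref{lem:commuting-supports}. If either equality held, the triangle inequality in $\Gamma_n$ would give
\[
d_{\Gamma_n}(\rho_n,\eta_n)\le 1+|g|_{S_n}+|h|_{S_n}<1+r_n=2^n-1,
\]
using $d_{\Gamma_n}(\rho_n,\theta_n)\le 1$. This contradicts \cref{lem:separation}.

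\emph{Main point to check.} The only step needing care is that conjugating by the type-1 permutation matrices moves supports exactly as the permutation dictates. Signs cause no difficulty, because type-1 matrices are genuine permutation matrices and the type-2 group only ever acts inside its fiber. Everything else is routine.
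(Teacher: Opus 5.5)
Your proposal is correct and follows essentially the same argument as the paper: you locate the supports of the $h$-conjugated fiber group (the fiber over $\eta_nh$) and of $w_n^g$, $\varepsilon_{n,a}^g$, and then rule out $\eta_nh\in\{\rho_ng,\theta_ng\}$ with the triangle-inequality argument from \cref{lem:commuting-supports} together with \cref{lem:separation}. Your explicit definition of the support of a matrix, and your remark that conjugating by the type-1 permutation matrices moves supports in the same way for all three elements, make precise what the paper leaves implicit.
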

\begin{proof}
The conjugate of $\mathcal M_{\eta_n}(Y_n;R)$ by $h$ is supported on the four coordinates in the fiber over $\eta_nh$.
On the other hand, $w_n^g$ is supported on $(\rho_ng,1)$ and $(\theta_ng,1)$, while $\varepsilon_{n,a}^g$ is supported on $(\rho_ng,2)$ and $(\rho_ng,3)$.
In this case the same argument as in~\cref{lem:commuting-supports} shows that $\eta_nh$ is distinct from $\rho_ng$ and $\theta_ng$.
Hence the corresponding supports are disjoint and the claim follows.
\end{proof}
\begin{lemma}\label{lem:normal-form-EL}
Let $n\geq2$, and let $w$ be a word over $\mathcal E_n(R)\cup\mathcal E_n(R)^{-1}$ of length $\ell<r_n/2$.
Then the element represented by $w$ can be written as
\[
p_1\cdot p_2\cdot p_3\cdot p_4,
\]
where $p_1$ is represented by a word of the form $g_1\cdots g_\ell$ over $S_n\cup\{1\}$, and the remaining factors have the form
\begin{align*}
p_2&=
(a_1^{(2)})^{g_1\cdots g_\ell}\cdot
(a_2^{(2)})^{g_2\cdots g_\ell}\cdot
\ldots\cdot
(a_\ell^{(2)})^{g_\ell},\\
p_3&=
(a_1^{(3)})^{g_1\cdots g_\ell}\cdot
(a_2^{(3)})^{g_2\cdots g_\ell}\cdot
\ldots\cdot
(a_\ell^{(3)})^{g_\ell},\\
p_4&=
(a_1^{(4)})^{g_1\cdots g_\ell}\cdot
(a_2^{(4)})^{g_2\cdots g_\ell}\cdot
\ldots\cdot
(a_\ell^{(4)})^{g_\ell},
\end{align*}
where for each $i$ we have
\[
g_i\in S_n\cup\{1\},
\qquad
a_i^{(2)}\in\mathcal M_{\eta_n}(Y_n;R),
\qquad
a_i^{(3)}\in\{w_n,w_n^{-1},1\},
\]
and
\[
a_i^{(4)}\in\{\varepsilon_{n,a}\mid a\in A\}\cup\{1\}.
\]
\end{lemma}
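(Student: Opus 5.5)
We follow the proof of \cref{lem:normal-form-2} almost verbatim. First, each letter $t_i$ of $w$ is a generator of $\mathcal E_n(R)$ or the inverse of one, and we write it as
\[
t_i=a_i^{(2)}a_i^{(3)}a_i^{(4)}g_i,
\]
where $g_i\in S_n\cup\{1\}$, $a_i^{(2)}\in\mathcal M_{\eta_n}(Y_n;R)$, $a_i^{(3)}\in\{w_n,w_n^{-1},1\}$ and $a_i^{(4)}\in\{\varepsilon_{n,a}\mid a\in A\}\cup\{1\}$. At most one of these four factors is non-trivial. This decomposition exists for the following reasons.
\begin{itemize}
\item The elements of $S_n$ are permutation matrices of involutive permutations, hence involutions.
\item The set $\mathcal M_{\eta_n}(Y_n;R)$ is a group, so it is closed under inverses.
\item Inverting $w_n$ gives $w_n^{-1}$, which is allowed in the list.
\item By \eqref{eq:elementary-addition} we have $\varepsilon_{n,a}^{-1}=e_{(\rho_n,2),(\rho_n,3)}(-a)=\varepsilon_{n,-a}$, and $-a\in A$ because $A=-A$.
\end{itemize}
This last point is why type-4 inverses need not be listed separately.

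Next we push all $g_i$ to the left, using $xg=g\,x^{g}$ repeatedly, exactly as in \eqref{eq:rewriting-w}. This gives
\[
t_1\cdots t_\ell=g_1\cdots g_\ell\cdot\prod_{i=1}^{\ell}\bigl(a_i^{(2)}a_i^{(3)}a_i^{(4)}\bigr)^{g_i\cdots g_\ell}.
\]
We call $(a_i^{(j)})^{g_i\cdots g_\ell}$ the conjugated factors of type $j$. Each suffix $g_i\cdots g_\ell$ has $S_n$-length at most $\ell<r_n/2$.

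The only real content is the commutation of conjugated factors of distinct types.
\begin{itemize}
\item Types $2$ versus $3$ and $4$: these commute by \cref{lem:commuting-supports-EL}, applied with $h=g_i\cdots g_\ell$ and $g=g_j\cdots g_\ell$. The fact that $w_n^{-1}$ may appear in place of $w_n$ is harmless, since commuting with $w_n^g$ implies commuting with its inverse.
\item Types $3$ versus $4$: we argue via supports. Conjugation by a type-1 permutation matrix preserves the sheet index. Hence a conjugated type-3 factor is a matrix that differs from the identity only in the coordinates $(\rho_ng,1),(\theta_ng,1)$ of sheet $1$. A conjugated type-4 factor differs from the identity only in the $2\times2$ block on coordinates $(\rho_ng',2),(\rho_ng',3)$ in sheets $2$ and $3$. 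Two matrices that are identity outside disjoint coordinate blocks commute, being block diagonal with respect to a common decomposition.
\end{itemize}
The point needing a little care is making the notion of ``support'' precise for matrices over a possibly noncommutative ring. We handle this by this block-diagonal description, and the same description justifies \cref{lem:commuting-supports-EL}.

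Finally, since conjugated factors of distinct types commute, we reorder the product. We move all type-2 factors before all type-3 factors, and all type-3 factors before all type-4 factors, while preserving the internal order within each type. The resulting three products are exactly $p_2$, $p_3$ and $p_4$, and $p_1=g_1\cdots g_\ell$, which gives the claimed normal form.
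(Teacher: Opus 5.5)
Your proposal is correct and follows the paper's proof essentially step for step. It uses the same letter-by-letter decomposition (via $\varepsilon_{n,a}^{-1}=\varepsilon_{n,-a}$ and $A=-A$), pushes the $g_i$ to the left, applies \cref{lem:commuting-supports-EL} for types $2$ versus $3$ and $4$ and disjoint sheets for types $3$ versus $4$, and then regroups by type; your remarks on $w_n^{-1}$ and on the block-diagonal meaning of ``support'' over a noncommutative ring simply spell out two small points the paper leaves implicit.
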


\begin{proof}
We write $w$ as a product $t_1\ldots t_\ell$ of letters in $\mathcal E_n(R)\cup\mathcal E_n(R)^{-1}$ and write each letter in the form
\[
t_i=a_i^{(2)}a_i^{(3)}a_i^{(4)}g_i,
\]
with at most one of $a_i^{(2)},a_i^{(3)},a_i^{(4)}$ non-trivial.
This is possible because $\mathcal M_{\eta_n}(Y_n;R)$ is a group, the elements of $S_n$ are involutions, and
\[
\varepsilon_{n,a}^{-1}=\varepsilon_{n,-a}
\]
with $-a\in A$.
Pushing all $g_i$'s to the left gives
\[
t_1\cdots t_\ell
=
g_1\cdots g_\ell
\cdot
\prod_{i=1}^{\ell}
\bigl(a_i^{(2)}a_i^{(3)}a_i^{(4)}\bigr)^{g_i\cdots g_\ell}.
\]
For each $i$, the suffix $g_i\cdots g_\ell$ has $S_n$-length at most $\ell<r_n/2$.
Hence~\cref{lem:commuting-supports-EL} implies that every conjugated factor of type~2 commutes with every conjugated factor of types~3 and~4.
Conjugated factors of types~3 and~4 are supported on disjoint sheets and therefore commute as well.
We may therefore collect the factors according to their types while preserving the order within each type, which gives the claimed normal form.
\end{proof}

\begin{theorem}\label{thm:growth-estimate-EL}
For every $\alpha \in(\alpha_0,1)$ and every finitely generated associative unital ring $R$ there is a constant $C_R>0$ satisfying
\[
\gamma_{\EL_{Y_n}(R)}^{\mathcal E_n(R)}(\ell)
\leq
\exp\bigl(C_R\ell^\alpha\bigr)
\]
for every $n\geq2$ and every $0\leq\ell\leq r_n/3$.
\end{theorem}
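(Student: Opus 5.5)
We follow the proof of \cref{thm:mainestimate} almost verbatim, now using the normal form of \cref{lem:normal-form-EL}. For $\ell\le r_n/3<r_n/2$, every element of $\EL_{Y_n}(R)$ of $\mathcal E_n(R)$-length at most $\ell$ has the form $p_1p_2p_3p_4$. Writing $P_i(\ell)$ for the set of possible $i$-th factors, we get $\gamma(\ell)\le\prod_{i=1}^4|P_i(\ell)|$. We then bound each $|P_i(\ell)|$ by $\exp(C\ell^{\alpha_0}\log\ell)$ with $C$ depending only on $R$ and $A$, not on $n$. The case $\ell\le 1$ is absorbed into the constant.

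\textbf{Bounding $P_1$, $P_2$ and $P_3$.} The factor $p_1$ is the image in $\mathcal G_n$ of an element of $\mathcal G$ of length at most $\ell$, so \cref{thm:G-subexponential} gives $|P_1(\ell)|\le\exp(C_1\ell^{\alpha_0})$.

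For $p_2$, each conjugate $(a_i^{(2)})^{g_i\cdots g_\ell}$ is supported on the fiber over $\eta_n g_i\cdots g_\ell$. Conjugates over distinct fibers commute, since they have disjoint supports. Hence $p_2$ is supported on the fibers over the inverted orbit $\OO_{\eta_n}(u)$, where $u=g_1\cdots g_\ell$. Over each point, $p_2$ takes a value in a copy of $\mathcal M_R(\{1,2,3,4\})$, which has order at most $2^3\cdot 4!$ by \cref{lem:elementary-monomial}. Then \cref{prop:BE-finite} gives $|P_2(\ell)|\le\exp(C_0\ell^{\alpha_0})\cdot 192^{C_0\ell^{\alpha_0}}$.

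For $p_3$, each factor is $w_n^{\pm1}$ conjugated by a permutation matrix. It is therefore a signed transposition on the coordinates $(\rho_ng_i\cdots g_\ell,1)$ and $(\theta_ng_i\cdots g_\ell,1)$. Using the auxiliary word $\widetilde u=a_ng_1a_n\,a_ng_2a_n\cdots a_ng_\ell$ of length at most $3\ell\le r_n$, all these coordinates lie in $\OO_{\rho_n}(\widetilde u)\times\{1\}$. Thus $p_3$ is a signed monomial matrix supported on a set $O\times\{1\}$ with $|O|\le C_0(3\ell)^{\alpha_0}$. There are at most $2^{|O|}|O|!$ such matrices, and at most $\exp(C_0(3\ell)^{\alpha_0})$ possible orbits $O$. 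This yields $|P_3(\ell)|\le\exp(C_3\ell^{\alpha_0}\log\ell)$.

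\textbf{Bounding $P_4$: the main obstacle.} Here the base group is no longer $\Z^2$ at each point. The conjugated factors $\varepsilon_{n,a}^{g_i\cdots g_\ell}$ equal $e_{(x,2),(x,3)}(a)$ with $x=\rho_ng_i\cdots g_\ell$. This holds because the type-1 matrices are genuine permutation matrices, so conjugation introduces no signs. Factors over distinct points $x$ commute. Over a fixed $x$ they all lie in the abelian group $\{e_{(x,2),(x,3)}(r)\mid r\in R\}\cong(R,+)$. By \eqref{eq:elementary-addition}, the factors over $x$ therefore combine to $e_{(x,2),(x,3)}(r_x)$, where $r_x$ is a sum of at most $\ell$ elements of $A$. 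The number of such $r_x$ is at most $(|A|+1)^\ell$, which is far too crude. Instead, since $R$ may be noncommutative and the additive group is what matters, we count multisets: the number of sums of at most $\ell$ elements of $A$ is at most $\binom{\ell+|A|}{|A|}\le(\ell+1)^{|A|}$. So there are polynomially many choices per point, and
\[
|P_4(\ell)|\le\exp(C_0\ell^{\alpha_0})\cdot(\ell+1)^{|A|\,C_0\ell^{\alpha_0}}\le\exp(C_4\ell^{\alpha_0}\log\ell),
\]
where $C_4$ depends on $|A|$. This step is the only one where $R$ enters, and it is the one needing care. The commutativity of addition in $R$ is exactly what replaces the use of $\Z^2$ in the earlier arguments.

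\textbf{Conclusion.} Finally, $\ell^{\alpha_0}\log\ell\le C_\alpha\ell^\alpha$ for all $\ell\ge2$. Combining the four bounds gives a constant $C_R$, independent of $n$, with $\gamma(\ell)\le\exp(C_R\ell^\alpha)$ for all $0\le\ell\le r_n/3$.
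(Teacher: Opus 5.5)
Your proposal is correct and follows the paper's proof essentially step for step: the normal form, Bartholdi's bound for $p_1$, inverted orbits at $\eta_n$ for $p_2$, the auxiliary word $\widetilde u$ for $p_3$, and additivity of $e_{(x,2),(x,3)}(\cdot)$ over the inverted orbit at $\rho_n$ for $p_4$. The only differences are cosmetic: you count the per-point values $r_x$ via multisets, $\binom{\ell+|A|}{|A|}\le(\ell+1)^{|A|}$, where the paper uses coefficient vectors and gets $(2\ell+1)^{|A|}$; and you explicitly absorb $\ell\le 1$ into the constant, which the paper leaves implicit.
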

\begin{proof}
Fix $\alpha\in(\alpha_0,1)$.
For $i\in\{1,2,3,4\}$, let $P_i^R(\ell)$ denote the set of all possible factors $p_i$ appearing in the normal form from~\cref{lem:normal-form-EL} for words of length at most $\ell\leq r_n/3$ over $\mathcal E_n(R)\cup\mathcal E_n(R)^{-1}$.
Since $\mathcal G_n$ is a quotient of $\mathcal G$, it follows from \cref{thm:G-subexponential} that
\[
|P_1^R(\ell)|
\leq
\exp\bigl(C_1\ell^{\alpha_0}\bigr)
\]
for a constant $C_1>0$ that is independent of $n$ and $R$.
Let $C_0>0$ be a constant such that the estimates from~\cref{prop:BE-finite} hold with $C_0$ for both $\rho_n$ and $\eta_n$.
In what follows we use the notation of~\cref{lem:normal-form-EL} and set $u\defeq g_1 \ldots g_\ell$.
Each conjugate $(a_i^{(2)})^{g_i\cdots g_\ell}$ is supported on the fiber over the point $\eta_ng_i\cdots g_\ell$.
The factor $p_2$ is therefore supported on the fibers over the points of the inverted orbit $\OO_{\eta_n}(u)$.
By~\cref{prop:BE-finite}, there are at most $\exp(C_0\ell^{\alpha_0})$ inverted orbits $\OO_{\eta_n}(u)$ arising from words $u$ of length at most $\ell$, and each of them has cardinality at most $C_0\ell^{\alpha_0}$.
At every point of such an orbit, the contribution of $p_2$ lies in a conjugate of $\mathcal M_{\eta_n}(Y_n;R)$, whose order is at most $2^3\cdot4!$ by~\cref{lem:elementary-monomial}.
Thus we have
\[
|P_2^R(\ell)|
\leq
\exp(C_0\ell^{\alpha_0}) \cdot
(2^3\cdot4!)^{C_0\ell^{\alpha_0}}
\leq
\exp(C_2\ell^{\alpha_0})
\]
for every $\ell\leq r_n/3$ and a suitable constant $C_2>0$ that is independent of $n$ and $R$.
The factor $p_3$ is a signed monomial matrix supported on the first sheet, and its support is contained in
\[
\bigl\{\rho_ng_i\cdots g_\ell,\ \theta_ng_i\cdots g_\ell
\mid 1\leq i\leq\ell\bigr\}\times\{1\}.
\]
As in the proof of~\cref{lem:count-p3}, this set is contained in $\OO_{\rho_n}(\widetilde u)\times\{1\}$, where
\[
\widetilde u
\defeq
a_ng_1a_n\,a_ng_2a_n\cdots
a_ng_{\ell-1}a_n\,a_ng_\ell
\]
is a word of length at most $3\ell-1\leq r_n$ over $S_n\cup\{1\}$.
By~\cref{prop:BE-finite}, there are at most $\exp\bigl(C_0(3\ell)^{\alpha_0}\bigr)$ inverted orbits $\OO_{\rho_n}(\widetilde u)$ arising in this way, and each of them has cardinality at most $m_\ell\defeq\bigl\lceil C_0(3\ell)^{\alpha_0}\bigr\rceil$.
For a fixed such orbit $O$, the number of signed monomial matrices on the basis elements indexed by $O\times\{1\}$ is at most $2^{m_\ell}m_\ell!$.
We therefore obtain
\[
|P_3^R(\ell)|
\leq
\exp\bigl(C_0(3\ell)^{\alpha_0}\bigr)
2^{m_\ell}m_\ell!
\leq
\exp\bigl(C_3\ell^{\alpha_0}\log\ell\bigr)
\]
for every $2\leq\ell\leq r_n/3$ and a suitable constant $C_3>0$ that is independent of $n$ and $R$.
To estimate the cardinality of $P_4^R(\ell)$, we note that each of the conjugates $(a_i^{(4)})^{g_i\cdots g_\ell}$ is supported on the second and third sheet over $\rho_ng_i\cdots g_\ell$.
The factor $p_4$ is therefore supported on the fibers over the points of the inverted orbit $\OO_{\rho_n}(u)$.

At a fixed point $x$ of this orbit, the product of the contributions of the conjugates is, by~\eqref{eq:elementary-addition}, a matrix of the form $e_{(x,2),(x,3)}(c_x)$, where $c_x$ is a sum of at most $\ell$ elements of $A$.
Writing $c_x=\sum_{a\in A}m_{x,a}a$ with integers $|m_{x,a}|\leq\ell$, we see that there are at most $(2\ell+1)^{|A|}$ possible values of $c_x$.
Since, by~\cref{prop:BE-finite}, there are at most $\exp(C_0\ell^{\alpha_0})$ inverted orbits $\OO_{\rho_n}(u)$, each of cardinality at most $C_0\ell^{\alpha_0}$, we obtain
\[
|P_4^R(\ell)|
\leq
\exp(C_0\ell^{\alpha_0})
(2\ell+1)^{|A|\,C_0\ell^{\alpha_0}}
\leq
\exp\bigl(C_4(R)\ell^{\alpha_0}\log\ell\bigr)
\]
for every $2\leq\ell\leq r_n/3$ and a suitable constant $C_4(R)>0$ that is independent of $n$.
Since
\[
\ell^{\alpha_0} \log(\ell)
\leq C_{\alpha}\ell^{\alpha}
\]
for every $\ell \geq 2$ and a suitable constant $C_{\alpha}>0$, the preceding estimates yield a constant $C_R>0$ with
\[
\gamma_{\EL_{Y_n}(R)}^{\mathcal E_n(R)}(\ell)
\leq
\prod_{i=1}^4|P_i^R(\ell)|
\leq
\exp\bigl(C_R\ell^{\alpha}\bigr)
\]
for every $n\geq2$ and every $2\leq\ell\leq r_n/3$.
\end{proof}

We are now ready to prove the estimates for the infimal growth rates of elementary linear groups stated in the introduction.

\begin{proof}[Proof of~\cref{thm:el}]
To prove the theorem we fix $\beta\in(0,1-\alpha_0)$ and $\alpha\in(\alpha_0,1-\beta)$.
Let $C_R>0$ be the constant provided by~\cref{thm:growth-estimate-EL} for this choice of $\alpha$.
For every $n$ we write
\[
\ell_n\defeq\left\lfloor\frac{r_n}{3}\right\rfloor.
\]
For all sufficiently large $n$, the estimate from~\cref{thm:growth-estimate-EL} gives us
\begin{align*}
\omega\bigl(\EL_{2^{n+2}}(R)\bigr)
&\leq
\omega\bigl(\EL_{Y_n}(R),\mathcal E_n(R)\bigr)\\
&\leq
\gamma_{\EL_{Y_n}(R)}^{\mathcal E_n(R)}(\ell_n)^{1/\ell_n}\\
&\leq
\exp\bigl(C_R\ell_n^{\alpha-1}\bigr).
\end{align*}
Since $\ell_n\geq2^{n-2}$ for all sufficiently large $n$ we have
\[
C_R\ell_n^{\alpha-1}
\leq
C_R\,2^{-(1-\alpha)(n-2)}.
\]
As $\beta<1-\alpha$ by the choice of $\alpha$, the right-hand side is at most $2^{-\beta(n+2)}$ for all sufficiently large $n$.
Consequently,
\[
\omega\bigl(\EL_{2^{n+2}}(R)\bigr)
\leq
\exp\bigl(2^{-\beta(n+2)}\bigr)
\]
for all sufficiently large $n$, which completes the proof of~\cref{thm:el}.
\end{proof}

\clearpage
\appendix
\section{Notation}
\label{app:notation}

The tables collect notation used repeatedly in the paper. \medskip

\begingroup
\small
\setlength{\tabcolsep}{4pt}
\renewcommand{\arraystretch}{1.18}

\begin{center}\textbf{Growth and inverted orbits}\end{center}\smallskip

\begin{tabular}{@{}p{0.2\textwidth}p{0.5\textwidth}p{0.3\textwidth}@{}}
\textbf{Symbol} & \textbf{Meaning} & \textbf{Where} \\\hline
$\gamma_G^S(n)$ & Number of elements in the word ball of radius $n$ for the finite generating set $S$. & Section~1 \\[3pt]
$\alpha_0$, $\eta$ & $\alpha_0=\log(2)/\log(2/\eta)$, where $\eta$ is the real root of $x^3+x^2+x-2$. & \cref{not:alpha0} \\[3pt]
$\OO_\omega(w)$, $\delta_\omega(w)$ & Inverted orbit of the word $w$ at $\omega$, and its cardinality. & \cref{def:inverted-orbit} \\[3pt]
$\Delta_\omega^A(n)$ & Largest inverted orbit cardinality for words over $A$ of length at most $n$. & \cref{def:inverted-orbit} \\[3pt]
$N_\omega^A(n)$ & Number of distinct inverted orbits of words over $A$ of length at most $n$. & \cref{def:inverted-orbit} \\[3pt]
\end{tabular}
\medskip

\begin{center}\textbf{The binary tree and Schreier graphs}\end{center}\smallskip

\begin{tabular}{@{}p{0.2\textwidth}p{0.5\textwidth}p{0.3\textwidth}@{}}
\textbf{Symbol} & \textbf{Meaning} & \textbf{Where} \\\hline
$\T$, $L_n$ & Binary rooted tree; its $n$-th level $L_n=\{0,1\}^n$. & \cref{sec:Grigorchuk-background} \\[3pt]
$\mathcal G$, $S$ & First Grigorchuk group and its generating set $S=\{a,b,c,d\}$. & \cref{sec:Grigorchuk-background} \\[3pt]
$\mathcal G_n$, $S_n$ & Image of $\mathcal G$ on $L_n$ and the generators $\{a_n,b_n,c_n,d_n\}$. & \eqref{eq:grigorchuk-quotient}; \cref{sec:inverted-orbits-cantor} \\[3pt]
$\Gamma_n$, $\Gamma_\infty$ & Schreier graphs of the action on $L_n$ and on the orbit of $\rho=1^\infty$, respectively. & \cref{sec:Grigorchuk-background} \\[3pt]
$\rho_n$, $\eta_n$, $\rho$ & Distinguished vertices $1^n$, $1^{n-1}0$, and boundary point $1^\infty$. & \eqref{eq:distinguished-vertices}; \cref{sec:Grigorchuk-background}  \\[3pt]
$\theta_n$ & Vertex $\rho_na_n=0\,1^{n-1}$. & \cref{sec:inverted-orbits-cantor} \\[3pt]
\end{tabular}
\medskip

\begin{center}\textbf{Cantor spaces and Thompson groups}\end{center}\smallskip

\begin{tabular}{@{}p{0.2\textwidth}p{0.5\textwidth}p{0.3\textwidth}@{}}
\textbf{Symbol} & \textbf{Meaning} & \textbf{Where} \\\hline
$\mathfrak C$, $w\mathfrak C$ & Cantor set $\{0,1\}^{\N}$ and the cylinder with finite prefix $w$. & \cref{sec:V-background}; \eqref{eq:cylinder} \\[3pt]
$V_Z$ & Higman--Thompson group on the copies indexed by $Z$. & \cref{sec:V-background} \\[3pt]
$\mathfrak C_Z$, $\mathfrak C_z$ & Disjoint union of Cantor copies indexed by $Z$, and its $z$-th copy. & \eqref{eq:cantor-copies}; \cref{sec:V-background}  \\[3pt]
$w\gamma$, $wH$ & Copy of $\gamma$, or of the subgroup $H\leq V$, acting inside $w\mathfrak C$. & \cref{not:local-V} \\[3pt]
$\RiSt_G(U)$ & Subgroup of $G$ fixing the complement of $U$ pointwise. & \cref{not:rigid-stabilizer} \\[3pt]
$X_i$ &  Standard generators of Thompson's group $F\leq V$, with $X_i=1^iX_0$ for $i\geq1$. & \eqref{eq:thompson-X0}; \cref{sec:V-background}  \\[3pt]
$Y_n$, $\pr_n$ & $Y_n=L_n\times\{1,2,3,4\}$ and the projection $Y_n\to L_n$. & \cref{sec:inverted-orbits-cantor}
\end{tabular}

\bibliographystyle{alpha}
\bibliography{literature.bib}
\end{document}